\newcommand{\arxiv}[1]{\href{http://arxiv.org/abs/#1}{\tt arXiv:\nolinkurl{#1}}}
\newcommand{\arXiv}[1]{\href{http://arxiv.org/abs/#1}{\tt arXiv:\nolinkurl{#1}}}
\newcommand{\googlebooks}[1]{(preview at \href{http://books.google.com/books?id=#1}{google books})}
\definecolor{dark-red}{rgb}{0.7,0.25,0.25}
\definecolor{dark-blue}{rgb}{0.15,0.15,0.55}
\definecolor{medium-blue}{rgb}{0,0,.8}
\definecolor{DarkGreen}{RGB}{0,150,0}
\definecolor{rho}{named}{red}
\theoremstyle{plain}
\newtheorem{thm}{Theorem}[section]
\newtheorem*{thm*}{Theorem}
\newtheorem{thmalpha}{Theorem}
\newtheorem*{cor*}{Corollary}
\newtheorem*{conj*}{Conjecture}
\newtheorem{lem}[thm]{Lemma}
\newtheorem{prop}[thm]{Proposition}
\newtheorem*{quest*}{Question}
\newtheorem*{claim*}{Claim}
\theoremstyle{definition}
\newtheorem{defn}[thm]{Definition}
\newtheorem{nota}[thm]{Notation}
\newtheorem{ex}[thm]{Example}
\newtheorem{prob}[thm]{Problem}
\newtheorem{fact}[thm]{Fact}
\newtheorem{sub-ex}[thm]{Sub-Example}
\newtheorem{rem}[thm]{Remark}
\newtheorem*{rem*}{Remark}
\DeclareMathOperator{\Aut}{Aut}
\DeclareMathOperator{\coev}{coev}
\DeclareMathOperator{\diag}{diag}
\DeclareMathOperator{\End}{End}
\DeclareMathOperator{\ev}{ev}
\DeclareMathOperator{\loc}{loc}
\DeclareMathOperator{\Hom}{Hom}
\DeclareMathOperator{\Out}{Out}
\DeclareMathOperator{\id}{id}
\DeclareMathOperator{\Inv}{Inv}
\DeclareMathOperator{\Ind}{Ind}
\DeclareMathOperator{\Irr}{Irr}
\DeclareMathOperator{\Spin}{Spin}
\DeclareMathOperator{\SU}{SU}
\DeclareMathOperator{\Stab}{Stab}
\DeclareMathOperator{\triv}{triv}
\DeclareMathOperator{\res}{res}
\DeclareMathOperator{\Res}{Res}
\newcommand{\comment}[1]{}
\newcommand{\be}{\begin{enumerate}[label=(\arabic*)]}
\newcommand{\ee}{\end{enumerate}}
\newcommand{\set}[2]{\left\{#1 ~\middle|~ #2\right\}}
\renewcommand{\loc}{{\rm{loc}}}
\def\semicolon{;}
\def\applytolist#1{
    \expandafter\def\csname multi#1\endcsname##1{
        \def\multiack{##1}\ifx\multiack\semicolon
            \def\next{\relax}
        \else
            \csname #1\endcsname{##1}
            \def\next{\csname multi#1\endcsname}
        \fi
        \next}
    \csname multi#1\endcsname}
\def\calc#1{\expandafter\def\csname c#1\endcsname{{\mathcal #1}}}
\def\bbc#1{\expandafter\def\csname bb#1\endcsname{{\mathbb #1}}}
\def\bfc#1{\expandafter\def\csname bf#1\endcsname{{\mathbf #1}}}
\def\sfc#1{\expandafter\def\csname s#1\endcsname{{\sf #1}}}
\def\fc#1{\expandafter\def\csname f#1\endcsname{{\mathfrak #1}}}
\newcommand{\Rep}{{\sf Rep}}
\renewcommand{\Vec}{{\sf Vec}}
\newcommand{\fdVec}{{\sf Vec_{fd}}}
\newcommand{\EqBr}{{\sf Aut}_{\otimes}^{\sf br}}
\newcommand{\uEqBr}{\underline{\sf Aut}_{\otimes}^{\sf br}}
\newcommand{\uEq}{\underline{\sf Aut}}
\newcommand{\Eq}{{\sf Aut}}
\newcommand{\uEqTens}{\underline{\sf Aut}_\otimes}
\newcommand{\EqTens}{{\sf Aut}_\otimes}
\newcommand{\sfG}{{\sf G}}
\newcommand{\usfG}{\underline{\sf G}}
\newcommand{\uG}{\underline{G}}
\renewcommand{\Stab}{{\sf Stab}}
\newcommand{\EqStab}{\Stab_\otimes}
\newcommand{\uEqStab}{\underline{\Stab}_\otimes}
\newcommand{\EqBrStab}{{\Stab_\otimes^{\sf br}}}
\newcommand{\uEqBrStab}{\underline{\Stab}_\otimes^{\sf br}}
\newcommand{\noshow}[1]{}
\newcommand{\MR}[1]{}
\tikzset{vertex/.style = {shape=circle,draw,fill=black,inner sep=0pt,minimum size=5pt}}
\tikzset{edge/.style = {->,> = latex', bend right}}
\tikzset{
	super thick/.style={line width=3pt}
}
\tikzset{
    quadruple/.style args={[#1] in [#2] in [#3] in [#4]}{
        #1,preaction={preaction={preaction={draw,#4},draw,#3}, draw,#2}
    }
}
\tikzstyle{shaded}=[fill=red!10!blue!20!gray!30!white]
\tikzstyle{unshaded}=[fill=white]
\tikzstyle{empty box}=[circle, draw, thick, fill=white, opaque, inner sep=2mm]
\tikzstyle{annular}=[scale=.7, inner sep=1mm, baseline]
\tikzstyle{rectangular}=[scale=.75, inner sep=1mm, baseline=-.1cm]
\tikzstyle{mid>}=[decoration={markings, mark=at position 0.5 with {\arrow{>}}}, postaction={decorate}]
\tikzstyle{mid<}=[decoration={markings, mark=at position 0.5 with {\arrow{<}}}, postaction={decorate}]
\tikzstyle{over}=[double, draw=white, super thick, double=]
\newcommand{\roundNbox}[6]{
	\draw[rounded corners=5pt, very thick, #1] ($#2+(-#3,-#3)+(-#4,0)$) rectangle ($#2+(#3,#3)+(#5,0)$);
	\coordinate (ZZa) at ($#2+(-#4,0)$);
	\coordinate (ZZb) at ($#2+(#5,0)$);
	\node at ($1/2*(ZZa)+1/2*(ZZb)$) {#6};
}
\begin{document}
\title{Spontaneous symmetry breaking from anyon condensation}
\author{Marcel Bischoff, 
Corey Jones, 
Yuan-Ming Lu, 
and 
David Penneys
}
\date{}
\maketitle

\begin{abstract}
In a physical system undergoing a continuous quantum phase transition, spontaneous symmetry breaking occurs when certain symmetries of the Hamiltonian fail to be preserved in the ground state.
In the traditional Landau theory, a symmetry group can break down to any subgroup. 
However, this no longer holds across a continuous phase transition driven by anyon condensation in symmetry enriched topological orders (SETOs). 
For a SETO described by a $G$-crossed braided extension $\mathcal{C}\subseteq \mathcal{C}^{\times}_{G}$, we show that physical considerations require that a connected \'etale algebra $A\in \cC$ admit a $G$-equivariant algebra structure for symmetry to be preserved under condensation of $A$. 
Given any categorical action $\underline{G}\rightarrow \underline{\sf Aut}_{\otimes}^{\sf br}(\cC)$ such that $g(A)\cong A$ for all $g\in G$, we show there is a short exact sequence whose splittings correspond to $G$-equivariant algebra structures.
The non-splitting of this sequence forces spontaneous symmetry breaking under condensation of $A$. 
\textit{}Furthermore, we show that if symmetry is preserved, there is a canonically associated SETO of $\mathcal{C}^{\operatorname{loc}}_{A}$, and gauging this symmetry commutes with anyon condensation.
%This is the submitted version of \arxiv{???}.
\end{abstract}

\setcounter{tocdepth}{2}
\tableofcontents

%%%%%%%%%%%%%%%%%%%%%%%%%%%%%%%%%%%%%%%%%%%%%%%%%%%%%%%%%%%%%%%
%%%%%%%%%%%%%%%%%%%%%%%%%%%%%%%%%%%%%%%%%%%%%%%%%%%%%%%%%%%%%%%
%%%%%%%%%%%%%%%%%%%%%%%%%%%%%%%%%%%%%%%%%%%%%%%%%%%%%%%%%%%%%%%
\section{Introduction}

A cornerstone of condensed matter physics is the concept of spontaneous symmetry breaking exemplified by Landau's theory of symmetry breaking phase transitions \cite{Landau1937b,Landau1937c}. 
It was long believed that all phases of matter are classified by their symmetries until the discovery of topological orders \cite{Wen1989a,Wen2004B} in fractional quantum Hall fluids \cite{Tsui1982}. 
The presence of anyons \cite{Wilczek1990B}, i.e., particles obeying neither Bose nor Fermi statistics in two spatial dimensions, gives rise to an extremely rich structure beyond the Landau paradigm. 
In particular, the interplay between symmetries and topological orders leads to a large class of new phases of matter called \emph{symmetry enriched topological orders} (SETOs) \cite{Wen2002,Chen2013,Essin2013,Mesaros2013,Hung2013,Hung2014,1410.4540,MR3367540,Lu2016,Tarantino2016,Chiu2016,Chen2017,MR3613518,Lan2017a}.

According to Landau theory, in a physical system whose Hamiltonian preserves a symmetry group $G$, the symmetry of its ground state can spontaneously break down to any subgroup $H\subset G$. Within Landau theory, continuous quantum phase transitions happen between one ground state whose unbroken symmetry is a subgroup of the other, and the associated critical phenomena is described by the Ginzburg-Landau theory. It is natural to wonder how this picture is altered by the interplay of symmetry and topology in SETOs. The development of a theory for continuous phase transitions involving anyons and topological orders beyond Landau's paradigm raises a fundamental question for modern condensed matter physics. 

In this work, we make a first step to resolve this issue, focusing on quantum phase transitions between different gapped SETOs driven by anyon condensation \cite{Bais2009,MR3246855,Neupert2016,Neupert2016a,Burnell2018}. 
We develop a categorical framework for spontaneous symmetry breaking driven by condensing anyons in topological orders. 
This perspective reveals a set of necessary conditions for a symmetry to be preserved in continuous quantum phase transitions between two different SETOs. 
Given an arbitrary SETO, our categorical framework allows us to classify all possible gapped phases connected to this SETO by a continous quantum phase transition driven by the condensation of anyons, revealing the structure of a phase diagram involving all SETOs. 

We now recall the mathematical descriptions of anyon condensation and SETOs. The topological order of a gapped system is described by a unitary modular tensor category (UMTC) $\cC$. 
The simple objects of the category describe the superselection sectors (anyons) of quasi-particle excitations, while the data of the category describes the fusion and braiding operators of the anyons. 
If the system has a global on-site $G$-symmetry, then there is a natural SETO associated to the system, described by a \emph{G-crossed braided extension} \cite{1410.4540} (see also \cite[\S8.24]{MR3242743}). 
In particular, we have a categorical action $\uG\rightarrow \uEqBr(\cC)$, where the later denotes the categorical group of braided autoequivalences of $\cC$.
Condensible anyons are described by connected \'etale algebra objects $A\in \cC$ \cite{MR3246855}. 
On the mathematical level, anyon condensation is described as passage from the category $\cC$ to the category $\cC^{\operatorname{loc}}_{A}$ of local $A$ modules, which is again a UMTC. The category $\cC^{\operatorname{loc}}_{A}$ describes the topological order of the phase post transition.

A symmetry is spontaneously broken across a phase transition if it fails to be preserved in the ground state post transition. 
In particular, if a symmetry is unbroken (\emph{preserved}), then it should also act consistently on the system after the phase transition. In general, non-universal properties of the microscopic Hamiltonian (i.e.\ energetics) can cause a symmetry to be broken that are beyond the description of category theory, e.g., the symmetry breaking that occurs in Landau theory.
Thus from the categorical point of view, one can only determine \emph{sufficient} conditions for symmetry breaking which are universal and independent of the specific Hamiltonian. 
We refer to symmetry broken for such categorical reasons as \textit{categorically broken symmetry}.
From this point of view, it is natural to consider categorically \emph{necessary} conditions for symmetry to be preserved under a phase transition driven by anyon condensation.

We analyze this problem mathematically in the more general setup of a non-degenerate braided fusion category $\cC$, where we require neither unitarity nor a spherical structure, together with a connected \'{e}tale (commutative and separable) algebra $A\in \cC$.
We prove there are two \emph{obstructions} for a categorical symmetry $\uG \to \uEqBr(\cC)$ to be preserved under anyon condensation:
\begin{enumerate}[(1)]
\item 
For all $g\in G$, $g(A)\cong A$ \emph{as algebras}, and
\item
There is a $G$-equivariant algebra structure $\lambda = \{\lambda^g: g(A) \to A\}_{g\in G}$ on $A$.
\end{enumerate}
Violating either condition will necessarily break the symmetry group $G$ after the condensation of $A$. 
In particular, as we will show later, the second condition can be concisely expressed as the existence of a splitting of a short exact sequence \eqref{eq:SESintro} below, where inequivalent splittings correspond to different $G$-symmetric phases obtained by condensing $A$.

We now describe the mathematical and physical reasoning that leads to these obstructions.

%and a connected \'{e}tale algebra $A\in \cC$ with multiplication $m:A\otimes A \to A$.

Suppose %$\cC$ is a non-degenerate braided fusion category and 
$\cF$ is a $G$-crossed braided extension of $\cC$, with associated categorical action $\uG \to \uEqBr(\cC)$.
When we condense our \'{e}tale algebra $A\in \cC$, this algebra describes the vacuum sector of the new phase.
A categorically necessary condition to preserve symmetry is that the symmetry must be preserved in the vacuum.
For $g\in \uEqBr(\cC)$, to act by a monoidal transformation on the new phase, we must have an algebra isomorphism $\lambda^g:g(A)\to A$, and this algebra isomorphism gives us an isomorphism for each $g$-graded defect $X\in \cF_g$:
%Physically, this is summarized by the following process:
%We assume that the first level of symmetry is preserved, so that for every $g\in G$, we have an algebra isomorphism $\lambda^g:g(A) \to A$.
%Moreover, 
%This means that for every $g\in G$, we have an algebra isomorphism $\lambda^g:g(A) \to A$ such that 
%for any $g$-graded defect $X\in \cF_g$, our algebra isomorphism gives us an isomorphism
$$
\begin{tikzpicture}[baseline=-.1cm]
  \draw (-.3,.6) -- (-.3,1);
  \draw (.3,-1) -- (.3,-.7) .. controls ++(90:.4cm) and ++(270:.4cm) .. (-.3,0);
  \draw[super thick, white] (-.3,-1) -- (-.3,-.7) .. controls ++(90:.4cm) and ++(270:.4cm) .. (.3,0) -- (.3,1);
  \draw (-.3,-1) -- (-.3,-.7) .. controls ++(90:.4cm) and ++(270:.4cm) .. (.3,0) -- (.3,1);
  \roundNbox{}{(-.3,.3)}{.3}{0}{0}{$\lambda^g$}
  \node at (-.5,-.8) {\scriptsize{$X$}};
  \node at (.5,.8) {\scriptsize{$X$}};
  \node at (.5,-.8) {\scriptsize{$A$}};
  \node at (-.7,-.2) {\scriptsize{$g(A)$}};
  \node at (-.5,.8) {\scriptsize{$A$}};
\end{tikzpicture}
=
(\lambda^g \otimes \id_X)
\circ
\beta_{X,A}
:
X\otimes A \to A\otimes X.
$$
The multiplication $m$ of $A$ should be compatible with these isomorphisms
$$
\begin{tikzpicture}[baseline=-.1cm]
  \draw (-.3,.6) -- (-.3,1);
  \draw (.3,-.7) .. controls ++(90:.4cm) and ++(270:.4cm) .. (-.3,0);
  \draw (0,-1) arc (180:0:.3cm);
  \filldraw (.3,-.7) circle (.05cm);
  \draw[super thick, white] (-.3,-1) -- (-.3,-.7) .. controls ++(90:.4cm) and ++(270:.4cm) .. (.3,0) -- (.3,1);
  \draw (-.3,-1) -- (-.3,-.7) .. controls ++(90:.4cm) and ++(270:.4cm) .. (.3,0) -- (.3,1);
  \roundNbox{}{(-.3,.3)}{.3}{0}{0}{$\lambda^g$}
  \node at (-.3,-1.2) {\scriptsize{$X$}};
  \node at (.3,1.2) {\scriptsize{$X$}};
  \node at (0,-1.2) {\scriptsize{$A$}};
  \node at (.6,-1.2) {\scriptsize{$A$}};
  \node at (.3,-.85) {\scriptsize{$m$}};
  \node at (-.7,-.2) {\scriptsize{$g(A)$}};
  \node at (-.3,1.2) {\scriptsize{$A$}};
\end{tikzpicture}
=
\begin{tikzpicture}[baseline=-.1cm]
  \draw (-.3,.8) -- (-.3,1);
  \draw (.6,-1) .. controls ++(90:.4cm) and ++(270:.4cm) .. (0,0);
  \draw (0,-1) .. controls ++(90:.4cm) and ++(270:.4cm) .. (-.6,0);
  \draw (-.6,.5) arc (180:0:.3cm);
  \filldraw (-.3,.8) circle (.05cm);
  \draw[super thick, white] (-.6,-1) .. controls ++(90:.6cm) and ++(270:.6cm) .. (.6,0) -- (.6,1);
  \draw (-.6,-1) .. controls ++(90:.6cm) and ++(270:.6cm) .. (.6,0) -- (.6,1);
  \roundNbox{}{(0,.25)}{.25}{0}{0}{$\lambda^g$}
  \roundNbox{}{(-.6,.25)}{.25}{0}{0}{$\lambda^g$}
  \node at (-.6,-1.2) {\scriptsize{$X$}};
  \node at (.6,1.2) {\scriptsize{$X$}};
  \node at (0,-1.2) {\scriptsize{$A$}};
  \node at (.6,-1.2) {\scriptsize{$A$}};
  \node at (-.3,.65) {\scriptsize{$m$}};
  \node at (-.9,-.2) {\scriptsize{$g(A)$}};
%  \node at (.25,-.15) {\tiny{$g(A)$}};
  \node at (-.3,1.2) {\scriptsize{$A$}};
\end{tikzpicture}
\qquad\qquad
\forall X\in \cF_G,
$$
and our isomorphisms $\lambda$ should be \emph{coherent} with the $G$-crossed braided extension $\cF$.
Thus for a $g$-graded defect $X\in \cF_g$ and an $h$-graded defect $y\in \cF_h$, we have
$$
\begin{tikzpicture}[baseline=.7cm]
  \draw (-1.2,2.2) -- (-1.2,2.6);
  \draw (-.3,.6) -- (-.3,.7) .. controls ++(90:.4cm) and ++(270:.4cm) .. (-1.2,1.6);
  \draw (.3,-1) -- (.3,-.7) .. controls ++(90:.4cm) and ++(270:.4cm) .. (-.3,0);
  \draw[super thick, white] (-.3,-1) -- (-.3,-.7) .. controls ++(90:.4cm) and ++(270:.4cm) .. (.3,0) -- (.3,2);
  \draw (-.3,-1) -- (-.3,-.7) .. controls ++(90:.4cm) and ++(270:.4cm) .. (.3,0) -- (.3,2.6);
  \draw[super thick, white] (-1.2,-1) -- (-1.2,.7) .. controls ++(90:.4cm) and ++(270:.4cm) .. (-.3,1.6) -- (-.3,2.6);
  \draw (-1.2,-1) -- (-1.2,.7) .. controls ++(90:.4cm) and ++(270:.4cm) .. (-.3,1.6) -- (-.3,2.6);
  \roundNbox{}{(-.3,.3)}{.3}{0}{0}{$\lambda^h$}
  \roundNbox{}{(-1.2,1.9)}{.3}{0}{0}{$\lambda^g$}
  \node at (-1.4,-.8) {\scriptsize{$X$}};
  \node at (-.5,-.8) {\scriptsize{$Y$}};
  \node at (.5,-.8) {\scriptsize{$A$}};
  \node at (-.7,-.2) {\scriptsize{$h(A)$}};
  \node at (-.1,.8) {\scriptsize{$A$}};
  \node at (-1.6,1.4) {\scriptsize{$g(A)$}};
  \node at (-1.4,2.4) {\scriptsize{$A$}};
  \node at (-.5,2.4) {\scriptsize{$X$}};
  \node at (.5,2.4) {\scriptsize{$Y$}};
\end{tikzpicture}
=
\begin{tikzpicture}[baseline=-.1cm]
  \draw (-.3,.6) -- (-.3,1);
  \draw (.3,-1) -- (.3,-.7) .. controls ++(90:.4cm) and ++(270:.4cm) .. (-.3,0);
  \draw[super thick, white] (-.3,-1) -- (-.3,-.7) .. controls ++(90:.4cm) and ++(270:.4cm) .. (.3,0) -- (.3,1);
  \draw (-.3,-1) -- (-.3,-.7) .. controls ++(90:.4cm) and ++(270:.4cm) .. (.3,0) -- (.3,1);
  \roundNbox{}{(-.3,.3)}{.3}{.1}{.1}{$\lambda^{gh}$}
  \node at (-.8,-.8) {\scriptsize{$X\otimes Y$}};
  \node at (.8,.8) {\scriptsize{$X\otimes Y$}};
  \node at (.5,-.8) {\scriptsize{$A$}};
  \node at (-.8,-.2) {\scriptsize{$gh(A)$}};
  \node at (-.5,.8) {\scriptsize{$A$}};
\end{tikzpicture}
\qquad\qquad
\forall X\in \cF_g, Y\in \cF_h.
$$
Under the coherence conditions for $G$-crossed braided fuison categories, this is exactly the condition that the following diagram commutes for all $g,h\in G$:
$$
\begin{tikzcd}
g(h(A)) \arrow{r}{g(\lambda^{h})} \arrow{d}[swap]{\mu^{A}_{g,h}} 
& 
g(A) \arrow{d}{\lambda^{g}} 
\\
gh(A) \arrow{r}{\lambda^{gh}} 
& 
A.
\end{tikzcd}
$$
That is, $\{\lambda^g : g(A) \to A\}_{g\in G}$ endows $A$ with the structure of a $G$-\emph{equivariant algebra object} in the \emph{equivariantization} $\cC^G$ (see Definition \ref{defn:Equivariantization} below).

\begin{defn}
Let $\underline{G}\rightarrow \uEqBr(\cC)$ be a categorical action and $A\in \cC$ a connected \'etale algebra. 
We say that \textit{symmetry is categorically preserved} if there exists a $G$-equivariant algebra structure on $A$. 
We say that \textit{symmetry is categorically broken} if no such structure exists.
\end{defn}

If symmetry is categorically broken, then it forces the symmetry to break across the anyon-condensation transition in any microscopic realization of this SETO. 
We now describe some mathematical consequences of symmetry being categorically preserved, which justify our definition.

Define the categorical groups
\begin{align*}
\uEqBr(\cC|A)&:= \set{(\alpha, \lambda)}{\alpha \in \uEqBr(\cC)\text{ and }\lambda: \alpha(A) \to A\text{ is an algebra isomorphism}}
\\
\uEqBrStab(A) &:=\set{\alpha\in \uEqBr(\cC)}{\alpha(A) \cong A \text{ as algebras}}
\end{align*}
While the objects of these two categorical groups look similar, and involve braided autoequivalences $\alpha$ which preserve $A$ as an algebra, there is a subtle difference.
To describe an object in $\uEqBr(\cC|A)$ we must choose a \emph{specific} algebra isomorphism $\lambda: \alpha(A) \to A$, while to describe an object in $\uEqBrStab(A)$, we merely require the existence of such a $\lambda$.
The morphisms between such pairs are monoidal natural transformations which  
must be compatible with these specific choices of algebra isomorphisms.
We refer the reader to Definition \ref{defn:ExamplesOfCategoricalGroups} below for the preceise definitions of these morphisms.
However, observe that there is an obvious forgetful monoidal functor $\underline{F}:\uEqBr(\cC|A) \to \uEqBrStab(A) \subset \uEqBr(\cC)$ which forgets the specific choice of $\lambda$.

\begin{thmalpha}
\label{thmalpha:LiftsAndGEquivariantStructure}
There is a canonical strong monoidal functor $\underline{F}_A:\uEqBr(\cC|A) \to \uEqBr(\cC_A^\loc)$.
Given any categorical action $(\underline{\rho},\mu):\uG \to \uEqBr(\cC)$ whose image lies in $\uEqBrStab(A)$, lifts to $\uEqBr(\cC|A)$ are in bijective correspondence with $G$-equivariant algebra structures
$\lambda = \{\lambda^g : g(A) \to A\}_{g\in G}$.
Thus a $G$-equivariant algebra structure induces a canonical categorical action $\uG \to \uEqBr(\cC_A^\loc)$.
\end{thmalpha}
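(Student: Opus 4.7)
The plan is to establish the three assertions of the theorem in sequence.

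\emph{Step 1 (construction of $\underline{F}_A$).} Given $(\alpha,\lambda)\in \uEqBr(\cC|A)$ and a local $A$-module $(M,m_M)\in \cC_A^\loc$, I would define $\underline{F}_A(\alpha,\lambda)(M):=\alpha(M)$ with transported $A$-action $\alpha(m_M)\circ(\lambda^{-1}\otimes \id_{\alpha(M)})$; this is a valid $A$-module since $\lambda:\alpha(A)\to A$ is an algebra isomorphism and $\alpha$ is strong monoidal, and it is local because $\alpha$ is \emph{braided}, so the half-braiding used in the locality axiom transports correctly. On a morphism $\eta:(\alpha,\lambda)\to(\alpha',\lambda')$ (a monoidal natural isomorphism satisfying $\lambda'\circ\eta_A=\lambda$), the components $\eta_M$ are automatically $A$-linear for the transported actions. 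The strong monoidal structure of $\underline{F}_A(\alpha,\lambda)$ with respect to $\otimes_A$ descends from the tensorator of $\alpha$, since an equivalence preserves the coequalizer defining $\otimes_A$ and $\lambda$ intertwines the relevant $A$-actions. The monoidal structure of $\underline{F}_A$ itself is inherited from composition in $\uEqBr(\cC|A)$.

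\emph{Step 2 (lifts as equivariant structures).} A lift of $(\underline\rho,\mu):\uG\to\uEqBr(\cC)$ along the forgetful functor $\underline{F}:\uEqBr(\cC|A)\to\uEqBrStab(A)$ amounts, on objects, to choosing for each $g\in G$ an algebra isomorphism $\lambda^g:g(A)\to A$; being monoidal further demands that the tensorator $\mu_{g,h}$ be a morphism in $\uEqBr(\cC|A)$. Unpacking the compatibility for such morphisms (Definition~\ref{defn:ExamplesOfCategoricalGroups}), this is precisely the commutativity of
\[
\begin{tikzcd}
g(h(A)) \arrow{r}{g(\lambda^{h})} \arrow{d}[swap]{\mu^{A}_{g,h}} & g(A) \arrow{d}{\lambda^{g}} \\
gh(A) \arrow{r}{\lambda^{gh}} & A
\end{tikzcd}
\]
for all $g,h\in G$, which is the defining axiom of a $G$-equivariant algebra structure on $A$. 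The final assertion of the theorem then follows by composing such a lift with the functor $\underline{F}_A$ built in Step~1.

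\emph{Main obstacle.} I expect the bulk of the work to lie in Step~1: verifying that $\underline{F}_A(\alpha,\lambda)$ is a strong monoidal, braided functor into $\cC_A^\loc$ requires showing that the comparison $\alpha(M)\otimes_A\alpha(N)\xrightarrow{\sim}\alpha(M\otimes_A N)$ induced by the tensorator of $\alpha$ is a well-defined isomorphism of local $A$-modules compatible with both the associator and the braiding on $\cC_A^\loc$. This reduces to coherence diagrams that must be checked using the universal property of the relative tensor product, the hexagon axiom for $\alpha$, and the half-braidings defining locality. Once these are established, Steps~2 and~3 become essentially a matter of unwinding definitions.
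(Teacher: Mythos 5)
Your proposal is correct and follows essentially the same route as the paper: construct the transported $A$-module structure on $\alpha(M)$ using $\lambda^{-1}$ and the tensorator, verify locality using that $\alpha$ is braided, verify compatibility with $\otimes_A$ via the separability idempotent, and then observe that monoidal lifts along the forgetful functor are exactly $G$-equivariant algebra structures. The only small omission is that your transported action $\alpha(m_M)\circ(\lambda^{-1}\otimes\id_{\alpha(M)})$ should also insert the tensorator $\psi^\alpha$ of $\alpha$ before applying $\alpha(m_M)$, as in Definition~\ref{defn:SecondLevelCanonicalSurjection}; and where you invoke the universal property of the coequalizer for $\otimes_A$, the paper instead works directly with the separability idempotent $p_{M,N}$ (Lemma~\ref{lem:TensoratorCompatibilityWithSeparabilityIdempotent}), which are equivalent in this separable/semisimple setting.
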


\begin{equation*}
\begin{tikzcd}%[column sep]
\uG
\ar[]{dr}{{(\underline{\rho},\mu)}}
\ar[dd,dashed, "\exists?"]
\\
& 
\uEqBrStab(A)
\ar[hook]{r}{}
&
\uEqBr(\cC)
\\
\uEqBr(\cC|A)
\ar[swap, two heads]{ur}{\underline{F}}
\ar[dr, "\underline{F}_A"]
\\
&
\uEqBr(\cC_A^\loc)
\end{tikzcd}
\end{equation*}
We refer the reader to \S\ref{sec:InducedActionForEtaleAlgebra} for more details and the proof of Theorem \ref{thmalpha:LiftsAndGEquivariantStructure}.
This theorem is telling us that if symmetry is categorically preserved, then the group $G$ acts consistently by braided autoequivalences on the phase $\cC_A^{\loc}$ after condensation.
Physically, this means there is no categorical reason which forces spontaneous symmetry breaking.

Now having an action of $G$ on $\cC_A^{\loc}$ is not sufficient to specify a SETO. 
In general, we need an entire $G$-crossed braided extension of $\cC$. 
Given a categorical action $(\underline{\rho},\mu):\underline{G}\rightarrow\uEqBr(\cC_A^{\loc})$, there is an obstruction to the existence of such an extension $o_{4}\in H^{4}(G,U(1))$ \cite{MR2677836}, which is known as the \emph{anomaly} of the symmetry fractionalization class $(\underline{\rho},\mu)$\cite{1410.4540}.
In Theorem \ref{thmalpha:GCrossedBraidedFromGEquivariant} below, we show that if this anomaly vanishes for the action on $\cC$, then it must also vanish for the action on $\cC_{A}^{loc}$ constructed in Theorem \ref{thmalpha:LiftsAndGEquivariantStructure}. 
However, if the SETO $\cC$ has an anomalous symmetry fractionalization class $o_{4}\in H^{4}(G,U(1))$, we have not yet determined whether the phase $\cC_A^{\loc}$ after anyon condensation is anomalous or not \cite{1710.09391}. 
We leave this question for future study.

\begin{thmalpha}
\label{thmalpha:GCrossedBraidedFromGEquivariant}
Suppose $\cF$ is a $G$-crossed braided extension of $\cC$ with $G$-crossed braiding $\beta$, and let $\cF_A$ be the right $A$-modules in $\cF$.
Each $G$-equivariant algebra structure $\lambda$ induces a canonical $G$-crossed braided extension $\cE \subset \cF_A$ of $\cC_A^\loc$ 
such that gauging commutes with condensation.
\end{thmalpha}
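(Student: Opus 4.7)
The plan is to construct $\cE$ graded-piece by graded-piece inside $\cF_A$, equip it with a $G$-action and $G$-crossed braiding inherited from $\cF$ after a $\lambda$-twist, and finally identify the equivariantization $\cE^G$ with the condensation $(\cF^G)_A^\loc$.

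\textbf{Construction of $\cE$.} For each $g\in G$, define $\cE_g\subseteq(\cF_g)_A$ as the full subcategory of right $A$-modules $(M,m_M)$ with $M\in\cF_g$ satisfying the $\lambda^g$-twisted locality
\[
m_M \;=\; m_M\circ(\id_M\otimes\lambda^g)\circ\beta_{g(A),M}\circ\beta_{M,A},
\]
where both morphisms are components of the $G$-crossed braiding on $\cF$. At $g=e$ this reduces to the ordinary locality condition, so $\cE_e=\cC_A^\loc$. Verifying that $\otimes_A:\cE_g\times\cE_h\to\cE_{gh}$ reduces, via the hexagons for $\beta$ and the grading $\cF_g\otimes\cF_h\subseteq\cF_{gh}$, to the commuting square characterizing $G$-equivariance of $\lambda$ from Theorem~\ref{thmalpha:LiftsAndGEquivariantStructure}.

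\textbf{$G$-action and crossed braiding on $\cE$.} For $(M,m_M)\in\cE_h$, the object $\rho_g(M)\in\cF_{ghg^{-1}}$ inherits a right $\rho_g(A)$-action from $\rho_g(m_M)$; transporting through $\lambda^g$ produces a right $A$-action, and $G$-equivariance of $\lambda$ together with the $h$-twisted locality on $M$ yields the $ghg^{-1}$-twisted locality on $\rho_g(M)$. This defines a categorical $G$-action on $\cE$ that conjugates the grading and restricts on $\cE_e=\cC_A^\loc$ to the action constructed in Theorem~\ref{thmalpha:LiftsAndGEquivariantStructure}. The $G$-crossed braiding $\tilde\beta_{M,N}:M\otimes_A N\to\rho_g(N)\otimes_A M$ for $M\in\cE_g,\,N\in\cE_h$ is obtained by descending $\beta_{M,N}$ to the balanced tensor product; well-definedness on $\otimes_A$ uses exactly the twisted locality condition, and the hexagon/heptagon axioms for $\tilde\beta$ follow from those for $\beta$ modulo these descent verifications.

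\textbf{Gauging commutes with condensation.} The equivariantization produces the gauged theory $\cE^G$. On the other side, the pair $(A,\lambda)$ is precisely a connected \'etale algebra in the gauged theory $\cF^G$, so we may form $(\cF^G)_{(A,\lambda)}^\loc$. The comparison functor $\cE^G\to(\cF^G)_{(A,\lambda)}^\loc$ sends a $G$-equivariant object of $\cE$, i.e.\ a local $A$-module $M\in\cF$ with equivariance data $\{u_g:\rho_g(M)\to M\}$ compatible with $\lambda$, to $M$ regarded as a right $A$-module in $\cF^G$; unpacking shows that $\cF^G$-locality of $M$ is exactly the conjunction of the twisted localities for each graded piece of $M$ together with the $u_g$-compatibility with $\lambda^g$. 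The main obstacle of the entire argument is the last step: one must show that the coequalizer defining $\otimes_A$ commutes with the $G$-invariants functor defining equivariantization, and that the braidings match under this identification, while keeping track of the $G$-crossed braiding $\beta$ and the coherence isomorphisms $\mu$, $u_g$, and $\lambda^g$ simultaneously. This is a matter of careful diagrammatic bookkeeping rather than a new conceptual ingredient.
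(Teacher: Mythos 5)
Your proposal follows essentially the same route as the paper: you define $\cE_g$ via the $\lambda^g$-twisted locality condition (which coincides with the paper's formulation $r = r\circ\beta_{A,M}\circ(\lambda^g\otimes\id_M)\circ\beta_{M,A}$ by naturality of $\beta$ in its first argument), equip $\cE=\bigoplus_g\cE_g$ with the inherited $G$-action and descend $\beta$ to $\otimes_A$, then identify $\cE^G\cong(\cF^G)^\loc_{(A,\lambda)}$ componentwise. The descent step you flag as the main obstacle is exactly the paper's key technical lemma, namely that $\beta_{M,N}\circ p_{M,N}=p_{g(N),M}\circ\beta_{M,N}$ for the separability idempotents $p$, proved by a direct diagrammatic computation using the twisted locality and the $G$-equivariance of $\lambda$.
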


$$
\begin{tikzcd}
  & \cC_A^{\mathrm{loc}} \arrow[rd, "\text{gauge lifted }G\text{ action}"] &  
  \\
  \cC \arrow[rd, "\text{gauge }G\text{ action}"'] \arrow[ru,squiggly, "\text{condense }A"] &  & \cE^G 
  \\
  & \cF^G \arrow[ru,squiggly,"\text{condense }{(A,\lambda)}"']
  &
\end{tikzcd}
$$ 
Indeed, since $A\in \cC = \cF_e$, the $g$-graded component $\cE_g\subset (\cF_A)_g$ of $\cE\subset \cF_A$ consists of the $g$-\emph{local modules} for $A$ in $(\cF,\beta)$, which are the $A$-modules $(M,r)\in (\cF_A)_g$ such that
$$
r=
r\circ \beta_{A,M} \circ (\lambda^g \otimes \id_M)\circ \beta_{M,A}.
$$
We refer the reader to \S\ref{sec:InducedActionForGCrossed} for more details and the proof of Theorem \ref{thmalpha:GCrossedBraidedFromGEquivariant}. 
We remark that we take this theorem as strong evidence that our definition of symmetry being categorically preserved is the strongest possible definition available at the categorical level. 
We also point out \cite[\S5.2]{MR3613518}, where the authors have a symmetry preservation result with a similar flavor to Theorem \ref{thmalpha:GCrossedBraidedFromGEquivariant}, but in a different context.

It is now natural to ask how one can determine if symmetry is categorically preserved or broken for a given categorical action $\uG \to \uEqBr(\cC)$. 
Clearly a first obstruction is that the image of $\uG$ must lie in the full categorical subgroup $\uEqBrStab(A)\subset \uEqBr(\cC)$.
In \S\ref{sec:Obstruction} below, we discuss the obstruction to the existence of a $G$-equivariant algebra structure given a categorical action $\uG \to \uEqBrStab(A)\subset \uEqBr(\cC)$.
We begin by showing there is an exact sequence of groups
\begin{equation}
\label{eq:SESintro}
\begin{tikzcd}
1
\ar[r,""]
&
\Aut_\cC(A) 
\ar[r,"\iota"]
&
\Aut_{\cC^G}(I(A))
\ar[r,"\pi"]
&
G
\ar[r,""]
&
1
\end{tikzcd}
\end{equation}
where $\Aut_\cC(A)$ is the group of algebra automorphisms of $A\in \cC$, $I: \cC \to \cC^G$ is the right adjoint of the forgetful functor $\cC^G \to \cC$, and $\Aut_{\cC^G}(I(A))$ is the group of algebra automorphisms of $I(A)\in \cC^G$.

\begin{thmalpha}
\label{thm:SplittingsCorrespondToGEquivariantAlgebraStructures}
Splittings of the exact sequence \eqref{eq:SESintro} are in bijective correspondence with $G$-equivariant algebra structures $\lambda$ on $A\in \cC$.
\end{thmalpha}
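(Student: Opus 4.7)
The plan is to describe $I(A)$ explicitly as an algebra in $\cC^G$, deduce that $\Aut_{\cC^G}(I(A))$ fits into the claimed exact sequence, and then translate the splitting condition into the coherence identity defining a $G$-equivariant algebra structure.

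Since the forgetful functor $F\colon \cC^G\to\cC$ is strong monoidal, the right adjoint $I$ is lax monoidal, so $I(A)$ is an algebra in $\cC^G$ whose underlying object is $\bigoplus_{g\in G} g(A)$ with the permutation equivariant structure, and whose multiplication is the block-diagonal one: $g(m_A)\colon g(A)\otimes g(A)\to g(A)$ on each summand and zero across distinct summands, with unit the inclusion of $A = e(A)$.

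To build the exact sequence, I would use the adjunction to identify $\End_{\cC^G}(I(A))\cong \prod_{g\in G}\Hom_\cC(g(A), A)$ and then analyze which collections correspond to algebra automorphisms. Connectedness of $A$ combined with the fact that each $g$ is a braided autoequivalence makes each $g(A)$ a connected algebra, hence an indecomposable block of $I(A)$, so any algebra automorphism $\Phi$ induces a permutation $\sigma \in \mathrm{Sym}(G)$ of the summands. Equivariance forces $\sigma$ to commute with left $G$-translation, so $\sigma$ is right-multiplication by some $g_0 = \sigma(e)$; the map $\pi(\Phi) := g_0$ is well-defined and a group homomorphism. The kernel consists of automorphisms preserving every summand, which by equivariance are entirely determined by their restriction to $e(A) = A$, giving $\iota$. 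Surjectivity of $\pi$ follows from the standing hypothesis that the image of $\uG$ lies in $\uEqBrStab(A)$, since any algebra isomorphism $g_0^{-1}(A)\cong A$ produces an automorphism over $g_0$.

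The heart of the argument is the bijection between splittings and $G$-equivariant structures. A splitting $s\colon G \to \Aut_{\cC^G}(I(A))$ picks, for each $g$, an automorphism whose induced permutation is right-multiplication by $g$; the preceding analysis identifies this datum with a single algebra isomorphism $\lambda^g\colon g(A)\to A$, and $s(e)=\id$ forces $\lambda^e=\id_A$. Unpacking the composite $s(g)\circ s(h)$ on the $e$-summand, using the permutation rule, the equivariance structure of $I(A)$, and the coherence isomorphisms $\mu^A_{g,h}$ of the categorical action, the group-homomorphism identity $s(g)\circ s(h)=s(gh)$ becomes exactly
\[
\lambda^{gh}\circ\mu^A_{g,h} \;=\; \lambda^g\circ g(\lambda^h),
\]
which is the $G$-equivariant algebra axiom from the definition before Theorem \ref{thmalpha:LiftsAndGEquivariantStructure}. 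Conversely, given such a family $\lambda$, extending each $\lambda^g$ to all summands of $I(A)$ by equivariance produces a splitting $s$. The main obstacle is this last computation: tracking how the equivariance isomorphisms of $I(A)$ interact with the coherence data $\mu^A_{g,h}$ when writing out composition in $\Aut_{\cC^G}(I(A))$ is where the identification either produces the correct coherence square or gets twisted by an unwanted factor; by contrast, the surjectivity of $\pi$, injectivity of $\iota$, and exactness in the middle are routine consequences of the block decomposition of $I(A)$.
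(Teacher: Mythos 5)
Your outline follows the same strategy as the paper: build $I(A)=\bigoplus_{g\in G} g(A)$ with block-diagonal multiplication, derive the exact sequence, and translate splittings into the coherence condition on $\lambda=\{\lambda^g\}$. You do substitute a cleaner conceptual route for one of the paper's computations: where the paper's Lemma \ref{lem:UniqueNonzeroComponent} argues directly from the unit and multiplication axioms that an equivariant automorphism $f$ has a unique nonzero component $f_{e,g}$, you instead observe that in $\cC$ the algebra $I(A)$ decomposes into connected (hence indecomposable) blocks $g(A)$, so any algebra automorphism permutes the minimal central idempotents, and $G$-equivariance forces that permutation to commute with left translation, hence to be right multiplication. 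That makes the origin of $\pi$ and the block structure transparent and is a perfectly valid alternative to the paper's direct computation.

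That said, there is a genuine gap at the step you yourself flag as "the heart of the argument." You assert that unwinding $s(g)\circ s(h)=s(gh)$ gives exactly $\lambda^{gh}\circ\mu^A_{g,h}=\lambda^g\circ g(\lambda^h)$ "or gets twisted by an unwanted factor," but do not resolve which; since the entire content of Theorem \ref{thm:SplittingsCorrespondToGEquivariantAlgebraStructures} is that the two conditions coincide on the nose, this must be checked. The paper does it by first establishing the component formula \eqref{section formula} for the unique lift of $\lambda^g$ to $\Aut_{\cC^G}(I(A))$, then writing the homomorphism condition componentwise as \eqref{eq:EquivariantAlgebraFromSplitting1}, substituting the formula to get \eqref{eq:EquivariantAlgebraFromSplitting2}, and specializing $h=k$ to recover exactly the equivariance square \eqref{eq:EquivariantObject}; the reverse implication applies $kh^{-1}g^{-1}$ to \eqref{eq:EquivariantObject} and precomposes with $(\mu^A_{kh^{-1}g^{-1},g})^{-1}$. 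A second omission is the verification (the paper's Lemma \ref{lem:SurjectiveGroupHom}) that a given algebra isomorphism $\lambda^g: g(A)\to A$ genuinely extends via the component formula to an algebra automorphism of $I(A)$ compatible with the multiplication $n$; this is what makes $\pi$ surjective and the assignment $\lambda\mapsto s$ well-defined in the converse direction. Both are routine but neither is free, and the first is precisely where a sign or $\mu$-twist could sneak in.
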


Thus the failure of the short exact sequence \eqref{eq:SESintro} to split forces symmetry to be broken under anyon condensation, independent of the microscopic energetics. 
If the exact sequence does not split for the whole group $G$ but there is a subgroup $H\subset G$ which admits a splitting, then the total symmetry $G$ must break, but is allowed to break down to $H$. 
If $H\subset G$ admits no splitting, however, then the symmetry $G$ can never break down to $H$. 

If the exact sequence (\ref{eq:SESintro}) splits and $G$ symmetry is preserved after condensing $A$, there is also a physical consequence for its inequivalent splittings: they correspond to different gapped $G$-symmetry phases obtained by $A$ condensation. This point will be illustrated by example \S\ref{ex:ToricCodeNoObstruction} using the toric code. 

We emphasize that Theorem \ref{thm:SplittingsCorrespondToGEquivariantAlgebraStructures} applies to a generic SETO, no matter whether it is a regular SETO realized in two-dimensional (2d) lattice models with onsite symmetries, or an anomalous SETO realized on the 2d surface of a three-dimensional SPT phase. 
In other words, the symmetry breaking rules for anyon condensation transitions is valid irrespective of the obstruction $o_{4}\in H^{4}(G,U(1))$ for a 2d SETO.

We examine many examples in detail in \S\ref{sec:Examples}.
We begin with Landau Theory in \S\ref{sec:LandauTheory}, followed by the Toric Code in \S\ref{sec:ToricCode}, and stable $G$-actions where $g=\id_\cC$ for all $g\in G$ in \S\ref{sec:StableActions}.
In \S\ref{subsection:UniversalExample}, we give a universal example using Drinfeld doubles of finite groups, showing that any prescribed exact sequence of finite groups can arise as the second obstruction of categorical symmetry preservation \eqref{eq:SESintro}.
In \S\ref{sec:AutomaticallyPreservedOrBroken}, we include examples where symmetry must be either automatically categorically preserved or broken.

Finally, in \S\ref{sec:AQFT} below, 
we give an application to algebraic quantum field theory.
Given a rational conformal net $\cB$, whose representation category $\Rep(\cB)$ is a UMTC, extensions $\cA\supset \cB$ correspond to \emph{irreducible Q-systems} in $\Rep(\cB)$, a.k.a.\ connected \'{e}tale ${\rm C^*}$ Frobenius algebras $A\in \Rep(\cB)$ \cite{MR1332979,MR3424476}.
We prove in Proposition \ref{prop:CNets} that a global symmetry $G$ of $\cB$ extends to $\cA$ only if the associated categorical action $\uG \to \uEqBr(\cC)$ lies in $\uEqBrStab(A)$.
Moreover, extensions $\cA\supset \cB$ are in bijective correspondence with splittings of the exact sequence \eqref{eq:SESintro}.

\paragraph{Acknowledgements.}
The authors would like to thank the following people for helpful comments and conversations:
Alexei Davydov, 
C\'{e}sar Galindo,
Liang Kong, 
Tian Lan,
Dominic Williamson, 
Yilong Wang,
Zhenghan Wang, and
Xiao-Gang Wen.
Marcel Bischoff was suppported by NSF DMS grant 1700192/1821162.
Corey Jones was supported by NSF DMS grant 1654159.
Yuan-Ming Lu was supported by NSF DMR grant 1653769.
David Penneys was supported by NSF DMS grants 1500387/1655912 and 1654159.

\section{Categorical groups, condensation, and gauging}

In this section, we give the requisite backgound on categorical groups, (de)equivariantization for fusion categories, and condensation and gauging for braided fusion categories.
We refer the reader to \cite{MR3242743} for background on tensor categories, module categories, algebra objects, and module objects.

%%%%%%%%%%%%%%%%%%%%%%%%%%%%%%%%%%%%%%%%%%%%%%%%%%%%%%%%%%%%%
\subsection{Categorical group actions and levels of symmetry}

In this article, we identify a group $G$ with the \emph{categorical 0-group} which has one object $*$ and $\Hom(*\to *)=G$ with composition given by the group law.

\begin{defn}
A \emph{categorical (1-)group} is a 2-category with one object $*$, every 1-morphism is invertible up to a 2-morphism, and all 2-morphisms are invertible.
%A \emph{categorical $n$-group} is an $(n+1)$-category with one object $*$ such that every $k$-morphism is invertible up to a $k+1$ morphism for $0\leq k\leq n$, and every $(n+1)$-morphism is invertible.

Given a categorical group, we obtain a group by \emph{truncation}, where we identify all isomorphic 1-morphisms, and we forget the 2-morphisms.
\end{defn}

%\begin{ex}
%\label{example:AddingLevelsToZeroCategoricalGroup}
%Suppose $G$ is a group.
%\begin{enumerate}
%\item[(0)]
%We may view $G$ as the categorical $0$-group with $\End(*) :=G$.
%\item[(1)]
%The categorical $1$-group $\underline{G}$ has $\End(*):=G$ and $\End(g) := \{\id_g\}$ for all $g\in G$.
%\item[(2)]
%The categorical $1$-group $\underline{\underline{G}}$ has $\End(*):=G$, $\End(g) := \{\id_g\}$ for all $g\in G$, and $\End(\id_g):= \{\id_{\id_g}\}$ for all $g\in G$.
%\item[($n$)]
%For $n\geq 1$, the categorical $n$-group $\underline{G}_{n}$ is obtained from $\underline{G}_{n-1}$ by adding only identity morphisms at level $n+1$.
%\end{enumerate}
%\end{ex}

\begin{nota}
\label{nota:CategoricalGroup}
We use a sans-serif font with 1 underline to denote a categorical 1-group $\usfG$, and its truncation to a group is denoted without an underline by $\sfG$.
We use the standard font with serifs for a group $G$, and we denote the categorical 1-group obtained by only adding identitiy morphisms at level 2 by $\uG$.
Thus the sans-serif font $\sfG$ should signify to the reader that some information has been lost from $\usfG$, while $G$ has lost no information from $\uG$.
\end{nota}

%with $n$ underlines
%to denote a categorical $n$-group for $n\geq 1$.
%When $n$ is formal parameter, we use one underline with a subscript $n$.
%For example, $\uusfG$ would denote a categorical 2-group, and $\underline{{\sf H}}_n$ would denote a categorical $n$-group.

%Given a categorical $n$-group $\underline{{\sf H}}_n$, for all $0\leq k< n$, we write $\underline{{\sf H}}_k$ to denote the categorical $k$-group obtained from $\underline{{\sf H}}_n$ by truncating to level $k+1$.
%One truncates the top level of $(n+1)$-morphisms by identifying all isomorphic $n$-morphisms, and forgetting the $(n+1)$-morphisms.
%One truncates to level $k$ by iterating the truncation procedure.

%We denote an ordinary (0-categorical) group $G$ using the standard font to indicate it is not a truncation of an $n$-categorical group.
%When we take $G$ and formally add underlines, we are performing the procedure from Example \ref{example:AddingLevelsToZeroCategoricalGroup}.
%This should be viewed in contrast with the notation $\sfG$, which signifies that $\sfG$ is a truncation of some categoreical $n$-group which is not necessarily obtained from $\sfG$ via the procedure in Example \ref{example:AddingLevelsToZeroCategoricalGroup}.

\begin{defn}
Given a categorical group $\usfG$,
we define $\pi_0(\usfG) := \{*\}$, $\pi_1(\usfG)$ is the group of equivalence classes of automorphisms of $*$, and $\pi_2(\usfG)$ is the group of automorphisms of $\id_*$.
%Given a categorical $n$-group $\usfG_n$, 
%we define $\pi_0(\usfG_n) = \{*\}$, and for $1\leq k\leq n+1$, we define $\pi_k(\usfG_n)$ the group of equivalence classes of $k$-automorphisms of a $k-1$ morphism.
%For example, $\pi_1(\usfG_n)$ is the equivalence classes of 1-morphisms which forms a group, and $\pi_2(\usfG_n)$ is the equivalence classes of endomorphisms of the identity 1-morphism.
\end{defn}

\begin{ex}[Sinh, {\cite[\S4.2]{MR2664619}}]
Suppose we have a tuple $(H,A,\pi,\omega)$ where $H$ is a group, $A$ is an abelian group, $\pi: H \to \Aut(A)$ is a group homomorphism, and $\omega \in Z^3(H,A,\pi)$.
From this data, we can construct a categorical group $\usfG:=\usfG(H,A,\pi, \omega)$ with 1-morphism set $H$ with composition given by the group law, and 2-morphism set $\Hom(h,k) =\delta_{h=k} A$ for $h,k\in H$.
The homomorphism $\pi$ satisfies
$$
\id_h \otimes a = \pi_h(a)\otimes \id_g
\in \End(h)
\qquad\qquad
\forall h\in H, a\in A,
$$
and the associator 2-morphisms $(h\otimes k)\otimes \ell \to h \otimes (k\otimes \ell)$ for $h,k,\ell\in H$ are determined by the 3-cocycle $\omega$.
This immediately implies $\pi_1(\usfG)=H$ and $\pi_2(\usfG)=A$.
It is straightforward to verify that cohomologous 3-cocycles give equivalent categorical groups. 
Moreover, every categorical group is equivalent to one of this form.
% Suppose we have two categorical groups $\usfG^i:=\usfG(H_i,A_i,\pi^i, \omega_i)$ for $i=1,2$.
% Given a group homomorphism $\eta:H_1 \to H_2$, 
% a morphism of $H_i$-modules $\alpha: A_1 \to A_2$ satisfying $\alpha(\pi^1_h(a)) = \pi^2_{\eta(h)}(a)$ for all $h\in H_1$ and $a\in A_1$, 
% and a 2-cochain $\gamma \in C^2(H_1,A_2,\alpha\circ \pi^1)$ satisfying 
% \begin{align}
% \gamma(e_{H_1},h)
% &=
% \gamma(h,e_{H_1}) = e_{A_2}
% \hspace{4cm}
% &&\forall h \in H_1
% \label{eq:CochainUnitality}
% \\
% \gamma(h,k)
% \cdot
% \gamma(hk,\ell)
% \cdot
% \alpha(\omega_1(h,k,\ell))
% &=
% \omega_2(\eta(h),\eta(k),\eta(\ell))
% \cdot
% \pi^2_{\eta(h)}(\gamma(k,\ell))
% \cdot
% \gamma(h,k\ell)
% &&
% \forall h,k,\ell\in H_1,
% \label{eq:CochainAssociativity}
% \end{align}
% we can construct a monoidal functor of categorical groups $(F,\mu):\usfG^1 \to \usfG^2$ by $F(h):= \eta(h)$ for $h\in H_1$, $F(a):=\alpha(a)$ for $a\in A_1$, and $\mu_{h,k}:=\gamma(h,k)$.
% It is straightforward to verify that the unitality and associativity conditions for $\mu$ are exactly \eqref{eq:CochainUnitality} and \eqref{eq:CochainAssociativity} respectively.
% Moreover, every monoidal functor between 1-categorical groups is monoidally equivalent to one of this form.
\end{ex}

\begin{defn}
%\mbox{}
%Suppose $\cC$ is a semisimple category.
%\begin{itemize}
%\item 
For a semisimple linear category $\cC$,
$\uEq(\cC)$ is the categorical group with $\End(*)$ the set of autoequivalences of $\cC$ and 2-morphisms natural isomorphisms of functors.
%\end{itemize}
%Now suppose $\cC$ is a tensor category.
%\begin{itemize}
%\item 

For a (semisimple) tensor category $\cC$,
$\uEqTens(\cC)$ is the categorical group with $\End(*)$ the set of tensor autoequivalences of $\cC$ and 2-morphisms monoidal natural isomorphisms of tensor functors.
%\end{itemize}
%Finally, suppose $\cC$ is a braided tensor category.
%\begin{itemize}
%\item 

For a (semisimple) braided tensor category $\cC$,
$\uEqBr(\cC)$ is the categorical group with $\End(*)$ the set of braided tensor autoequivalences of $\cC$ and 2-morphisms monoidal natural isomorphisms of tensor functors.
%\item 
%$\uuPic(\cC)$ is the categorical 2-group with $\End(*)$ the set of invertible $\cC$-module categories,\footnote{Since $\cC$ is braided, any module category can be viewed as a bimodule category.} 
%2-morphisms $\cC$-module equivalences, and 3-morphisms $\cC$-module natural isomorphisms. 
%We always have a canonical monoidal functor from the truncation $\uPic(\cC)\rightarrow \uEqBr(\cC)$, which is an equivalence when 
%$\cC$ is a non-degenerately braided fusion category \nn{cite}.
%\nn{remove mention of $\uuPic$?}
%\end{itemize}
\end{defn}

The next definition is based on \cite{1410.4540,MR3555361}.

\begin{defn}
\label{defn:LevelsOfSymmetry}
Suppose $\cC$ is a (semisimple) tensor category and $G$ is a group.
There are two levels of symmetry action in this setting:
\begin{enumerate}[(1)]
\item 
A \emph{first level symmetry action} is a group homomorphism $\rho:G \to \EqTens(\cC)$.
\item
A \emph{second level symmetry action}, also called a \emph{categorical $G$-action}, is a monoidal functor $(\underline{\rho},\mu): \uG \to \uEqTens(\cC)$.
Notice that a second level symmetry decategorifies to a first level symmetry.
\end{enumerate}
Observe that there are analogous definitions of first and second level symmetry actions when $\cC$ is just a semisimple category, where the targets are $\Eq(\cC)$ and $\uEq(\cC)$ respectively.

Now suppose $\cC$ is a non-degenerately braided fusion category.
We now have three levels of symmetry action:
\begin{enumerate}[(1)]
\item 
A \emph{first level symmetry action} is a homomorphism $\rho:G\rightarrow \EqBr(\cC)$.

\item
A \emph{second level symmetry action}, also called a \emph{categorical $G$-action},
is a monoidal functor $\underline{\rho}:\uG\rightarrow \uEqBr(\mathcal{C})$.
Again, a second level symmetry decategorifies to a first level symmetry. 

\item
A \emph{third level symmetry action} is a $G$-crossed braided extension of $\cC$. 
The restriction of the $G$-action to a monoidal functor $\uG\rightarrow \uEqBr(\cC)$ is a second level symmetry.
%This can alternately be described by a monoidal 2-functor $\underline{\underline{\rho}}:\uuG\rightarrow \uuPic(\cC)$. 
%This third level symmetry decategorifies to a second level symmetry since $\uPic(\cC)\cong \uEqBr(\cC)$. 
%Alternately, we can see this decategorification as follows: a $G$-crossed braided extension of $\cC$ gives a canonical categorical $G$-action on $\cC$.
\end{enumerate}
\end{defn}

%%%%%%%%%%%%%%%%%%%%%%%%%%%%%%%%%%%%%%%%%%%%%%%%%%%%%%%%%%%%%%%
\subsection{(De)equivariantization, condensation, and gauging}

We now rapidly recall the notions of (de)equivariantization, condensation, and gauging.
A general reference for this material is the book \cite{MR3242743}.
Other references include \cite{MR1936496,MR2609644,MR3039775,MR3555361}.
For this section, $\cC$ is a fusion category.

\begin{nota}
\label{nota:CategoricalAction}
Suppose $(\underline{\rho},\mu):\uG\rightarrow \uEqTens(\mathcal{C})$ is a categorical action,
where the tensorator $\mu = \{\mu_{g,h}: \underline{\rho}(g) \circ \underline{\rho}(h) \to \underline{\rho}(gh)\}_{g,h\in G}$ is a family of monoidal natural isomorphisms satisfying associativity and unitality axioms.
To ease the notation, we still write $g$ for $\underline{\rho}(g)$.
%We also suppress the isomorphisms $\mu^X_{g,e},\mu^X_{g,e}$ for all $g\in G$ and $X\in \cC$, which can be assumed to be $\id_{g(X)}$.
We denote the tensorator of $g$ by $\psi^g = \{\psi^g_{a,b}: g(a)\otimes g(b) \xrightarrow{\sim} g(a\otimes b)\}_{a,b\in\cC}$.

By \cite[Thm.~1.1]{MR3671186}, one may assume that the action is \emph{strict}, so that $g\circ h = gh$ for all $g,h\in G$.
However, for the sake of generality, we will only assume \emph{strict unitality} of the action:
\begin{itemize}
\item 
Each monoidal functor $(g, \psi^g)$ is \emph{unital} \cite[Prop.~3.1]{MR3671186}, i.e., for all $g\in G$, $g(1_\cC)= 1_\cC$ and $g(\id_{1_\cC}) = \id_{1_\cC}$, and
\item
$e=\id_\cC$, $e\circ g= g\circ e =g$ and $\mu_{g,e} = \mu_{e,g} = \id_g$ for all $g\in G$.
\end{itemize}
\end{nota}

\begin{defn}
\label{defn:Equivariantization}
A $G$-\emph{equivariant object} is a pair $(X,\lambda)$ where $X\in \cC$ and $\lambda = \{\lambda^{g}:g(X)\rightarrow X \}_{g\in G}$ is a family of isomorphisms such that the following diagram commutes for all $g,h\in G$:
\begin{equation}
\label{eq:EquivariantObject}
\begin{tikzcd}
g(h(X)) \arrow{r}{g(\lambda^{h})} \arrow{d}[swap]{\mu^{X}_{g,h}} 
& 
g(X) \arrow{d}{\lambda^{g}} 
\\
gh(X) \arrow{r}{\lambda^{gh}} 
& 
X.
\end{tikzcd}
\end{equation}
Given $G$-equivariant objects $(X,\lambda), (Y,\kappa)$, 
we call a morphism $f\in \cC(X\to Y)$ a $G$-\emph{equivariant morphism} if the following diagram commutes for all $g\in G$:
\begin{equation}
\label{eq:EquivariantMorphism}
\begin{tikzcd}
g(X) \arrow{r}{\lambda^{g}} \arrow{d}[swap]{g(f)} 
& 
X \arrow{d}{f} 
\\
g(Y) \arrow{r}{\kappa^{g}} 
& 
Y.
\end{tikzcd}
\end{equation}
The \emph{equivariantization} $\cC^G$ is the category whose objects are $G$-equivariant objects and whose morphisms are $G$-equivariant morphisms.
The tensor product in $\cC^G$ is given by 
\begin{equation}
\label{eq:TensorProductOnEquivariantization}
(X,\lambda)\otimes (Y,\kappa) := (X\otimes Y, (\lambda^g \otimes \kappa^g)\circ(\psi^g_{X,Y})^{-1})
\end{equation}
and the unit object is $(1_\cC, \id_{1_\cC})$.
\end{defn}

\begin{rem}
\label{rem:StrictlyUnitalEquivariantStructure}
Observe that when the $G$-action is strictly unital, the commutativity of \eqref{eq:EquivariantObject} with $g=h=e$ shows that any $G$-equivariant object $(X,\lambda)$ must have $\lambda^e = \id_X$.
\end{rem}

The inverse process to equivariantization is \emph{de-equivariantization}. 

\begin{defn}
\label{defn:De-equivariantization}
Suppose $\iota:\Rep(G)\to\cZ(\cC)$ is a fully faithful braided tensor functor such that the composite $F\circ \iota : \Rep(G) \to \cC$ is still fully faithful, where $F:\cZ(\cC)\to\cC$ is the forgetful functor.
(Such an inclusion $\Rep(G) \subset \cZ(\cC)$ is called a \emph{Tannakian} subcategory.)
%, i.e., the diagram
%\begin{equation*}
%  \begin{tikzcd}
%\Rep(G) \arrow[r, "\iota"] \arrow[rd, dashed] & \cZ(\cC) \arrow[d, "F"] \\
% & \cC
%\end{tikzcd}\,.
%\end{equation*}
%commutes for an injective tensor functor $\Rep(G)\to \cC$.
Let $\cO(G)\in \Rep(G)$ denote the algebra object of functions $G\to \bbC$ whose multiplication is given by $\chi_g \cdot \chi_h = \delta_{g=h} \chi_g$ where $\chi_g(h) = \delta_{h=g}$ for $g,h\in G$.
Then $\iota(\cO(G))$ is an \'{e}tale algebra object in $\cC$ whose
category of right modules $\cC_G:=\cC_{\iota(\cO(G))}$
is a fusion category, called the \emph{de-equivariantization} of
$\cC$.

More generally, given a separable algebra object $A\in \cC$ which lifts to a commutative (and thus \'{e}tale) algebra in the Drinfeld center $Z(\cC)$, the category $\cC_A$ of right $A$-modules is a tensor category which is called the de-equivariantization of $\cC$ by $A$.
In more detail, if $e_A=\{e_{A,c}: A\otimes c \xrightarrow{\sim} c\otimes A\}_{c\in \cC}$ is a half-braiding for $A$ such that $m,i$ are morphisms in $Z(\cC)$, we define the tensor product of right $A$-modules $(M,r_M)$ and $(N,r_N)$ as the image of the \emph{separability idempotent}
\begin{equation}
\label{eq:SeparabilityIdempotent}
p_{M,N}
:=
(r_M \otimes r_N)
\circ
(\id_{M\otimes A} \otimes e_{A,N})
\circ
(\id_M \otimes (s\circ i) \otimes \id_N)
\in
\cC_A(M\otimes N \to M\otimes N)
\end{equation}
where $s\in\Hom_{A-A}(A \to A\otimes A)$ is a \emph{splitting} such that $m\circ s = \id_A$.
When $A$ is connected, $s$ is unique, since 
$\Hom_{A-A}(A \to A\otimes A) \cong \cC(A \to 1) = \bbC i$ similar to \cite[Fig.~4]{MR1936496}.
\end{defn}

\begin{fact}
For fusion categories, the maps $\cC \mapsto \cC^G$ and $\cD \mapsto \cD_G$ are mutually inverse up to equivalence; we refer the reader to \cite[Rem.~8.23.5]{MR3242743} for more details.
\end{fact}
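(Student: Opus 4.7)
The plan is to exhibit explicit, mutually quasi-inverse constructions between (i) fusion categories $\cC$ equipped with a strictly unital categorical $G$-action $(\underline{\rho}, \mu): \uG \to \uEqTens(\cC)$, and (ii) fusion categories $\cD$ equipped with a Tannakian subcategory $\iota: \Rep(G) \hookrightarrow \cZ(\cD)$ whose composite with the forgetful functor $\cZ(\cD) \to \cD$ remains fully faithful. I would verify the two round trips separately, and the starting point in each case is the realization that both constructions are controlled by the regular algebra $\cO(G) \in \Rep(G)$.

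For the round trip $\cC \mapsto \cC^G \mapsto (\cC^G)_G \simeq \cC$, I would first construct a canonical fully faithful braided functor $\Rep(G) \to \cZ(\cC^G)$ sending $V$ to the equivariant object $(V \otimes 1_\cC, \lambda^V)$, where strict unitality gives $g(V \otimes 1_\cC) = V \otimes 1_\cC$ and the equivariant structure $\lambda^{V,g}$ is the $G$-action on $V$ tensored with $\id_{1_\cC}$. The composite $\Rep(G) \to \cZ(\cC^G) \to \cC^G$ is still fully faithful, so $\cC^G$ is an admissible starting point for de-equivariantization. The induction functor $\Ind: \cC \to \cC^G$ right adjoint to the forgetful functor $F$ sends $X$ to $\bigoplus_{g \in G} g(X)$ with the tautological permutation equivariant structure; this is readily identified with the free $\iota(\cO(G))$-module functor, and hence exhibits $\cC$ as the category of right $\iota(\cO(G))$-modules in $\cC^G$. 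The monoidal structure of $\Ind$ matches that of $(\cC^G)_G$ by direct verification.

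For the reverse round trip $\cD \mapsto \cD_G \mapsto (\cD_G)^G \simeq \cD$, I would extract a canonical categorical $G$-action on $\cD_G$ from the tautological right-translation action on $\cO(G)$: each $h \in G$ induces an algebra automorphism $\chi_g \mapsto \chi_{gh}$ of $\cO(G)$, hence via $\iota$ of $\iota(\cO(G)) \in \cD$, and the resulting twisting of the right-module structure defines a tensor autoequivalence of $\cD_G$; naturality and associativity of this family assemble into a categorical $G$-action. An equivalence $\cD \xrightarrow{\sim} (\cD_G)^G$ is then given by $Y \mapsto (Y \otimes \iota(\cO(G)), \text{diagonal } G\text{-action})$, with quasi-inverse sending a $G$-equivariant right module $(M, r, \lambda)$ to its isotypic summand for the idempotent $\chi_e$.

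The hard part will be the coherence bookkeeping rather than any conceptual obstruction: one must check that the tensor product on $\cC^G$ given by \eqref{eq:TensorProductOnEquivariantization} corresponds, under these identifications, to the tensor product of modules computed via the separability idempotent \eqref{eq:SeparabilityIdempotent}; that associators and half-braidings round-trip correctly; and that the strict unitality hypothesis on the $G$-action is preserved up to natural monoidal equivalence. These verifications are entirely mechanical once the maps are in place, and the complete details are carried out in \cite[\S8.23]{MR3242743}.
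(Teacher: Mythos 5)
Your sketch is fine: the paper does not actually prove this Fact but defers entirely to \cite[Rem.~8.23.5]{MR3242743}, and what you have written is the standard argument from that source, so there is nothing to compare against at the level of approach. Two small wording issues would need fixing in a full write-up: $\Ind\colon\cC\to\cC^G$ is not literally ``the free $\iota(\cO(G))$-module functor'' (whose domain would be $\cC^G$, not $\cC$); rather, the equivalence $\cC\simeq(\cC^G)_{\iota(\cO(G))}$ sends $X$ to $\Ind(X)$ equipped with the $\iota(\cO(G))$-module structure supplied by the lax monoidal structure of $\Ind$, which is exactly the construction the paper uses in Definition~\ref{defn:AlgebraI(A)} to build $I(A)$. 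Likewise the equivariant structure on $Y\otimes\iota(\cO(G))$ is not a ``diagonal'' $G$-action (there is no $G$-action on $Y$); it is the \emph{left}-translation action on the $\cO(G)$ factor, which commutes with the \emph{right}-translation used both for the module structure and for the induced $G$-action on $\cD_G$.
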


In the presence of a braiding $\beta$ on $\cC$, given an algebra object $A\in \cC$, we get a canonical lift of $A$ to $Z(\cC)$ using the half-braiding $\beta_A$.
We can ask whether the de-equivariantization $\cC_A$ carries a canonical braiding.
In general, this is not the case, so we pass to the subcategory $\cC_A^\loc$ of \emph{local/dyslectic} right modules $(M,r)\in \cC_A$ which satisfy
$r \circ \beta_{A,M}\circ \beta_{M,A} = r$.
The braiding $\beta$ defines a canonical braiding on $\cC_A^\loc$.
The braided fusion category $\cC_A^\loc$ is called the \emph{condensation} of $\cC$ by $A$.

\begin{ex} 
Let $\cC$ be a braided fusion category. 
An invertible object $g\in\cC$ is called a \emph{boson} or \emph{simple-current} 
%\nn{or simple-current?}\nn{seems like some people call it a boson only if it generates $\Rep(\bbZ_2)$?!} 
if $\beta_{g,g}=\id_{g\otimes g}$.
If $B\subset \Inv(\cC)$ is a subgroup consisting of bosons, then the full subcategory of $\cC$ generated by $B$ is braided equivalent to $\Rep(\widehat{B})$, where $\widehat{B}$ is the dual group of $B$.
We call $\cO(\widehat{B})$ the \'{e}tale algebra induced by the group of bosons $B$.
The condensation $\cC_{\cO(\widehat{B})}^\loc$ is also referred to as the braided tensor category obtained by \emph{condensing the bosons} $B$.
\end{ex}

An inverse process to condensing a connected \'{e}tale algebra of the form $\cO(G)\in \Rep(G)\subset \cC$ is given by \emph{gauging} \cite{MR3555361,1410.4540}, which is a two step process consisting of:
\begin{enumerate}[(1)]
\item finding a $G$-crossed braided extension $\cF$ of $\cC$,
which carries a canonical categorical action $\underline{G}\to \uEqTens(\cF)$, and 
  \item taking the equivariantization $\cF^G$.
\end{enumerate}

\begin{defn}[{\cite[\S8.24]{MR3242743}}]
A $G$-crossed braided extension of $\cC$ based on a categorical action $(\underline{\rho},\mu):\uG \to \uEqBr(\cC)$ is a fusion category $\cF$
equipped with the following structure:
\begin{itemize}
\item 
$\cF$ is a faithfully $G$-graded extension of $\cC$, i.e.\ $\cF=\bigoplus \cF_g$
  with $\cF_e=\cC$,
  $X_g\otimes Y_h\in\cF_{gh}$ for every $g,h\in G$, and
    $X_g\in\cF_g$, and $Y_h\in\cF_h$.
\item 
There is an action $(\underline{\rho}^\cF,\mu^\cF):\uG\to \uEqTens(\cF)$ extending the action on $\cF_e=\cC$ such that $g(\cF_h)\subset \cF_{ghg^{-1}}$ for all $g,h\in G$.
\item 
There is a family of natural isomorphisms
    \begin{align}
      \beta_{X,Y} &\colon X\otimes Y\to g(Y)\otimes X\,, 
      &g\in G, X\in\cF_g, Y\in \cF
    \end{align}
     called the $G$-\emph{crossed braiding}, which extends the braiding $\beta$ on $\cF_e=\cC$.
\end{itemize}
Moreover, this data must satisfy the following coherence identities:
\begin{align*}
((\mu_{g,h}^\cF)_Y\otimes \id_{g(X)})
\circ
(\psi^g_{g(Y),X})^{-1}
\circ 
g(\beta_{X,Y})
\circ
\psi^g_{X,Y}
&=
((\mu^\cF_{ghg^{-1},g})_Y\otimes \id_{g(X)})
\circ
\beta_{g(X),g(Y)}
&&\forall X\in \cF_h
\\
(\psi^g_{Y,Z}\otimes \id_X)\circ
(\id_{g(Y)}\otimes \beta_{X,Z})
%\circ
%\alpha_{g(Y),X,Z}
\circ
(\beta_{X,Y}\otimes \id_Z)
&=
\beta_{X,Y\otimes Z}
&
&\forall X\in \cF_g
\\
((\mu^\cF_{g,h})_Z\otimes \id_{X\otimes Y})
\circ
(\beta_{X,h(Z)}\otimes \id_Y)
\circ
(\id_X\otimes \beta_{Y,Z})
&=
\beta_{X\otimes Y, Z}
&&
\hspace{-1.3cm}
\forall X\in \cC_g, Y\in \cC_h
\end{align*}
%We refer the reader to \cite{MR2609644,MR2674592} for more details.
%Note that such an extension may or may not exist, and when it exists, it may not be unique.
\end{defn}

\begin{fact}
When the $G$-action on $\cC$ is strict, which we may assume by \cite[Thm.~1.1]{MR3671186}, every $G$-crossed braided extension $\cF$ of $\cC$ is equivalent to a strict $G$-crossed braided extension of $\cC$ by \cite[Thm.~5.6]{MR3671186}.
That is, if we only consider strict $G$-actions, we do not lose any $G$-crossed braided extensions.
\end{fact}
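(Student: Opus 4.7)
The plan is to strictify a $G$-crossed braided extension in two stages: first strictify the underlying categorical action on $\cF$, then transport the $G$-crossed braiding structure along the resulting equivalence. For the first stage, apply Galindo's strictification theorem \cite[Thm.~1.1]{MR3671186} to the categorical action $(\underline{\rho}^\cF, \mu^\cF) : \uG \to \uEqTens(\cF)$. This produces a fusion category $\cF'$ together with a strong monoidal equivalence $E : \cF \to \cF'$ and a strict $G$-action on $\cF'$, meaning $g \circ h = gh$ on the nose and $\mu^{\cF'}_{g,h} = \id$, through which $E$ is $G$-equivariant up to chosen natural isomorphisms. Because the action on $\cC = \cF_e$ is already strict by assumption and is preserved by $E$, the restriction of the new action to $E(\cC)$ agrees strictly with the given strict action on $\cC$.

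Next, I would transport the $G$-grading and the $G$-crossed braiding along $E$. Define $\cF'_g := E(\cF_g)$; since $g(\cF_h) \subseteq \cF_{ghg^{-1}}$ in the original extension and $E$ intertwines the actions, this grading is closed under the strict $G$-action on $\cF'$ and makes $\cF'$ a faithfully $G$-graded extension of $E(\cC) \simeq \cC$. For the $G$-crossed braiding, given $X \in \cF_g$ and $Y \in \cF$, set
\begin{equation*}
\beta'_{E(X), E(Y)} := E(\beta_{X,Y}),
\end{equation*}
viewed as a morphism $E(X) \otimes E(Y) \to g(E(Y)) \otimes E(X)$ using the monoidal compositors of $E$ and the equivariance isomorphisms witnessing that $E$ intertwines the two $G$-actions. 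Naturality of $\beta'$ follows from naturality of $\beta$ together with fullness and faithfulness of $E$.

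The main obstacle is verifying that $\beta'$ satisfies the three $G$-crossed hexagon coherence axioms of the definition, now simplified by $\mu^{\cF'} = \id$. In the strict setting the correction terms involving $\mu^{\cF'}_{g,h}$ and the tensorators $\psi^{g,\cF'}$ (which may still be nontrivial) must be carefully tracked, but the first axiom collapses to $g(\beta'_{X,Y}) = \beta'_{g(X), g(Y)}$ up to $\psi^{g,\cF'}$ conjugation, while the two hexagons reduce to their strict analogues without $\mu$-correction. Each identity follows by applying $E$ to the corresponding coherence identity for $\beta$ in $\cF$ and simplifying using the monoidal and equivariance data of $E$ together with the strictness of $\mu^{\cF'}$; this is precisely the argument carried out in \cite[Thm.~5.6]{MR3671186}, whose conclusion states that the 2-category of $G$-crossed braided extensions of $\cC$ is equivalent to its full sub-2-category of extensions whose ambient $G$-action is strict. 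Hence restricting to strict $G$-actions loses no $G$-crossed braided extensions up to equivalence.
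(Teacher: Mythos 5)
The paper gives no independent argument for this Fact—it is justified entirely by citing Galindo's strictification results (\cite[Thm.~1.1]{MR3671186} and \cite[Thm.~5.6]{MR3671186})—and your proposal, after outlining how the grading and $G$-crossed braiding would be transported along the strictifying equivalence, defers the essential coherence verification to that same Thm.~5.6. So your argument is essentially the same as the paper's: correct at the citation level, with your two-stage sketch serving as a reasonable (though not independently complete) gloss on what Galindo's theorem accomplishes.
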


\begin{fact}
For non-degenerately braided fusion categories, condensing $\cO(G)$ and gauging a second level categorical $G$-symmetry (taking the equivariantization of a $G$-crossed braided extension) are mutually inverse; we refer the reader to \cite[\S4]{MR2609644} and \cite{MR3555361} for more details.
\end{fact}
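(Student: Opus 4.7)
The plan is to exhibit mutually inverse constructions between $G$-crossed braided extensions of a non-degenerately braided fusion category $\cC$ and non-degenerately braided fusion categories $\cD$ equipped with a Tannakian embedding $\Rep(G) \hookrightarrow \cD$ whose category of local $\cO(G)$-modules recovers $\cC$. One direction is gauging, $\cF \mapsto (\cF^G, \Rep(G)\hookrightarrow \cF^G)$; the other is condensation, $\cD \mapsto \cD_{\cO(G)}$.

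For gauging followed by condensation, take a $G$-crossed braided extension $\cF = \bigoplus_{g\in G} \cF_g$ with its canonical categorical $G$-action and form the equivariantization $\cF^G$. Trivial equivariant structures on the unit $1_\cC \in \cF_e$, twisted by characters of $G$, give a canonical fully faithful braided embedding $\Rep(G) \hookrightarrow \cF^G$ with symmetric induced braiding, furnishing the Tannakian subcategory to condense. The regular algebra $\cO(G) \in \cF^G$ corresponds under induction to $1_\cF \in \cF$, and a right $\cO(G)$-module in $\cF^G$ is a $G$-equivariant object carrying an action of the $G$-graded algebra $\cO(G) = \bigoplus_g \bbC_g$. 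Decomposing along the idempotents $\chi_g$ splits any such module into components lying precisely in $\cF_g$, yielding $(\cF^G)_{\cO(G)} \simeq \cF$ as $G$-graded fusion categories; the local module condition then picks out the $e$-graded part and gives $(\cF^G)^{\loc}_{\cO(G)} \simeq \cC$. Conversely, given $\cD$ with a Tannakian $\Rep(G) \subset \cD$, the $G$-grading on $\cO(G)$ descends to a faithful $G$-grading on $\cD_{\cO(G)}$, and the half-braiding of $\cO(G)$ in $\cZ(\cD)$ supplies a $G$-crossed braiding on $\cD_{\cO(G)}$ extending the ordinary braiding on $\cD^{\loc}_{\cO(G)}$. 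Standard (de)equivariantization duality for fusion categories then recovers $\cD$ upon taking $(\cD_{\cO(G)})^G$.

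The main obstacle is the bookkeeping needed to verify that the braided and $G$-crossed braided structures transfer correctly in both directions. Specifically, one must check that the $G$-crossed braiding on $\cD_{\cO(G)}$ built from the braiding on $\cD$ and the half-braiding of $\cO(G)$ via the separability idempotent \eqref{eq:SeparabilityIdempotent} satisfies the three coherence axioms of a $G$-crossed braided extension, and conversely that equivariantizing this structure reproduces the original braiding on $\cD$. These verifications are routine but lengthy string-diagrammatic computations, and I would present them along the lines of \cite[\S4]{MR2609644} and \cite{MR3555361}, which establish the analogous statement in greater generality.
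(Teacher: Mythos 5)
The paper itself offers no proof of this Fact --- it simply records it and refers to \cite[\S4]{MR2609644} and \cite{MR3555361}, which is also where you ultimately send the reader. Your outline of the two constructions and the coherence checks that one must defer to those references is the right shape for an argument, so in that sense you and the paper land in the same place.

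There is, however, a genuine error in one step of your sketch. You claim the Tannakian embedding $\Rep(G)\hookrightarrow\cF^G$ is given by ``trivial equivariant structures on the unit $1_\cC\in\cF_e$, twisted by characters of $G$.'' Twisting the trivial equivariant structure on $1_\cC$ by a character $\chi:G\to\bbC^\times$ only produces the one-dimensional representations, i.e.\ the pointed Tannakian subcategory $\Rep(G^{\mathrm{ab}})$. For non-abelian $G$ this is a proper subcategory of $\Rep(G)$, so your embedding is not fully faithful onto all of $\Rep(G)$ and you would not be condensing the full regular algebra $\cO(G)$. The correct construction sends a representation $(V,\rho)\in\Rep(G)$ to the equivariant object $\bigl(V\otimes 1_\cC,\ \lambda\bigr)$, where $V\otimes 1_\cC$ denotes a direct sum of $\dim V$ copies of the unit, and $\lambda^g=\rho(g)\otimes\id_{1_\cC}$; one then checks that the induced braiding on this full image is symmetric. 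With that correction, $I(1_\cF)=\cO(G)$ with its equivariant structure is exactly the regular algebra, and the rest of your outline --- de-equivariantization via the idempotents $\chi_g$, the local modules picking out $\cF_e=\cC$, and the converse via the $G$-grading and $G$-crossed braiding on $\cD_{\cO(G)}$ --- matches what is established in the references.
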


\begin{rem}
As mentioned above, we can condense any \'{e}tale algebra in a nondegenerately braided fusion category, not just one of the form $\cO(G)$.
It is an important open question to find the inverse process to this more general condensation.
The recent article \cite{1809.00245} provides an interesting step in this direction.
\end{rem}

\section{Obstruction for equivariant algebras}
\label{sec:Obstruction}

Suppose $\cC$ is a tensor category and $(\underline{\rho},\mu):\underline{G}\rightarrow \uEqTens(\mathcal{C})$
is a categorical action.
We assume Notation \ref{nota:CategoricalAction} to ease the notation.
In this section, we study a connected $G$-\emph{stable} algebra object in $\cC$, i.e., a connected algebra object $(A,m,i)\in \cC$ such that $g(A)\cong A$ as algebras for all $g\in G$.

Our first task is to determine when $A$ has the structure of a $G$-\emph{equivariant} algebra object, i.e. an algebra in the equivariantization $\mathcal{C}^{G}$, with the same multiplication map $m$ and unit $i$.
It is easy to show that this is equivalent to having a $G$-equivaraint structure $\lambda=\{\lambda^{g}: g(A)\rightarrow A\}_{g\in G}$ on the object $A$ such that each $\lambda^{g}$ is an algebra isomorphism (where the algebra $g(A)$ has multiplication and unit $g(m)\circ \mu^A_{g,g}, g(i)$ respectively).
That is, we must supply a family of algebra isomorphisms $\lambda=\{\lambda^{g}:g(A)\rightarrow A \}_{g\in G}$ such that \eqref{eq:EquivariantObject} holds.
We prove that the \emph{obstruction} to the existence of such a family $\lambda$ is a certain short exact sequence of groups, and the data of a splitting for this sequence is equivalent to the data of a $G$-equivariant algebra structure.
If $\Aut_\cC(A)$ is abelian, this is equivalent to the vanishing of a certain 2-cocycle in $H^2(G, \Aut_\cC(A))$ (see \S\ref{sec:exactsequence} below).

\subsection{The exact sequence associated to a \texorpdfstring{$G$}{G}-stable algebra object}

To construct our exact sequence for our $G$-stable algebra $A$, we construct a new larger $G$-equivariant algebra object.
Recall that there is a forgetful tensor functor $F: \cC^G \to \cC$ which forgets the $G$-equivariant structure.
Let $I: \cC \to \cC^G$ be the right adjoint of $F$, which we think of as an `induction' functor.
Since $F$ is a tensor functor, $I$ can be canonically endowed with the structure of a lax monoidal functor \cite{MR0360749}, which maps algebra objects to algebra objects.
We now give the explicit description of the algebra $I(A)$.

\begin{defn}
\label{defn:AlgebraI(A)}
We define the object $I(A):=\bigoplus_{g\in G} g(A)$.
The multiplication morphism $n\in \cC^G(I(A) \otimes I(A) \to I(A))$  is given on components by
$$
n^{k}_{g,h}
:=
\delta_{g=h}\delta_{g=k}\cdot  g(m)\circ \psi^g_{A,A}
:
g(A)\otimes h(A)\rightarrow k(A)
$$
The unit morphism $j \in \cC^G(1 \to I(A))$ is given on components by
$$
j_g
:=g(i)
: 1 \to g(A).
$$
It is straightforward to verify that the unit and multiplication
are unital and associative.

The $G$-equivariant structure $\nu=\{\nu^g : g(I(A)) \to I(A)\}_{g\in G}$ is given on components by
$$
\nu^{g}_h
:=
\mu^{A}_{g,h}
:
g(h(A)) \to (gh)(A).
$$
Again, it is straightforward to verify that $n$ and $j$ are $G$-equivariant maps, and thus give well-defined morphisms in $\cC^G$.
\end{defn}

We now construct a short exact sequence
\begin{equation}
\label{eq:ShortExactSequence}
\begin{tikzcd}
1
\ar[r,""]
&
\Aut_\cC(A) 
\ar[r,"\iota"]
&
\Aut_{\cC^G}(I(A))
\ar[r,"\pi"]
&
G
\ar[r,""]
&
1.
\end{tikzcd}
\end{equation}
We first analyze $\Aut_{\cC^G}(I(A))$.

\begin{lem}
\label{lem:UniqueNonzeroComponent}
Suppose $f\in \Aut_{\cC^G}(I(A))$.
\begin{enumerate}[(1)]
\item 
For $h,k\in G$, $f_{h,k}: k(A) \to h(A)$ is equal to $\mu^A_{h,e}\circ h(f_{e,h^{-1}k})\circ (\mu^A_{h,h^{-1}k})^{-1}$.
Hence $f$ is completely determined by its components $f_{e,g}: g(A) \to A$.
\item
There is a unique $g\in G$ such that $f_{e,g}\neq 0$, and $f_{e,g}: g(A) \to A$ is an algebra isomorphism.
\item
For every $h\in G$, there are unique $g,k\in G$ such that $f_{g,h}\neq 0\neq f_{h,k}$.
\end{enumerate}
\end{lem}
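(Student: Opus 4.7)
The plan is to work component-wise in the decomposition $I(A)=\bigoplus_g g(A)$ and exploit three facts about $f$: it is $G$-equivariant, it is multiplicative for $n$, and it preserves the unit $j$; combined with connectedness of $A$, these pin down $f$ completely.

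For (1), I will read off the $G$-equivariance condition $f\circ\nu^h = \nu^h\circ h(f)$ on the component from the summand $h(q(A))\subset h(I(A))$ to the summand $k(A)\subset I(A)$. Using $\nu^h_p=\mu^A_{h,p}$, exactly one intermediate summand contributes on each side, yielding the identity $f_{k,hq}\circ\mu^A_{h,q} = \mu^A_{h,h^{-1}k}\circ h(f_{h^{-1}k,q})$ for all $h,q,k$. Specializing to $k=h$ collapses the inner $f$ to $f_{e,q}$; setting $hq=k$ and solving for $f_{h,k}$ produces the stated formula. The crucial corollary I will invoke is the equivalence $f_{h,k}\neq 0 \iff f_{e,h^{-1}k}\neq 0$.

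For (2), connectedness $\cC(1,A)=\bbC\cdot i$ forces $f_{e,g}\circ g(i)=\gamma_g\cdot i$ for unique scalars $\gamma_g$. Reading $f\circ j=j$ at the $e$-summand gives $\sum_g\gamma_g = 1$, while comparing $(f\circ n)^e_{g,h}$ with $(n\circ(f\otimes f))^e_{g,h}$ yields
\[
m\circ(f_{e,g}\otimes f_{e,h}) \;=\;\delta_{g=h}\,f_{e,g}\circ g(m)\circ \psi^g_{A,A};
\]
precomposing with $g(i)\otimes h(i)$ and using $m\circ(i\otimes i)=i$ gives $\gamma_g\gamma_h = \delta_{g=h}\gamma_g$, so there is exactly one $g_0$ with $\gamma_{g_0}=1$ while all other $\gamma_g$ vanish. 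The heart of the argument, and what I expect to be the main obstacle, is upgrading ``$\gamma_g = 0$'' to the stronger ``$f_{e,g}=0$'': using the unit law $\id_{g(A)} = g(m)\circ\psi^g_{A,A}\circ(g(i)\otimes\id_{g(A)})$ together with the multiplicativity identity above, I compute $f_{e,g} = m\circ((f_{e,g}\circ g(i))\otimes f_{e,g}) = \gamma_g\cdot f_{e,g}$, so $\gamma_g\neq 1$ forces $f_{e,g}=0$. Hence $f_{e,g_0}$ is the unique nonzero component, and $\gamma_{g_0}=1$ together with the multiplicativity identity makes it a unital algebra morphism $g_0(A)\to A$. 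To conclude that it is an isomorphism, I apply the same analysis to $f^{-1}\in\Aut_{\cC^G}(I(A))$, obtaining a unique nonzero $(f^{-1})_{e,g'_0}$; reading the $(e,e)$-component of $f\circ f^{-1}=\id_{I(A)}$ then forces $g'_0 = g_0^{-1}$ and $f_{e,g_0}\circ (f^{-1})_{g_0,e}=\id_A$, with the reverse composition handled symmetrically.

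Part (3) is an immediate corollary of (1) and (2): the equivalences $f_{g,h}\neq 0 \iff f_{e,g^{-1}h}\neq 0$ and $f_{h,k}\neq 0 \iff f_{e,h^{-1}k}\neq 0$ reduce the uniqueness of the required $g$ and $k$ to the uniqueness of the index $g_0$ supplied by (2), with $g = hg_0^{-1}$ and $k = hg_0$.
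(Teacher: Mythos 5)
Your proof is correct and uses essentially the same approach as the paper: part (1) from the $G$-equivariance condition, part (2) from unitality and multiplicativity of $f$ combined with connectedness of $A$, and part (3) as a consequence of (1) and (2). The only (harmless) differences are in part (2), where you derive $f_{e,g}=0$ via the diagonal identity $f_{e,g}=\gamma_g\,f_{e,g}$ rather than the paper's off-diagonal computation $\gamma_h\,f_{e,g}=0$, and where you spell out the isomorphism claim by running the same analysis on $f^{-1}$, a step the paper leaves to the reader.
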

\begin{proof}
Note (1) follows immediately since $f: I(A) \to I(A)$ is $G$-equivariant from \eqref{eq:EquivariantMorphism}.

To prove (2), we first note that for each $g\in G$, there is a scalar $\gamma_g \in \bbC$ such that $f_{e,g}\circ g(i) = \gamma_g \cdot i \in \cC(1\to A) = \bbC \cdot i$.
Looking at the $e$-component of the unitality axiom for $(I(A),n,j)$ gives the identity
$$
\id_{A}
=
\sum_{h\in G} m\circ\psi^{A,A}_e \circ ((f_{e,h}\circ h(i))\otimes \id_{A})
=
\left(\sum_{h\in G} \gamma_h \right) m\circ (i \otimes \id_A)\textbf{}
=
\left(\sum_{h\in G}\gamma_h\right) \id_A,
$$
which immediately implies $\sum_h \gamma_h = 1$.
Fix $h\in G$ such that $\gamma_h \neq 0$.
For $g\neq h$, looking at the component $n_{h,g}^e: h(A)\otimes g(A) \to A$ yields the identity
$$
m \circ \psi^{A,A}_{e} \circ (f_{e,h}\otimes f_{e,g})
=
\delta_{h=g}
f_{e,g}\circ g(m)\circ\psi^{A,A}_g
=
0.
$$
Precomposing with $h(i)\otimes \id_{g(A)}$ yields
$$
0
=
m \circ ((f_{e,h}\circ h(i)) \otimes f_{e,g})
=
\gamma_h
\cdot
m\circ (i\otimes f_{e,g})
=
\gamma_h\cdot
f_{e,g}.
$$
Since $\gamma_h \neq 0$, we conclude $f_{e,g}=0$ whenever $g\neq h$, proving (2).
Notice this also proves $\gamma_h = 1$.
That $f_{e,g}: g(A) \to A$ is an algebra isomorphism follows immediately by looking at components as above.

Now (3) follows immediately by (1) and (2).
\end{proof}

\begin{prop}
\label{prop:InjectiveGroupHom}
Defining $\iota:\Aut_\cC(A) \to \Aut_{\cC^G}(I(A))$ by
$\iota(f)_{h,g}:=\delta_{h=g}\cdot g(f)$ gives a well-defined injective group homomorphism.
\end{prop}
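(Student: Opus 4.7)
The plan is to verify, in order, the four claims packed into the statement: well-definedness of $\iota(f)$ as an endomorphism of $I(A)$ in $\cC^G$, the algebra-homomorphism property, invertibility, and then the group-homomorphism and injectivity assertions. All of this should reduce to naturality of the coherence data $\mu$ and $\psi^g$ combined with the fact that $f$ is an algebra automorphism of $A$, so each step is essentially a componentwise diagram chase using Definition~\ref{defn:AlgebraI(A)}.

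First I would check $G$-equivariance of $\iota(f)$, i.e.\ that $\iota(f) \circ \nu^g = \nu^g \circ g(\iota(f))$ for every $g \in G$. Decomposing $g(I(A)) = \bigoplus_h g(h(A))$ and reading off the $(g(h(A)) \to k(A))$-component, the left side contributes $\delta_{k=gh}\,(gh)(f) \circ \mu^A_{g,h}$ while the right side contributes $\delta_{k=gh}\,\mu^A_{g,h} \circ g(h(f))$; these agree by naturality of $\mu_{g,h}$ applied to the morphism $f : A \to A$. Next I would check that $\iota(f)$ preserves the multiplication $n$ and unit $j$. On the component $(g(A)\otimes h(A) \to k(A))$, only $g=h=k$ gives a nonzero term, and both sides reduce to $g(f \circ m) \circ \psi^g_{A,A}$ versus $g(m) \circ g(f \otimes f) \circ \psi^g_{A,A}$, which agree because $f$ is an algebra map; compatibility with $j$ comes from $f \circ i = i$. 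Invertibility is then immediate because the obvious candidate $\iota(f^{-1})$ is shown to be two-sided inverse by the same componentwise calculation.

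With $\iota(f) \in \Aut_{\cC^G}(I(A))$ established, the homomorphism property $\iota(f \circ f') = \iota(f) \circ \iota(f')$ follows by computing
\[
(\iota(f) \circ \iota(f'))_{h,g}
= \sum_{k\in G} \iota(f)_{h,k} \circ \iota(f')_{k,g}
= \delta_{h=g}\, g(f) \circ g(f')
= \iota(f \circ f')_{h,g},
\]
and $\iota(\id_A) = \id_{I(A)}$ is clear from strict unitality ($e$-component) together with the definition. Finally, injectivity is trivial: if $\iota(f) = \iota(f')$, then reading off the $(e,e)$-component gives $f = f'$.

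The ``hard'' part here is merely bookkeeping: making sure the coherence data $\mu^A_{g,h}$ and the monoidal structure $\psi^g$ are applied in the correct slots so that the two sides of each componentwise identity truly match. No deeper input is needed beyond the strict unitality conventions of Notation~\ref{nota:CategoricalAction} and the definition of $I(A)$ above, so I expect the proof to occupy only a few lines of genuine calculation once the component formulas are written out.
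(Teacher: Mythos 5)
Your proof is correct and follows the same approach the paper takes, namely a direct componentwise verification that $\iota(f)$ is a $G$-equivariant algebra automorphism of $I(A)$; the paper simply compresses the verification into ``it is easy to verify'' and notes injectivity via $\iota(f)_{e,e}=f$, exactly as you do. One very minor point: $\iota(\id_A)=\id_{I(A)}$ needs only functoriality of each $g$ (so $g(\id_A)=\id_{g(A)}$), not strict unitality.
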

\begin{proof}
Given $f\in \Aut_\cC(A)$, define the morphism $\iota(f)_{g,h}:=\delta_{g=h} \cdot g(f)$. 
It is easy to verify this is an algebra automorphism of $I(A)$. Furthermore, since $\iota(f)_{e,e}=f$, this is clearly injective. 
\end{proof}

\begin{prop}
\label{prop:DefineSurjectiveGroupHom}
Defining $\pi: \Aut_{\cC^{G}}(I(A)) \to G$ by $f\mapsto g$ where $g\in G$ is the unique element such that $f_{e,g}\ne 0$ gives a group homomorphism.
\end{prop}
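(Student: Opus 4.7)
The plan is to reduce the homomorphism property of $\pi$ to bookkeeping on the matrix components of elements of $\Aut_{\cC^G}(I(A))$, using Lemma \ref{lem:UniqueNonzeroComponent} as the sole nontrivial input. Well-definedness of $\pi$ is immediate from part (2) of that lemma, which gives a unique $g\in G$ with $f_{e,g}\neq 0$ for any $f\in \Aut_{\cC^G}(I(A))$.

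For multiplicativity, I would fix $f,f'\in \Aut_{\cC^G}(I(A))$ with $\pi(f)=g$ and $\pi(f')=g'$, and expand the $(e,k)$-component of the composition:
$$
(f\circ f')_{e,k} \;=\; \sum_{\ell\in G} f_{e,\ell}\circ f'_{\ell,k}.
$$
The first factor vanishes unless $\ell=g$ by definition of $\pi(f)$. For the second factor, Lemma \ref{lem:UniqueNonzeroComponent}(1) rewrites $f'_{\ell,k}$ as $\mu^A_{\ell,e}\circ \ell(f'_{e,\ell^{-1}k})\circ(\mu^A_{\ell,\ell^{-1}k})^{-1}$, which is nonzero exactly when $\ell^{-1}k=g'$, i.e.\ $k=\ell g'$. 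Together these two constraints leave exactly one surviving summand, at $\ell=g$ and $k=gg'$. That summand is a composition of isomorphisms: the $\mu^A$'s are isomorphisms, the image of an algebra isomorphism under the tensor autoequivalence $g$ remains an isomorphism, and $f_{e,g}$, $f'_{e,g'}$ are themselves algebra isomorphisms by Lemma \ref{lem:UniqueNonzeroComponent}(2). Hence $(f\circ f')_{e,gg'}\neq 0$, and re-applying Lemma \ref{lem:UniqueNonzeroComponent}(2) to $f\circ f'$ gives $\pi(f\circ f')=gg'=\pi(f)\pi(f')$. The identity axiom $\pi(\id_{I(A)})=e$ is read off directly from $(\id_{I(A)})_{h,k}=\delta_{h=k}\id_{h(A)}$.

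I do not expect a serious obstacle: Lemma \ref{lem:UniqueNonzeroComponent} has already done the conceptual work of localizing an automorphism of $I(A)$ onto a single ``diagonal'' of its component matrix, so what remains is support-tracking together with the observation that a composition of isomorphisms is an isomorphism. The one thing to watch is the index convention $f_{h,k}\colon k(A)\to h(A)$ from Lemma \ref{lem:UniqueNonzeroComponent}, which dictates the order of composition in the displayed sum above; once that orientation is fixed, the proof is a one-line calculation.
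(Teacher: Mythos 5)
Your proof is correct and follows essentially the same route as the paper: expand the $(e,k)$-component of $f\circ f'$, use the unique nonzero component of $f$ to kill all but the $\ell=\pi(f)$ term, and use Lemma~\ref{lem:UniqueNonzeroComponent}(1) to locate where the remaining factor is supported. The paper phrases the final step slightly differently (it starts from the unique $g$ with $(f^1\circ f^2)_{e,g}\neq 0$ guaranteed by Lemma~\ref{lem:UniqueNonzeroComponent}(2) and back-solves for $g$, rather than directly exhibiting the surviving summand as a composite of isomorphisms), but this is the same argument.
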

\begin{proof}
Suppose $f^1,f^2\in \Aut_{\cC^G}(I(A))$, and consider $f^1\circ f^2$. 
Then $\pi(f^1\circ f^2)$ is the unique element $g\in G$ such that $(f^1\circ f^2)_{e, g}\neq 0$. 
We calculate that
$$
(f^1\circ f^2)_{e, g}
=
\sum_{h\in G}f^1_{e,h}\circ f^2_{h,g}
=
f^1_{e,\pi(f^1)}\circ f^2_{\pi(f^1),g}.
$$
Now notice that $f^2_{\pi(f^1),g}\neq 0$ if and only if $f^2_{e, \iota(f^1)^{-1}g}\neq 0$.
Hence $\pi(f^1)^{-1}g=\pi(f^2)$, which immediately implies
$\pi(f^1\circ f^2)=g=\pi(f^1)\cdot \pi(f^2)$.
\end{proof}

\begin{lem}
\label{lem:SurjectiveGroupHom}
Fix $g\in G$.
Given an algebra isomorphism $\lambda^g : g(A) \to A$, there is a unique $f\in \Aut_{\cC^G}(I(A))$ such that $f_{e,g} = \lambda^g$.
The assignment $\lambda^g \mapsto f$ is a bijection between the set of algebra isomorphisms $g(A)\to A$ and the pre-image $\pi^{-1}(g)$.
\end{lem}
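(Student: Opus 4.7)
My plan is to define the map $\lambda^g \mapsto f$ using the formula that Lemma~\ref{lem:UniqueNonzeroComponent}(1) forces upon any $f \in \pi^{-1}(g)$, and then verify the required properties. Fix $g \in G$ and an algebra isomorphism $\lambda^g \colon g(A) \to A$. Since any $f \in \pi^{-1}(g)$ has $f_{e,g}$ as its unique nonzero component of the form $f_{e,-}$ by Proposition~\ref{prop:DefineSurjectiveGroupHom}, Lemma~\ref{lem:UniqueNonzeroComponent}(1) forces the components of $f$ to be
$$
f_{h,k} := \delta_{k=hg}\cdot h(\lambda^g) \circ (\mu^A_{h,g})^{-1}\colon k(A) \to h(A);
$$
I take this as the definition of $f$, which makes uniqueness automatic.

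Next I verify that this $f$ is a $G$-equivariant algebra automorphism of $I(A)$. Equivariance, i.e.\ $f \circ \nu^k = \nu^k \circ k(f)$ for all $k$, reduces on each nonzero component to the cocycle identity $\mu^A_{k, k^{-1}hg} \circ k(\mu^A_{k^{-1}h,g}) = \mu^A_{h,g} \circ \mu^{g(A)}_{k,k^{-1}h}$ obtained from associativity of $\mu$ at the triple $(k, k^{-1}h, g)$, together with naturality of $\mu_{k,k^{-1}h}$ at $\lambda^g$. Unit preservation $f \circ j = j$ follows from naturality of $\mu_{h,g}$ at $i\colon 1_\cC \to A$, strict unitality of the action (so $\mu^{1_\cC}_{h,g} = \id$), and the fact that $\lambda^g$ preserves units. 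Multiplicativity $f \circ n = n \circ (f \otimes f)$ is the heart of the calculation: after using naturality of $\mu_{h,g}$ at $m\colon A\otimes A \to A$ and the monoidal coherence relating $\psi^{hg}_{A,A}$ to $h(\psi^g_{A,A}) \circ \psi^h_{g(A),g(A)}$ via $\mu_{h,g}$, the identity collapses to $\lambda^g \circ g(m) \circ \psi^g_{A,A} = m \circ (\lambda^g \otimes \lambda^g)$, which is precisely the statement that $\lambda^g$ is an algebra morphism for the induced algebra structure on $g(A)$. Invertibility of $f$ is then clear, because $h \mapsto hg$ is a bijection of $G$ and each nonzero component $f_{h,hg}$ is a composition of isomorphisms.

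For the bijection with $\pi^{-1}(g)$, the assignment $\lambda^g \mapsto f$ constructed above has inverse $f \mapsto f_{e,g}$; Lemma~\ref{lem:UniqueNonzeroComponent}(2) guarantees that $f_{e,g}$ is an algebra isomorphism $g(A) \to A$, so the target is correct. One composition is automatic from strict unitality, since the $(e,g)$-component of my formula is $e(\lambda^g) \circ (\mu^A_{e,g})^{-1} = \lambda^g$. The other composition follows because both $f$ and the element built from $f_{e,g}$ lie in $\Aut_{\cC^G}(I(A))$ and share the same $(e,g)$-component, hence agree by Lemma~\ref{lem:UniqueNonzeroComponent}(1).

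I expect the multiplicativity check to be the main obstacle: all the ingredients are standard --- naturality of $\mu$, the monoidal coherence relating $\mu_{h,g}$ with the tensorators $\psi^?$, and the algebra homomorphism property of $\lambda^g$ --- but they must be chained in the correct order, particularly when pulling $(hg)(m)$ through $(\mu^A_{h,g})^{-1}$ and when rewriting $\psi^{hg}_{A,A}$ in terms of the iterated tensorator for $h \circ g$. Everything else is bookkeeping driven by the defining formula for $f$.
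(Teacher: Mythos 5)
Your proposal is correct and follows the same route as the paper: define $f$ componentwise by the formula $f_{h,k}=\delta_{k=hg}\cdot h(\lambda^g)\circ(\mu^A_{h,g})^{-1}$ forced by Lemma~\ref{lem:UniqueNonzeroComponent}, check that this $f$ is a morphism of $\cC^G$ and an algebra automorphism of $I(A)$, and then read off the bijection with $\pi^{-1}(g)$ from uniqueness. The one place you go beyond the paper's written proof is that you explicitly flag the $G$-equivariance of $f$ (i.e.\ $f\circ\nu^k=\nu^k\circ k(f)$) as something to verify, correctly identifying it as a consequence of the associativity axiom for $\mu$ together with naturality; the paper's proof only carries out the multiplicativity computation and leaves $G$-equivariance and unit preservation unstated or omitted. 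Conversely, where the paper writes out the full chain of equalities for compatibility with $n$, you only sketch the reduction to $\lambda^g\circ g(m)\circ\psi^g_{A,A}=m\circ(\lambda^g\otimes\lambda^g)$, which is the same key step. So the content is the same; the division of labor between ``spelled out'' and ``indicated'' is simply shifted.
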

\begin{proof}
Let $g\in G$, and let $\lambda^{g}: g(A)\mapsto A$ be an algebra isomorphism. 
If there is an $f\in \Aut_{\cC^G}(I(A))$ such that $f_{e,g}=  \lambda^{g} $, then by Lemma \ref{lem:UniqueNonzeroComponent}, for arbitrary $h,k\in G$, we must have 
\begin{equation}\label{section formula}
f_{h,k}:=
\delta_{k=hg}\cdot
h(\lambda^{g})\circ (\mu^A_{h,h^{-1}k})^{-1}.
\end{equation}
(Notice here that $\mu_{h,e}^A=\id_{h(A)}$ as discussed in Notation \ref{nota:CategoricalAction}.)
We claim defining $f$ in this way yields an algebra automorphism of $I(A)$.
To see that $f$ is compatible with multiplication map $n$ for $I(A)$, we compute
\begin{align*}
h(m)\circ \psi^{A,A}_{h}
&\circ 
(f_{h,k}\otimes f_{h,l})
\\&=
\delta_{k=l=hg}\,\, 
h(m)\circ \psi^{A,A}_{h}
\circ 
\left( h(\lambda^{g})
\circ 
(\mu^A_{h,h^{-1}k})^{-1} \otimes h( \lambda^{g})
\circ 
(\mu^A_{h,h^{-1}k})^{-1}\right) 
\\&=
\delta_{k=l=hg}\,\, 
h(m)
\circ 
\psi^{A,A}_{h}
\circ 
\left( h(\lambda^{g})\otimes h(\lambda^{g})\right)
\circ 
\left((\mu^A_{h,h^{-1}k})^{-1} \otimes (\mu^A_{h,h^{-1}k})^{-1}\right)
\\&=
\delta_{k=l=hg}\,\, 
h(m\circ ( \lambda^{g} \otimes \lambda^{g}))
\circ 
\psi^{g(A),g(A)}_{h}
\circ 
\left((\mu^A_{h,g})^{-1} \otimes (\mu^A_{h,g})^{-1}\right)
\\&=
\delta_{k=l=hg}\,\, 
h( \lambda^{g}\circ g(m))\circ h(\psi^{A,A}_{g})
\circ 
\psi^{g(A),g(A)}_{h}
\circ  
\left((\mu^A_{h,g})^{-1} \otimes (\mu^A_{h,g})^{-1}\right)
\\&=
\delta_{k=l}\delta_{k=hg}\,\, 
h(\lambda^{g})
\circ 
h(g(m))
\circ 
\psi^{A,A}_{h\circ g}
\circ
\left((\mu^A_{h,g})^{-1} \otimes (\mu^A_{h,g})^{-1}\right)
\\&=
\delta_{k=l}\delta_{k=hg}\,\, 
h(\lambda^{g})
\circ 
(\mu^{A}_{h,g})^{-1}
\circ 
hg(m)
\circ 
\psi^{A,A}_{hg}
\\&=
\delta_{k=l}\,\,
f_{h,k}
\circ
k(m)
\circ
\psi^{A,A}_{k}
\end{align*}
We omit the easier proof that $f$ is compatible with the unit map $j$ of $I(A)$.

Now since $f$ is completely determined by $f_{e,g}$, and $\pi(f)=g$ by definition of $\pi$, we see that the assignment $\lambda^g \mapsto f$ is a bijection.
\end{proof}

\begin{thm}
The sequence \eqref{eq:ShortExactSequence} is exact.
\end{thm}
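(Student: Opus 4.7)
The heavy lifting has already been done in Propositions \ref{prop:InjectiveGroupHom} and \ref{prop:DefineSurjectiveGroupHom} and Lemmas \ref{lem:UniqueNonzeroComponent} and \ref{lem:SurjectiveGroupHom}. My plan is simply to assemble these results and verify exactness at each of the three nontrivial positions of the sequence.

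\textbf{Injectivity of $\iota$:} This is already recorded in Proposition \ref{prop:InjectiveGroupHom}; indeed $\iota(f)_{e,e} = f$, so $f$ can be recovered from $\iota(f)$.

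\textbf{Surjectivity of $\pi$:} Fix $g\in G$. Since $A$ is assumed $G$-stable, there exists an algebra isomorphism $\lambda^g : g(A)\to A$. Apply Lemma \ref{lem:SurjectiveGroupHom} to produce $f\in \Aut_{\cC^G}(I(A))$ with $f_{e,g}=\lambda^g$; by the definition of $\pi$ in Proposition \ref{prop:DefineSurjectiveGroupHom}, this $f$ has $\pi(f)=g$.

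\textbf{Exactness at the middle term:} For the containment $\im(\iota)\subseteq \ker(\pi)$, any $f\in \Aut_\cC(A)$ has $\iota(f)_{e,g} = \delta_{e=g}\cdot g(f)$ which is nonzero only at $g=e$, hence $\pi(\iota(f))=e$. For the reverse containment, suppose $f\in \Aut_{\cC^G}(I(A))$ with $\pi(f)=e$. By Lemma \ref{lem:UniqueNonzeroComponent}(2), the unique nonzero component $f_{e,g}$ occurs at $g=e$, and $f_{e,e}:A\to A$ is an algebra automorphism of $A$. Lemma \ref{lem:UniqueNonzeroComponent}(1) then gives
\[
f_{h,k} = \mu^A_{h,e}\circ h(f_{e,h^{-1}k})\circ (\mu^A_{h,h^{-1}k})^{-1},
\]
and since $f_{e,h^{-1}k}=0$ unless $h^{-1}k=e$, this vanishes off the diagonal. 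On the diagonal, strict unitality of the action (see Notation \ref{nota:CategoricalAction}) gives $\mu^A_{h,e}=\id_{h(A)}$, so $f_{h,h} = h(f_{e,e})= \iota(f_{e,e})_{h,h}$. Thus $f = \iota(f_{e,e})\in \im(\iota)$.

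The only subtle ingredient is the use of strict unitality to collapse $\mu^A_{h,e}$ to the identity in the last step; without it one would still get $f$ equal to $\iota(f_{e,e})$, but the identification would require a conjugation by the unitors rather than literal equality. Beyond this bookkeeping issue, the argument is a direct diagram chase assembling the three preceding lemmas, so I expect no real obstacle.
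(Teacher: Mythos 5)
Your proof is correct and follows exactly the same route as the paper's: injectivity from Proposition \ref{prop:InjectiveGroupHom}, surjectivity from Lemma \ref{lem:SurjectiveGroupHom}, and exactness at the middle term via Lemma \ref{lem:UniqueNonzeroComponent}. You spell out the identification $f=\iota(f_{e,e})$ and the role of strict unitality in collapsing $\mu^A_{h,e}$ a bit more explicitly than the paper does, but the argument is the same.
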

\begin{proof}
By Proposition \ref{prop:InjectiveGroupHom}, $\iota$ is injective.
By Lemma \ref{lem:SurjectiveGroupHom}, $\pi$ is surjective.
It remains to show that $\ker(\pi)=\iota(\Aut_{\mathcal{C}}(A))$. 
Suppose $f\in \ker(\pi)$. 
By Lemma \ref{lem:UniqueNonzeroComponent}, $f$ is determined by translations of the algebra automorphism $f_{e,e}: A \to A$ in $\Aut_\cC(A)$. 
Thus $f=\iota(f_{e,e})$, and we are finished.
\end{proof}

%%%%%%%%%%%%%%%%%%%%%%%%%%%%%%%%%%%%%%%%%%%%%%%%%%%%%%%%%%%%%%
\subsection{Exact sequences of groups and cohomology}\label{sec:exactsequence}

We now include a short aside to discuss splitting of exact sequences and its relation to cohomology.
All this material is well-known, and we refer the reader to \cite{MR672956} for more details.

Consider a short exact sequence of groups
\begin{equation}
\label{eq:ShortExactSequenceOfGroups}
\begin{tikzcd}
1 \arrow[r] 
& 
N \arrow[r, "\iota"] 
& 
E \arrow[r, "\pi"] 
& 
G \arrow[r]
& 
1.
\end{tikzcd}
\end{equation}
Such a short exact sequence induces a homomorphism $\alpha: G\rightarrow \Out(N)$. 
Now starting with a homomorphism $\alpha: G \to \Out(N)$, it is natural to ask what are the possible extensions of $G$ by $N$ that induce $\alpha$. 
There may be no such extensions, but if one exists, the set of all possible extensions forms a torsor over the group $H^{2}(G, Z(N),\alpha)$.

Recall that a \emph{splitting} for the exact sequence \eqref{eq:ShortExactSequenceOfGroups}
\begin{equation}
\label{eq:SplittingsOfSES}
\begin{tikzcd}
1 \arrow[r] 
& 
N \arrow[r, "\iota"] 
& 
E \arrow[r, "\pi"] 
& 
G \arrow[r] \arrow[l, "\sigma", dashed, bend left] 
& 
1
\end{tikzcd}
\end{equation}
is a group homomorphism $\sigma: G \to E$ such that $\pi \circ \sigma = \id_G$, which is equivalent to the existence of a group isomorphism $E \cong N \rtimes G$ for some action of $G$ on $N$.
Given a splitting $\sigma: G \to E$ and $n\in N$, we get another splitting by defining $\sigma_n(g):=n \sigma(g)n^{-1}$.
Hence $N$ acts on the set of splittings \eqref{eq:SplittingsOfSES} by conjugation.
We call two splittings \emph{equivalent} if they are in the same $N$-conjugacy class.

When $N$ is abelian, $\Out(N) = \Aut(N)$, and thus there exists a canonical extension associated to any $\alpha: G \to \Aut(N)$, namely the semi-direct product.
This canonical extension is a canonical basepoint for our torsor, and thus extensions inducing $\alpha$ are classified by the \emph{group} $H^{2}(G, N,\alpha)$. 
Moreover, the extension is split if and only if the assocaited 2-cocycle $\omega \in H^2(G, N,\alpha)$ is trivial, and splittings exactly correspond to functions $\gamma: G \to N$ satisfying $\gamma(g) \alpha_{g}(\gamma(h))=\gamma(gh)$, i.e., $\gamma$ is a $1$-cocycle in $Z^1(G,N,\alpha)$.
Two splittings are then equivalent if and only if they are cohomologous
\cite[Ch.~IV, Prop.~2.3]{MR672956}.

%%%%%%%%%%%%%%%%%%%%%%%%%%%%%%%%%%%%%%%%%%%%%%%%%%%%%%%%%%%%%%
\subsection{The splitting obstruction and \texorpdfstring{$G$}{G}-equivariance}

We now prove our main theorem for this section.

\begin{thm}
The data of a splitting for the exact sequence \eqref{eq:ShortExactSequence} is equivalent to the data of a $G$-equivariant algebra structure for $A$.
\end{thm}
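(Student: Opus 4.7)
The plan is to leverage Lemma \ref{lem:SurjectiveGroupHom}, which already establishes a bijection, for each $g \in G$, between algebra isomorphisms $\lambda^g : g(A) \to A$ and elements of the fibre $\pi^{-1}(g) \subset \Aut_{\cC^G}(I(A))$. A splitting $\sigma : G \to \Aut_{\cC^G}(I(A))$ picks out one element in each fibre, and hence, via this bijection, one algebra isomorphism $\lambda^g := \sigma(g)_{e,g} : g(A) \to A$ per group element. All that remains is to check that the multiplicativity axiom for $\sigma$ matches the coherence diagram \eqref{eq:EquivariantObject} defining a $G$-equivariant algebra structure.

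To carry this out, I would first observe that strict unitality of the categorical action (Notation \ref{nota:CategoricalAction}) forces $\sigma(e) = \id_{I(A)}$, so that $\lambda^e = \id_A$ in agreement with Remark \ref{rem:StrictlyUnitalEquivariantStructure}. Next, using formula \eqref{section formula} and $\mu^A_{e,g} = \id_{g(A)}$, I would compute the only nonzero $(e, gh)$-component of the composition:
$$
(\sigma(g) \circ \sigma(h))_{e, gh}
= \sigma(g)_{e,g} \circ \sigma(h)_{g, gh}
= \lambda^g \circ g(\lambda^h) \circ (\mu^A_{g,h})^{-1},
$$
while $\sigma(gh)_{e,gh} = \lambda^{gh}$. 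Since by Lemma \ref{lem:UniqueNonzeroComponent} an element of $\Aut_{\cC^G}(I(A))$ is determined by its $(e, \bullet)$-components, the identity $\sigma(g)\circ \sigma(h) = \sigma(gh)$ is equivalent to
$$
\lambda^{gh} \circ \mu^A_{g,h} = \lambda^g \circ g(\lambda^h),
$$
which is exactly the commutativity of diagram \eqref{eq:EquivariantObject}.

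For the converse, given a $G$-equivariant algebra structure $\lambda = \{\lambda^g\}_{g \in G}$, I would invoke Lemma \ref{lem:SurjectiveGroupHom} to define $\sigma(g)$ as the unique element of $\pi^{-1}(g)$ whose $(e,g)$-component is $\lambda^g$. The calculation above, read in reverse, shows that the equivariance axiom \eqref{eq:EquivariantObject} implies $\sigma(g)\circ\sigma(h) = \sigma(gh)$; strict unitality gives $\sigma(e) = \id_{I(A)}$; and $\pi \circ \sigma = \id_G$ holds by construction. Thus $\sigma$ is a splitting. Since the two assignments $\sigma \mapsto \lambda$ and $\lambda \mapsto \sigma$ are mutually inverse on the level of $(e,g)$-components, they are mutually inverse bijections.

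The main bookkeeping difficulty lies in carefully tracking the tensorator isomorphisms $\mu^A_{g,h}$ and the monoidal coherence isomorphisms $\psi^g$ that appear in the definition of $I(A)$; the computational content is already concentrated in formula \eqref{section formula} and the proof of Lemma \ref{lem:SurjectiveGroupHom}, so no new coherence check is needed beyond translating the single identity displayed above between its two formulations.
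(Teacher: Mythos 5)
Your proof is correct and follows essentially the same strategy as the paper: both define $\lambda^g := \sigma(g)_{e,g}$ using Lemma~\ref{lem:SurjectiveGroupHom} and then show that multiplicativity of $\sigma$ in the component form given by \eqref{section formula} is equivalent to the coherence square \eqref{eq:EquivariantObject}. Where you diverge is only in the bookkeeping: the paper first writes out the full $(i,k)$-component equation $\sum_j \sigma(g)_{i,j}\circ\sigma(h)_{j,k}=\sigma(gh)_{i,k}$, obtains the three-parameter identity \eqref{eq:EquivariantAlgebraFromSplitting2} indexed by $g,h,k$, and then specializes $h=k$ and manipulates with $\mu^A$ to recover \eqref{eq:EquivariantObject}; you instead invoke Lemma~\ref{lem:UniqueNonzeroComponent}(1) up front to reduce immediately to the $(e,\bullet)$-components, so that the only constraint left is the single two-variable identity $\lambda^g\circ g(\lambda^h)=\lambda^{gh}\circ\mu^A_{g,h}$, which is literally \eqref{eq:EquivariantObject}. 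This is a modest but genuine simplification of the paper's computation, eliminating the ``set $h=k$'' detour. One small imprecision: you attribute $\sigma(e)=\id_{I(A)}$ to strict unitality of the categorical action, but this already follows from $\sigma$ being a group homomorphism (and in the converse direction from the idempotent $\sigma(e)\circ\sigma(e)=\sigma(e)$); strict unitality is what guarantees $\lambda^e=\id_A$ for a $G$-equivariant structure as in Remark~\ref{rem:StrictlyUnitalEquivariantStructure}, and what makes $\sigma(g)_{e,g}=\lambda^g$ without an extra $\mu^A_{e,g}$ factor. The conclusion is unaffected.
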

\begin{proof}
Suppose $\sigma: G \to \Aut_{\cC^G}(I(A))$ is a section.
Then for each $g\in G$, $\sigma(g) \in \pi^{-1}(g)$, and $\sigma(g)$ is completely determined by the algebra isomorphism $\lambda^g:=\sigma(g)_{e,g} : g(A) \to A$ by Lemma \ref{lem:SurjectiveGroupHom}.

We now look at the equation $\sigma(g)\circ \sigma(h) = \sigma(gh) \in \Aut_{\cC^G}(I(A))$, which in components is
$\sum_{j\in G} \sigma(g)_{i,j} \circ \sigma(h)_{j,k} = \sigma(gh)_{i,k}$.
By the definition of the multiplication morphism $n$ for $I(A)$, both sides are zero unless $i=kh^{-1}g^{-1}$, and thus this equation is equivalent to
\begin{equation}
\label{eq:EquivariantAlgebraFromSplitting1}
\sigma(g)_{kh^{-1}g^{-1},kh^{-1}}
\circ 
\sigma(h)_{kh^{-1},k}
=
\sigma(gh)_{kh^{-1}g^{-1},k}.
\end{equation}
Using the formula \eqref{section formula}, we can rewrite
\begin{align*}
\sigma(g)_{kh^{-1}g^{-1},k}
&=
kh^{-1}g^{-1}(\lambda^{g})\circ (\mu^A_{kh^{-1}g^{-1},g})^{-1}
\\
\sigma(h)_{kh^{-1},k}
&=
kh^{-1}(\lambda^{h})\circ (\mu^A_{kh^{-1},h})^{-1}
\\
\sigma(gh)_{kh^{-1}g^{-1},k}
&=
kh^{-1}g^{-1}(\lambda^{gh})\circ (\mu^A_{kh^{-1}g^{-1},gh})^{-1}.
\end{align*}
Thus \eqref{eq:EquivariantAlgebraFromSplitting1} is equivalent to the following equation in terms of $\lambda=\{\lambda^g: g(A) \to A\}_{g\in G}$:
\begin{equation}
\label{eq:EquivariantAlgebraFromSplitting2}
kh^{-1}g^{-1}(\lambda^{g})\circ (\mu^A_{kh^{-1}g^{-1},g})^{-1}\circ kh^{-1}(\lambda^{h})\circ (\mu^A_{kh^{-1},h})^{-1}
= 
kh^{-1}g^{-1}(\lambda^{gh})\circ (\mu^A_{kh^{-1}g^{-1},gh})^{-1}.
\end{equation}

We claim that \eqref{eq:EquivariantAlgebraFromSplitting2} holds for all $g,h,k\in G$ if and only if $(A,m,i,\lambda)$ is a $G$-equivariant algebra.
For the forward direction, if \eqref{eq:EquivariantAlgebraFromSplitting2} holds, then setting $h=k$, we obtain for all $g,h\in G$,
$$
g^{-1}(\lambda^{g})\circ (\mu^A_{g^{-1},g})^{-1}\circ \lambda^{h}
= 
g^{-1}(\lambda^{gh})\circ (\mu^A_{g^{-1},gh})^{-1}.
$$
Now applying $g$ to both sides, post-composing with $\mu^A_{g,g^{-1}}$, and simplifying gives the $G$-equivariance condition \eqref{eq:EquivariantObject}.
Conversely, if $\lambda_g:g(A) \to A$ are a family of algebra isomorphisms making $(A,m,i,\lambda)$ a $G$-equivariant algebra,
we apply $kh^{-1}g^{-1}$ to \eqref{eq:EquivariantObject} and pre-compose with $(\mu^A_{kh^{-1}g^{-1},g})^{-1}$.
Again, simplifying gives \eqref{eq:EquivariantAlgebraFromSplitting2}.

We have thus proven that our section $\sigma$ is a splitting if and only if $\lambda^g := \sigma(g)_{e,g}: g(A) \to A$ defines a $G$-equivariant algebra structure for $A$.
\end{proof}

\begin{rem}
Viewing the splitting of the exact sequence \eqref{eq:ShortExactSequence} as an obstruction bears many similarities to the results of \cite{MR2927179}. 
There, the authors investigate module categories over $G$-graded fusion categories, and they quickly reduce their problem to studying $G$-equivariant module categories. 
The 2-category of modules, module functors, and natural transformations is 2-equivalent to the 2-category of algebras, bimodules and intertwiners, so unsurprisingly, they obtain an obstruction for $G$-equivariant module categories which is the splitting of their short exact sequence (8.1). 

Our situation differs as a truncation of theirs in the following sense. 
Since we are motivated by the physics of anyon condensation, we are effectively considering \textit{module tensor categories} with \emph{basepoints} \cite{MR3578212} over the non-degenerate braided fusion category $\cC$. 
Indeed, given a connected \'{e}tale algebra $A\in \cC$, we get a fully faithful braided tensor functor
$\Phi^{\scriptscriptstyle\cZ}: \cC \to \cZ(\cC_A)\cong \cC \boxtimes \overline{\cC_A^{\loc}}$,  where the bar denotes taking the opposite braiding \cite[Cor.~3.30]{MR3039775}.
The functor $\Phi^{\scriptscriptstyle\cZ}$ equips the tensor category $\cC_A$ with the structure of a module tensor category over $\cC$ with simple basepoint $A\in \cC_A$.
% $$
% \begin{tikzcd}
% \cC 
% \ar[hook]{rr}{\Phi^{\scriptscriptstyle\cZ}}
% \ar[swap]{drr}{\Phi}
% &&
% \cZ(\cC_A) \cong \cC \boxtimes \overline{\cC_A^{\loc}}
% \ar[]{d}{F}
% \\
% &&
% \cC_A
% \end{tikzcd}
% $$

Now pointed module tensor functors between module tensor categories with basepoint do not have an additional layer of structure, as there is at most one natural transformation between any two which is necessarily a natural isomorphism \cite[Lem.~3.5]{1607.06041}.
Thus we may safely truncate to the $1$-category of connected \'{e}tale algebras and algebra morphisms without losing any information. 

As a 1-category has one fewer layer of structure, we have one fewer obstruction. 
However, our obstruction is not simply a special case of theirs, since the group of automorphisms of an algebra (on which our sequence is based) and the group of equivalence classes of invertible bimodules (on which their exact sequence is based) are not the same in general.
\end{rem}

\section{Equivariant algebras and induced categorical actions}

In this section, we show how given a categorical action of $\uG$ on $\cC$ together with a $G$-equivariant structure $\lambda$ on an algebra $A\in \cC$, we get a canonical induced categorical action on the de-equivariantized or condensed theory.
We do this in three different stages.

First, in \S\ref{sec:InducedActionForCommutativeCentralAlgebra} below, we consider a fusion category $\cC$, a connected separable algebra object $A\in \cC$ which lifts to a commutative (and thus \'{e}tale) algebra object $(A,e_A)\in \cZ(\cC)$, and a \emph{central} $G$-equivariant structure $\lambda$ on $A$, i.e., $\lambda^g \in \cZ(\cC)(g(A) \to A)$ for all $g\in G$.
We then show we get a canonical induced categorical action $\uG \to \uEqTens(\cC_A)$.

Second, in \S\ref{sec:InducedActionForEtaleAlgebra} below, we consider a non-degenerately braided fusion category $\cC$, a connected \'{e}tale algebra object $A\in \cC$, and a $G$-equivariant structure $\lambda$ on $A$.
We then show we get a canonical induced categorical action $\uG \to \uEqBr(\cC_A^\loc)$.

Third, in \S\ref{sec:InducedActionForGCrossed}, we combine the above two situations and consider the case of a connected \'{e}tale algebra in a non-degenerately braided fusion category $\cC$, together with an action $\uG\rightarrow \uEqBr(\cC)$ and a compatible $G$-crossed braided extension $\cC\subseteq \cF$. 
We apply the earlier results to show that for each $G$-equivariant algebra structure $(A, \lambda)$, there exists a canonical $G$-crossed braided extension $\cC^{\loc}_{A}\subseteq \cE$, compatible with the action $\uG \to \uEqBr(\cC_A^\loc)$ constructed in \S\ref{sec:InducedActionForEtaleAlgebra}. 
We also demonstrate the compatibility of $\cE$ and $\cF$ by showing that gauging and condensation commute (see Theorem \ref{thm:GaugingCommutes}).

%%%%%%%%%%%%%%%%%%%%%%%%%%%%%%%%%%%%%%%%%%%%%%%%%%%%%%%%%%%%%%%
\subsection{The induced action on \texorpdfstring{$\cC_A$}{Mod-A}}
\label{sec:InducedActionForCommutativeCentralAlgebra}

We now show that given a separable algebra object $A\in \cC$ which lifts to a commutative (and thus \'{e}tale) algebra object $(A,e_A)\in \cZ(\cC)$, together with a \emph{central} $G$-equivariant structure $\lambda$ on $A$, 
a categorical actiom
$(\underline{\rho},\mu):\underline{G}\rightarrow \uEqTens(\mathcal{C})$
induces a canonical categorical action
$(\underline{\rho}_A,\mu):\underline{G}\rightarrow \uEqTens(\mathcal{C}_A)$.
First, we define a number of categorical groups and describe how they are related.

\begin{defn}
\label{defn:ExamplesOfCategoricalGroups}
We define the following groups and categorical groups:
\begin{itemize}
\item 
$\uEqStab(A)\subset \uEqTens(\cC)$ is the full categorical subgroup whose 1-morphisms are $\alpha\in \uEqTens(\cC)$ such that $\alpha(A)\cong A$ as algebras
with composition of 1-morphisms from $\uEqTens(\cC)$,
and whose 2-morphisms are monoidal natural isomorphisms.
\item
$\EqStab(A)$ is the group truncation (or $\pi_{1}$) of $\uEqStab(A)$, whose elements are equivalence classes of $\alpha \in \EqTens(\cC)$ such that $\alpha(A) \cong A$.
\item
$\Aut_\cC(A)$ is the group of algebra isomorphisms $\lambda: A\to A$.
\item 
%\nn{change later $\uEqAut$.}
$\uEqTens(\cC|A)$ is the categorical 1-group whose 1-morphisms are triples $(\alpha,\psi^\alpha, \lambda^\alpha)$ with $(\alpha,\psi^\alpha) \in \uEqTens(\cC)$ %(we suppress the tensorator of $\alpha$ whenever possible) 
and $\lambda^\alpha: \alpha(A) \to A$ is an algebra isomorphism
with \emph{strictly associative} composition of 1-morphisms given by 
$$
(\alpha,\psi^\alpha, \lambda^\alpha)\circ (\gamma, \psi^\gamma, \lambda^\gamma) 
:= 
(\alpha \circ \gamma 
, 
\alpha(\psi^\gamma) \circ \psi^\alpha
, 
\lambda^\alpha \circ \alpha(\lambda^\gamma)),
$$ 
and whose 2-morphisms
$\eta: (\alpha,\psi^\alpha, \lambda^\alpha)\Rightarrow (\gamma,\psi^\gamma, \lambda^\gamma)$ are monoidal natural isomorphisms $\eta: (\alpha, \psi^\alpha) \Rightarrow (\gamma, \psi^\gamma)$ such that $\lambda^\gamma\circ \eta_A = \lambda^\alpha$.
\item
$\EqTens(\cC|A)$ is the truncated group, whose elements are equivalence classes of pairs $(\alpha, \lambda^\alpha)$ as above.
We denote the equivalence class of $(\alpha, \lambda^\alpha)$ by $[\alpha,\lambda^\alpha]$.
\end{itemize}
\end{defn}

Recall from \cite[Prop.~4.14.3]{MR3242743} that $\pi_{2}(\uEqTens(\cC)):=\Aut_\otimes(\id_\cC)\cong \Hom(\cG \to \bbC^\times)$, where $\cG$ is the universal grading group of the fusion category $\cC$.
Since $\uEqStab(A)\subseteq \uEqTens(\cC)$ is a full categorical 1-subgroup, it has the same $\pi_{2}$. 

We say a character $\gamma\in \Hom(\cG \to \bbC^\times)$ \emph{trivializes} the object $a\in \cC$ if $\gamma(g)=1$ for any homogenous $g$-graded subobject $b_{g}\subset a$. 
Under the isomorphism $\pi_{2}(\uEqTens(\cC))\cong \Hom(\cG \to \bbC^\times)$, it is easy to verify that 
$$
\pi_{2}(\uEqTens(\cC|A))
\cong
\set{\gamma\in  \Hom(\cG \to \bbC^\times)}{\gamma \text{ trivializes } A}.
$$
The following result is analogous to \cite[Cor.~3.7]{MR3354332} in the presence of an algebra object $A\in \cC$.

\begin{prop}
\label{prop:DecategorifiedExactSequenceNonBraided}
Let
$$
\begin{tikzcd}%[column sep=tiny]
%\underline{*}
\uEqTens(\cC|A)
\ar[]{r}{\underline{F}}
&
\uEqStab(A)
%\ar[]{r}{}
%&
%\underline{*}
\end{tikzcd}
$$
be the forgetful monoidal functor $(\alpha, \lambda^\alpha)\mapsto \alpha$. 
There is a decategorified exact sequence of groups
$$
\begin{tikzcd}[column sep=small]
1
\ar[]{r}
&
\pi_{2}(\uEqTens(\cC|A))
\ar[hook]{r}{}
&
\Hom(\cG \to \bbC^\times)
\ar[]{r}{}
&
\Aut_\cC(A)
\ar[]{rr}{[\id_\cC, \,\cdot\,]}
&&
\EqTens(\cC|A)
\ar[]{r}{F}
&
\EqStab(A)
\ar[]{r}{}
&
1.
\end{tikzcd}
$$
\end{prop}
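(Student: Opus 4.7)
The plan is to construct each map explicitly and then verify exactness at each of the four interior spots, with surjectivity at the final spot being immediate from the definition of $\EqStab(A)$.

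First, I would define the maps. The leftmost inclusion is by definition, using the identification $\pi_2(\uEqTens(\cC|A)) \cong \{\gamma \in \Hom(\cG,\bbC^\times) : \gamma \text{ trivializes } A\}$ noted above. The map $\Hom(\cG,\bbC^\times) \cong \pi_2(\uEqTens(\cC)) \to \Aut_\cC(A)$ sends a monoidal natural automorphism $\gamma: \id_\cC \Rightarrow \id_\cC$ to its component $\gamma_A : A\to A$, which is an algebra automorphism because $\gamma$ is monoidal (so it commutes with $m$ and $i$). The map $\Aut_\cC(A) \to \EqTens(\cC|A)$ is $\mu \mapsto [\id_\cC, \mu]$, where $\mu$ is now viewed as an algebra isomorphism $\id_\cC(A) = A \to A$. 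Finally $F$ is the truncation of $\underline{F}$, which is well-defined on equivalence classes since 2-isomorphisms in $\uEqTens(\cC|A)$ are a restriction of 2-isomorphisms in $\uEqTens(\cC)$.

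Next I verify exactness spot by spot. At $\Hom(\cG,\bbC^\times)$: under the isomorphism $\pi_2(\uEqTens(\cC))\cong \Hom(\cG \to \bbC^\times)$, a character $\gamma$ acts on a $g$-homogeneous component $b_g\subset a$ by the scalar $\gamma(g)$. Hence $\gamma_A = \id_A$ if and only if $\gamma(g) = 1$ on every $g$-graded component of $A$, i.e.\ if and only if $\gamma$ trivializes $A$, which is exactly the image of the first inclusion. At $\Aut_\cC(A)$: by the definition of 2-morphism in $\uEqTens(\cC|A)$, we have $[\id_\cC,\mu] = [\id_\cC,\id_A]$ iff there is a monoidal natural automorphism $\eta: \id_\cC \Rightarrow \id_\cC$ with $\id_A \circ \eta_A = \mu$, i.e.\ $\eta_A = \mu$; so the kernel of $[\id_\cC,\,\cdot\,]$ is exactly the image of $\gamma \mapsto \gamma_A$. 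At $\EqTens(\cC|A)$: if $F[\alpha,\lambda^\alpha]$ is trivial in $\EqStab(A)$, pick a monoidal natural isomorphism $\eta: \id_\cC \Rightarrow \alpha$. Then $\eta_A : A \to \alpha(A)$ is an algebra isomorphism (monoidality of $\eta$), so $\mu := \lambda^\alpha \circ \eta_A \in \Aut_\cC(A)$, and $\eta$ witnesses the equivalence $[\alpha,\lambda^\alpha] = [\id_\cC,\mu]$ in $\EqTens(\cC|A)$; conversely $F \circ [\id_\cC,\,\cdot\,]$ is obviously trivial. Surjectivity of $F$ follows because $[\alpha] \in \EqStab(A)$ means some algebra isomorphism $\lambda^\alpha: \alpha(A)\to A$ exists, so $[\alpha,\lambda^\alpha]$ is a preimage.

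The one potentially delicate point is checking that each of the maps is genuinely a \emph{group} homomorphism on equivalence classes, in particular the map $\Aut_\cC(A)\to \EqTens(\cC|A)$ and the exactness argument at $\EqTens(\cC|A)$, where one must track composition in $\uEqTens(\cC|A)$ which carries an extra $\alpha(\lambda^\gamma)$ twist. For $\mu_1,\mu_2 \in \Aut_\cC(A)$, the rule $(\id_\cC,\mu_1)\circ (\id_\cC,\mu_2) = (\id_\cC, \mu_1\circ \id_\cC(\mu_2)) = (\id_\cC, \mu_1\circ \mu_2)$ shows multiplicativity; similarly in the exactness at $\EqTens(\cC|A)$, one must check that the 2-morphism $\eta$ chosen really does intertwine $\lambda^\alpha$ with $\mu$ in the sense prescribed by Definition \ref{defn:ExamplesOfCategoricalGroups}, but this is forced by the defining equation $\lambda^\gamma\circ \eta_A = \lambda^\alpha$ once $\gamma = \id_\cC$ and $\lambda^\gamma = \mu$. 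Everything else is routine bookkeeping.
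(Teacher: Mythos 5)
Your proposal is correct and follows essentially the same approach as the paper's proof: identifying the maps explicitly, using the canonical isomorphism $\Aut_\otimes(\id_\cC)\cong\Hom(\cG\to\bbC^\times)$ from \cite[Prop.~4.14.3]{MR3242743}, and checking exactness spot by spot. The only (cosmetic) difference is that you phrase the middle map as "evaluate the monoidal natural automorphism at $A$" rather than the paper's explicit formula $\lambda_\gamma=\sum_{a\prec A}\gamma(\operatorname{gr}(a))\id_a$, and you supply slightly more detail than the paper at the step showing $\ker F = \operatorname{im}[\id_\cC,\,\cdot\,]$; both are equivalent.
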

\begin{proof}
 The first injection is described right before the proposition. 
Given a character of the universal grading group $\gamma\in \Hom(\cG \to \bbC^\times)$, we define an automorphism of the object $A\in \cC$ by $\lambda_\gamma := \sum_{a\prec A}\gamma(\operatorname{gr}(a))\id_{a}$, where $\operatorname{gr}(a)\in \cG$ is the group element grading the simple object $a\in \Irr(\cC)$ appearing in the direct sum decomposition of $c$. 
Since $\operatorname{gr}(a)\operatorname{gr}(b) = \operatorname{gr}(c)$ for any subobject $c\prec a\otimes b$, $\lambda_\gamma$ is an algebra automorphism. 
Furthermore, characters which act trivially are obviously identified with $\pi_{2}(\uEqTens(\cC|A))$ as above. 
 
The map $[\id_\cC, \,\cdot\,]: \Aut_{\cC}(A)\rightarrow \EqTens(\cC|A)$ sends $\psi\in \Aut_{C}(A)$ to $[\id_{\cC},\psi]$. This is trivial precisely if we have a monoidal natural automorphism of the identity functor $\eta:\id_{\cC}\rightarrow \id_{\cC}$ such that $\eta_{A}=\psi$. 
But recall there is a canonical isomorphism between $\Aut_\otimes(\id_\cC)\cong \Hom(\cG \to \bbC^\times)$ by \cite[Prop. 4.14.3]{MR3242743}.
%Explicitly, given a character of teh grading group $\gamma$, a monoidal natural automorphism is given by $\nu^{\gamma}_{a}:=\gamma(g_a)id_{a}$. 
Thus the kernel of $[\id_\cC, \,\cdot\,]$ is precisely the set of automorphisms that are restrictions of the characters of the universal grading group. 

Finally, the kernel of the forgetful functor $F$ is exactly the image of $\Aut_{\cC}(A)$, and $F$ is surjective by definition of $\EqStab(A)$.
\end{proof}

\begin{defn}
\label{defn:SecondLevelCanonicalSurjection}
We now define a functor of categorical 1-groups
$\underline{F}_A:\uEqTens(\cC|A) \to \uEq(\cC_A)$.
First, given $(\alpha,\psi^\alpha, \lambda^\alpha)\in \uEqTens(\cC|A)$, we define $\underline{F}_A(\alpha,\psi^\alpha, \lambda) \in \uEq(\cC_A)$ as follows.
Given a right $A$-module $(M, r: M \otimes A \to A)$, the object $\alpha(M)\in \cC$ carries the structure of a right $A$-module via 
\begin{equation}
\label{eq:TwistRightAModuleByAut}
\alpha(r)\circ \psi^\alpha_{M, A}\circ (\id_{\alpha(M)}\otimes (\lambda^\alpha)^{-1}).
\end{equation}
It is straightforward to verify that for $f\in \cC_A(M\to N)$, $\alpha(f)\in \cC_A(\alpha(M) \to \alpha(N))$.
Hence $\underline{F}_A(\alpha, \psi^\alpha, \lambda^\alpha)$ is a well-defined object in $\uEq(\cC_A)$.
Notice that if $\eta: (\alpha, \psi^\alpha, \lambda^\alpha) \Rightarrow (\gamma, \psi^\gamma, \lambda^\gamma)$ and $(M,r)\in \cC_A$, then $\eta_M : \alpha(M) \to \gamma(M)$ is an $A$-module map.
Hence we may define 
$\underline{F}_A(\eta)_{(M,r)}:= \underline{F}_A(\eta_M)$.
Finally, it straightforward to show that $\underline{F}_A$ is a strict monoidal functor under the above definitions.
\end{defn}

\begin{defn}
\label{defn:FAInPresenceOfHalfBraiding}
Now suppose the connected separable algebra $A$ lifts to a commutative algebra $(A, e_A)\in \cZ(\cC)$.
Given $(\alpha ,\psi^\alpha)\in \uEqTens(\cC)$ and $(A,e_A)\in \cZ(\cC)$, there is a canonical lift of $\alpha(A)$ to $\cZ(\cC)$ given by
\begin{equation}
\label{eq:HalfBraidingForAlphaA}
e_{\alpha(A),\alpha(c)} 
:=
(\psi^{\alpha}_{c,A})^{-1}\circ \alpha(e_{A,c})\circ \psi^\alpha_{A,c}.
\end{equation}
Moreover, the multiplication $m_{\alpha(A)}:= \alpha(m_A)\circ\psi^\alpha_{A,A}$ and unit $i_{\alpha(A)}:= \alpha(i_A)$ are automatically morphisms in $\cZ(\cC)$ using the half-braiding $e_{\alpha(A)}$ from \eqref{eq:HalfBraidingForAlphaA}.

We define:
\begin{itemize}
\item
$\uEqStab(A,e_A)\subset \uEqStab(A)$ is the full categorical subgroup whose 1-morphisms $\alpha\in \uEqTens(\cC)$ satisfy that there exists an algebra isomorphism $\lambda: \cZ(\cC)(\alpha(A)\to A)$.
Notice that $\lambda$ being a morphism in $Z(\cC)$ is equivalent to
\begin{equation}
\label{eq:HalfBraidingCompatibility}
e_{\alpha(A),\alpha(c)}
=
(\id_{\alpha(c)} \otimes \lambda^{-1})\circ e_{A,c}\circ (\lambda \otimes \id_{\alpha(c)})
\qquad\qquad
\forall c\in \cC.
\end{equation}

\item 
$\uEqTens(\cC|A,e_A)\subset \uEqTens(\cC|A)$ is the full categorical subgroup whose 1-morphisms $(\alpha,\psi^\alpha, \lambda^\alpha)$ are such that $\lambda^\alpha \in \cZ(\cC)(\alpha(A) \to A)$ is an algebra isomorphism.
\end{itemize}

We now define the underlying functor $\underline{F}_A^\cZ(\alpha, \psi^\alpha, \lambda^\alpha)\in \uEq(\cC_A)$ as in Definition \ref{defn:SecondLevelCanonicalSurjection}.
\end{defn}

\begin{lem}
\label{lem:TensoratorCompatibilityWithSeparabilityIdempotent}
The tensorator $\psi^\alpha$ is compatible with the separability idempotents \eqref{eq:SeparabilityIdempotent}
$$
\psi^\alpha_{M,N} \circ p_{\alpha(M), \alpha(N)} 
= 
\alpha(p_{M,N}) \circ \psi^\alpha_{M,N}
\qquad\qquad
\forall M,N\in \cC_A.
$$
Thus $\psi^\alpha$ endows $\underline{F}_A^\cZ(\alpha, \psi^\alpha, \lambda^\alpha)$ with the structure of a tensor functor in $\uEqTens(\cC_A)$.
\end{lem}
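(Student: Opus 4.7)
The plan is to verify the identity by a direct morphism chase, expanding $p_{M,N}$ and $p_{\alpha(M),\alpha(N)}$ via the explicit formula \eqref{eq:SeparabilityIdempotent} and absorbing every instance of $\lambda^\alpha$ using the three hypotheses on $(\alpha,\psi^\alpha,\lambda^\alpha)$: (i) $\psi^\alpha$ is a monoidal tensorator (so it is natural and respects composition/tensor product), (ii) $\lambda^\alpha$ is an algebra isomorphism between the algebras $(\alpha(A),\alpha(m_A)\circ \psi^\alpha_{A,A},\alpha(i_A))$ and $(A,m_A,i_A)$, and (iii) $\lambda^\alpha$ is a morphism in $\cZ(\cC)$, encoded as \eqref{eq:HalfBraidingCompatibility}.

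First I would slide $\psi^\alpha_{M,N}$ through each of the three factors of $\alpha(p_{M,N})$ on the right-hand side using naturality of $\psi^\alpha$ and monoidal coherence (the hexagon-type identities for tensor functors), turning $\alpha(f\otimes g)$ into $\alpha(f)\otimes \alpha(g)$ up to the appropriate $\psi^\alpha$-conjugates. Next I would rewrite the left-hand side by substituting the formula \eqref{eq:TwistRightAModuleByAut} for $r_{\alpha(M)}$ and $r_{\alpha(N)}$, and then use \eqref{eq:HalfBraidingForAlphaA} together with \eqref{eq:HalfBraidingCompatibility} to re-express the half-braiding $e_{A,\alpha(N)}$ as
$$
e_{A,\alpha(N)}=(\id_{\alpha(N)}\otimes \lambda^\alpha)\circ(\psi^\alpha_{N,A})^{-1}\circ\alpha(e_{A,N})\circ\psi^\alpha_{A,N}\circ((\lambda^\alpha)^{-1}\otimes\id_{\alpha(N)}).
$$
The factors of $\lambda^\alpha$ and $(\lambda^\alpha)^{-1}$ introduced this way will then cancel against those coming from $r_{\alpha(M)}$, $r_{\alpha(N)}$ on the outside of the idempotent.

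For the remaining factor $\id\otimes (s\circ i)\otimes \id$, I would invoke connectedness of $A$: by uniqueness of the splitting, $\alpha(A)$ has the unique splitting $s_{\alpha(A)}=(\psi^\alpha_{A,A})^{-1}\circ \alpha(s)$ with respect to its own multiplication $\alpha(m)\circ\psi^\alpha_{A,A}$, and one checks that $((\lambda^\alpha)^{-1}\otimes(\lambda^\alpha)^{-1})\circ s\circ \lambda^\alpha$ is also such a splitting; equating them yields $\alpha(s)=\psi^\alpha_{A,A}\circ((\lambda^\alpha)^{-1}\otimes(\lambda^\alpha)^{-1})\circ s\circ\lambda^\alpha$. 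Combined with $\alpha(i)=(\lambda^\alpha)^{-1}\circ i$ (from $\lambda^\alpha$ being unital), this provides exactly the translation needed so that the $\lambda^\alpha$'s inside this factor also cancel with those from the module-action factors. After these substitutions, both sides reduce to the same morphism.

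The expected obstacle is purely notational bookkeeping: tracking the numerous instances of $\psi^\alpha_{-,-}$ and ensuring each $\lambda^\alpha$ is cancelled by its inverse in the correct slot requires careful indexing, and one must invoke monoidal coherence repeatedly to reassociate before cancelling. The key conceptual points—half-braiding compatibility of $\lambda^\alpha$ making the $e_{A,\alpha(N)}$ rewrite possible, and connectedness of $A$ forcing uniqueness of $s$—are what make the two sides match rather than differ by a scalar or invertible central correction. Once the idempotent-compatibility equation is established, the final claim that $\psi^\alpha$ endows $\underline{F}_A^\cZ(\alpha,\psi^\alpha,\lambda^\alpha)$ with a tensor structure is immediate: $\psi^\alpha_{M,N}$ descends to an isomorphism $\alpha(M)\otimes_A\alpha(N)\to \alpha(M\otimes_A N)$ between the images of the idempotents, and the hexagon axioms for this descended tensorator are inherited from those of $\psi^\alpha$ in $\cC$.
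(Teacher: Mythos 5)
Your proposal is correct and follows essentially the same route as the paper's proof: expand both separability idempotents, use naturality of $\psi^\alpha$, push $\lambda^\alpha$ and $(\lambda^\alpha)^{-1}$ past the module actions and the half-braiding via \eqref{eq:TwistRightAModuleByAut}, \eqref{eq:HalfBraidingForAlphaA}, and \eqref{eq:HalfBraidingCompatibility}, and handle the $s\circ i$ factor by invoking uniqueness of the splitting $s$ from connectedness of $A$ (the paper records this as \eqref{eq:SplittingCompatibilityWithLambda}; your stronger identity $\alpha(s)=\psi^\alpha_{A,A}\circ((\lambda^\alpha)^{-1}\otimes(\lambda^\alpha)^{-1})\circ s\circ\lambda^\alpha$ is an equivalent consequence of the same uniqueness). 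The only quibble is that the coherence axiom for a tensor functor is a pentagon-type, not a hexagon-type, identity, but this does not affect the argument.
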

\begin{proof}
We write $\psi=\psi^\alpha$ and $\lambda=\lambda^{\alpha}$ to ease the notation.
We also write a $\psi$ with multiple subscripts to denote a composite of $\psi$'s, as the order does not matter via associativity of $\psi$.
Let $s: A\to A\otimes A$ be the splitting of $m: A\otimes A \to A$ which is unique as $A$ is connected.
By the uniqueness of $s$ and as $\lambda: \alpha(A) \to A$ is an algebra isomorphism, we see that
\begin{equation}
\label{eq:SplittingCompatibilityWithLambda}
(\lambda^{-1}\otimes \lambda^{-1}) \circ s \circ i
= 
\psi_{A,A}^{-1}\circ \alpha(s)\circ \lambda^{-1}\circ i
=
\psi_{A,A}^{-1}\circ\alpha(s)\circ \alpha(i).
\end{equation}
We calculate
\begin{align*}
\psi_{M,N} &\circ p_{\alpha(M), \alpha(N)} 
\\&=
\psi_{M,N}
\circ
(r_{\alpha(M)} \otimes r_{\alpha(N)})
\circ
(\id_{\alpha(M)\otimes A} \otimes e_{A,\alpha(N)})
\circ
(\id_{\alpha(M)} \otimes (s\circ i) \otimes \id_{\alpha(N)})
%\\&=
%\psi_{M,N}
%\circ
%((\alpha(r_M)\circ \psi^\alpha_{M,A}\circ \id_{\alpha(M)}\otimes \lambda^{-1})\otimes (\alpha(r_N)\circ \psi^\alpha_{N,A}\circ \id_{\alpha(N)}\otimes \lambda^{-1}))
%\\&\hspace{1cm}
%\circ
%(\id_{\alpha(M)\otimes A} \otimes e_{A,\alpha(N)})
%\circ
%(\id_{\alpha(M)} \otimes (s\circ i) \otimes \id_{\alpha(N)})
\\&=
\alpha(r_M\otimes r_N)
\circ
\psi_{M,A,N,A}
\\&\hspace{1cm}\circ
(\id_{\alpha(M)}\otimes \lambda^{-1}\otimes ((\id_{\alpha(N)}\otimes \lambda^{-1})\circ e_{A,\alpha(N)})\otimes \id_{A})
\circ
(\id_{\alpha(M)} \otimes (s\circ i) \otimes \id_{\alpha(N)})
\\&\underset{\eqref{eq:HalfBraidingCompatibility}}{=}
\alpha(r_M\otimes r_N)
\circ
\psi_{M,A,N,A}
\\&\hspace{1cm}\circ
(\id_{\alpha(M)}\otimes \lambda^{-1}\otimes (e_{\alpha(A),\alpha(N)}\circ (\lambda^{-1}\otimes \id_{\alpha(N)}))\otimes \id_{A})
\circ
(\id_{\alpha(M)} \otimes (s\circ i) \otimes \id_{\alpha(N)})
\\&\underset{\eqref{eq:HalfBraidingForAlphaA}}{=}
\alpha(r_M\otimes r_N)
\circ
\psi_{M,A,N,A}
\circ
(\id_{\alpha(M)}\otimes \id_A\otimes (\psi^{-1}_{N,A}\circ \alpha(e_{A,N})\circ \psi_{A,N}))
\\&\hspace{1cm}
\circ
(\id_{\alpha(M)} \otimes ((\lambda^{-1}\otimes \lambda^{-1})\circ s\circ i) \otimes \id_{\alpha(N)})
\\&\underset{\eqref{eq:SplittingCompatibilityWithLambda}}{=}
\alpha(r_M\otimes r_N)
\circ
\psi_{M,A,N,A}
\circ
(\id_{\alpha(M)}\otimes \id_A\otimes (\psi^{-1}_{N,A}\circ \alpha(e_{A,N})\circ \psi_{A,N}))
\\&\hspace{1cm}
\circ
(\id_{\alpha(M)}\otimes(\psi^{-1}_{A,A}\circ \alpha(s)\circ\alpha(i))\otimes \id_{\alpha(N)})
\\&=
\alpha(p_{M,N})\circ \psi_{M,N}.
\qedhere
\end{align*}
\end{proof}

We conclude that $\underline{F}_A^\cZ$ is a strict monoidal functor of categorical groups.

We now show that a central $G$-equivariant structure $\lambda$ on $A$ exactly correspond to liftings of the categorical action $\uG\to \uEqStab(A,e_A)$ to $\uEqTens(\cC|A,e_A)$.

\begin{equation}
\label{eq:LiftGAction}
\begin{tikzcd}%[column sep]
\uG
\ar[]{dr}{{(\underline{\rho},\mu)}}
\ar[dd,dashed, "\exists?"]
\\
&
\uEqStab(A,e_A)
\ar[hook]{r}{}
& 
\uEqStab(A)
\ar[hook]{r}{}
&
\uEqTens(\cC)
\\
\uEqTens(\cC|A,e_A)
\ar[hook]{r}{}
\ar[two heads]{ur}{\underline{F}^Z}
\ar[swap]{dr}{\underline{F}_A^Z}
&
\uEqTens(\cC|A)
\ar[swap, two heads]{ur}{\underline{F}}
\ar[dr, "\underline{F}_A"]
\\
&
\uEqTens(\cC_A)
\ar[swap, two heads]{r}{}
& 
\uEq(\cC_A)
\end{tikzcd}
\end{equation}

%\nn{fix next prop.
%The obstruction should be the exact sequence \eqref{eq:ShortExactSequence} where $\cC$ is replaced by $Z(\cC)$. 
%Check this!}

\begin{prop}
\label{prop:LiftingCategoricalGAction}
The data of a lifting of $(\underline{\rho},\mu)$ to $\uEqTens(\cC|A,e_A)$ is equivalent to a central $G$-equivariant algebra structure $\lambda$ on $(A,e)$.
\end{prop}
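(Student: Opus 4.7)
The plan is to observe that both a lift and a central $G$-equivariant algebra structure are defined by the same piece of data, subject to the same axiom, and to check that the higher coherences on the lift side impose no further constraints.

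First I would unpack what a lift provides. A lift is a monoidal functor $(\underline{\widetilde{\rho}},\widetilde{\mu}):\uG\to \uEqTens(\cC|A,e_A)$ with $\underline{F}^Z\circ(\underline{\widetilde{\rho}},\widetilde{\mu})=(\underline{\rho},\mu)$. Since $\underline{F}^Z$ forgets only the algebra isomorphism component (together with its centrality condition \eqref{eq:HalfBraidingCompatibility}), the 1-morphism assignment of a lift must take the form $g\mapsto (\rho(g),\psi^g,\lambda^g)$ for some family $\lambda=\{\lambda^g:g(A)\to A\}_{g\in G}$ of algebra isomorphisms in $\cZ(\cC)$, while the underlying monoidal natural isomorphism of $\widetilde{\mu}_{g,h}$ is forced to equal $\mu_{g,h}$. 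Thus the only genuinely free data in a lift is the family $\lambda$ of central algebra isomorphisms; this matches exactly the data of a central $G$-equivariant algebra structure on $(A,e)$ at the level of objects.

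Second, I would translate the remaining constraint -- that $\mu_{g,h}$ underlie a 2-morphism in $\uEqTens(\cC|A,e_A)$ -- into an axiom on $\lambda$. By the composition rule in Definition \ref{defn:ExamplesOfCategoricalGroups}, the composite 1-morphism $(\rho(g),\psi^g,\lambda^g)\circ(\rho(h),\psi^h,\lambda^h)$ has algebra isomorphism component $\lambda^g\circ g(\lambda^h):gh(A)\to A$, and the 2-morphism condition on $\widetilde{\mu}_{g,h}$ reads
$$\lambda^{gh}\circ \mu^A_{g,h}=\lambda^g\circ g(\lambda^h),$$
which is precisely the $G$-equivariance axiom \eqref{eq:EquivariantObject}. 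The strict unitality clause of Notation \ref{nota:CategoricalAction}, applied at $g=h=e$, pins down $\lambda^e=\id_A$ as in Remark \ref{rem:StrictlyUnitalEquivariantStructure}.

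Third, I would check that the monoidal coherences (pentagon, unit) for $\widetilde{\mu}$ add nothing. Since 2-morphisms in $\uEqTens(\cC|A,e_A)$ are 2-morphisms in $\uEqTens(\cC)$ satisfying an additional \emph{property} (rather than carrying extra data), the functor $\underline{F}^Z$ is faithful on 2-morphisms, so the pentagon and unit equations for $\widetilde{\mu}$ reduce to the corresponding equations for $\mu$, which already hold by hypothesis. This gives both directions: a lift produces a central $G$-equivariant algebra structure by reading off $\lambda$, and conversely a central $G$-equivariant structure $\lambda$ assembles into a lift by declaring $\widetilde{\rho}(g):=(\rho(g),\psi^g,\lambda^g)$ and $\widetilde{\mu}_{g,h}:=\mu_{g,h}$. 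The proposition is essentially a bookkeeping exercise, and the only step requiring care is confirming that the composition rule of Definition \ref{defn:ExamplesOfCategoricalGroups} really does send the algebra component to $\lambda^g\circ g(\lambda^h)$ and that this composite remains central -- both of which follow directly from centrality of each $\lambda^g$ together with the fact that tensor autoequivalences preserve morphisms of $\cZ(\cC)$. I anticipate no serious obstacle.
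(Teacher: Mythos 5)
Your proof is correct and matches the paper's approach exactly; the paper's own proof is a one-line assertion of precisely the unpacking you carry out, namely that a lift is the data of central algebra isomorphisms $\lambda^g:g(A)\to A$ satisfying the monoidality/equivariance axiom \eqref{eq:EquivariantObject}. The extra detail you supply — the forced equality $\widetilde{\mu}_{g,h}=\mu_{g,h}$, the translation of the 2-morphism condition into the cocycle identity $\lambda^{gh}\circ\mu^A_{g,h}=\lambda^g\circ g(\lambda^h)$, the faithfulness of $\underline{F}^Z$ on 2-morphisms making the pentagon/unit coherences automatic, and the closure of centrality under the composition rule — is all correct and is exactly what the paper leaves implicit.
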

\begin{proof}
A lifting of $(\underline{\rho},\mu)$ to a monoidal functor $(\underline{\widetilde{\rho}},\mu): \uG \to \uEqTens(\cC|A,e_A)$
is exactly the data of an algebra isomorphism $\lambda^g \in \cZ(\cC)(g(A) \to A)$ for all $g\in G$ which satisfies the monoidality axiom \eqref{eq:EquivariantObject}.
\end{proof}

\begin{rem}
\label{rem:BoringLift}
Given a categorical action $\uG\to \uEqStab(A)$ in the absence of a half braiding on $A$, the data of a lift $\uG \to \uEqTens(\cC|A)$ is equivalent to a $G$-equivariant algebra structure $\lambda$ on $A$.
Since we will not use this result, we leave it to the interested reader.
\end{rem}

We conclude that given a $G$-equivariant structure $\lambda$ on $A$, 
$(\underline{\rho},\mu):\underline{G}\rightarrow \uEqTens(\mathcal{C})$
induces a canonical categorical action
$(\underline{\rho}_A,\mu):\underline{G}\rightarrow \uEq(\mathcal{C}_A)$ given by postcomposing the lift of $(\underline{\rho}, \mu)$ from Remark \ref{rem:BoringLift} with $\underline{F}_A$.
Moreover, in the presence of a half-braiding $e_A$ on $A$ such that $(A,e_A)$ is commutative, given a central $G$-equivariant structure $\lambda$ on $A$, Proposition \ref{prop:LiftingCategoricalGAction} gives us a canonical categorical action $(\underline{\rho}_A^\cZ, \mu): \uG \to \uEqTens(\cC_A)$.

%%%%%%%%%%%%%%%%%%%%%%%%%%%%%%%%%%%%%%%%%%%%%%%%%%%%%%%%%%%%%%%
\subsection{The induced action when \texorpdfstring{$\cC$}{C} is non-degenerately braided}
\label{sec:InducedActionForEtaleAlgebra}

Suppose now that $\cC$ is a non-degenerately braided fusion category and $(A,m,i)\in \cC$ is a connected \'{e}tale algebra.
%Note that $Z(\cC)\cong \cC \boxtimes \cC^{\text{rev}}$ where `rev' denotes the reversed braiding by \nn{}.
In this setting, we define
\begin{itemize}
\item 
$\uEqBrStab(A)\subset \uEqBr(\cC)$ is the full categorical subgroup whose 1-morphisms are braided autoequivalences $(\alpha, \psi^\alpha) \in \uEqBr(\cC)$ such that there exists an algebra isomorphism $\alpha(A) \cong A$, and whose 2-morphisms are monoidal natural isomorphisms.
\item
$\uEqBr(\cC|A)$ is the categorical group whose 1-morphisms are tuples $(\alpha,\psi^\alpha, \lambda^\alpha)$ such that $(\alpha, \psi^\alpha)\in\uEqBr(\cC)$ and $\lambda^\alpha: \alpha(A) \to A$ is an algebra isomorphism, and whose 2-morphisms $\eta: (\alpha,\psi^\alpha, \lambda^\alpha)\Rightarrow (\gamma,\psi^\gamma, \lambda^\gamma)$ are monoidal natural isomorphisms $\eta: (\alpha, \psi^\alpha) \Rightarrow (\gamma, \psi^\gamma)$ such that $\lambda^\gamma\circ \eta_A = \lambda^\alpha$.
\end{itemize}

\begin{rem}
\label{rem:Pi2OfEqBr(C)}
Recall from \cite[{Prop.~7.3.ii}]{MR2677836} that we have a group isomorphism $\sigma:\Inv(\cC)\to \pi_2(\uEqBr(\cC))$ denoted $a\mapsto \sigma^a$
which is determined by its value on a simple object $c\in \cC$ by the morphism in $\End_\cC(c)$ such that $\beta_{c,a}\circ \beta_{a,c} = \id_a \otimes\sigma^a_c$.
The isomorphism $\sigma^a$ is called the \emph{loop/ring operator} because when $\cC$ has a pivotal structure, we may represent $\sigma^a_c$ for arbitrary $c\in \cC$ graphically by \begin{equation*}
%\label{eq:LoopOperator}
\sigma^a_c=
\begin{tikzpicture}[baseline=-.1cm]
	\draw (0,-.1) -- (0,.4);
	\draw (115:.2cm) arc (115:425:.2cm);
	\draw (0,-.3) -- (0,-.5);
	\node at (0,.6) {\scriptsize{$c$}};
	\node at (-.35,0) {\scriptsize{$a$}};
\end{tikzpicture}\,.
\end{equation*}
Notice that since $\uEqBrStab(A)\subset \uEqBr(\cC)$ is a full categorical subgroup, $\pi_2(\uEqBrStab(A)) = \pi_2(\uEqBr(\cC))\cong \Inv(\cC)$. % as in Remark \ref{rem:Pi2OfEqBr(C)}. 

Now given a fixed $c\in \cC$, we have an exact sequence
\begin{equation}
\label{eq:RingOperatorSeqeunce}
\begin{tikzcd}%[column sep]
1
\ar[]{r}{}
&
c^{\prime}\cap \operatorname{Inv}(\cC)
\ar[hook]{r}
&
\operatorname{Inv}(\cC)
\ar[]{r}{\sigma_c}
&
\Aut_\cC(c)
\end{tikzcd}
\end{equation}
where $\sigma_c(a):= \sigma^a_c$, and $c'\cap \Inv(\cC)$ is the subgroup of $a\in \Inv(\cC)$ which are \emph{transparent} to $c$, i.e., $\beta_{c,a}\circ \beta_{a,c} = \id_{a\otimes c}$.
\end{rem}

\begin{lem}
The group isomorphism $\sigma:\Inv(\cC) \to \pi_2(\uEqBr(\cC))$ from Remark \ref{rem:Pi2OfEqBr(C)} yields a group isomorphism $\pi_2(\uEqBr(\cC|A)) \cong A'\cap \Inv(\cC)$.
\end{lem}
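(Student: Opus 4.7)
The plan is to unwind the definitions and then apply the exact sequence \eqref{eq:RingOperatorSeqeunce} specialized to $c = A$.

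First, I would identify the identity 1-morphism of $\uEqBr(\cC|A)$. Using the composition formula in Definition \ref{defn:ExamplesOfCategoricalGroups}, one sees that the identity must be $(\id_\cC, \id, \id_A)$. Hence a 2-automorphism of this 1-morphism is, by definition, a monoidal natural isomorphism $\eta \colon \id_\cC \Rightarrow \id_\cC$ (automatically compatible with the braiding, so lying in $\pi_2(\uEqBr(\cC))$) subject to the extra compatibility with the algebra-isomorphism data, which reads $\id_A \circ \eta_A = \id_A$, i.e.\ $\eta_A = \id_A$.

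Next, I would transport this condition along the group isomorphism $\sigma \colon \Inv(\cC) \xrightarrow{\sim} \pi_2(\uEqBr(\cC))$ from Remark \ref{rem:Pi2OfEqBr(C)}. Every $\eta \in \pi_2(\uEqBr(\cC))$ is $\sigma^a$ for a unique $a \in \Inv(\cC)$, and the defining relation $\beta_{c,a} \circ \beta_{a,c} = \id_a \otimes \sigma^a_c$ specialized to $c = A$ shows that the constraint $\eta_A = \sigma^a_A = \id_A$ is equivalent to $\beta_{A,a} \circ \beta_{a,A} = \id_{a \otimes A}$, i.e.\ $a$ is transparent to $A$. This is exactly the condition $a \in A' \cap \Inv(\cC)$, matching the kernel of $\sigma_A$ in the exact sequence \eqref{eq:RingOperatorSeqeunce}.

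Finally, since the restriction of a group isomorphism to a subgroup defined by a condition preserved by the group law is again a group isomorphism onto its image, the correspondence $a \mapsto \sigma^a$ yields the desired group isomorphism $A' \cap \Inv(\cC) \cong \pi_2(\uEqBr(\cC|A))$. The only nontrivial step is the bookkeeping in the first paragraph, confirming that the naturality and monoidality of $\eta$ together with compatibility with the algebra isomorphism collapses to the single pointwise condition $\eta_A = \id_A$; no additional constraints are imposed because the forgetful functor $\underline{F}\colon \uEqBr(\cC|A) \to \uEqBr(\cC)$ is faithful on 2-morphisms. This is the main (minor) obstacle, and it follows directly from Definition \ref{defn:ExamplesOfCategoricalGroups}.
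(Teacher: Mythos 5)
Your proof is correct and follows essentially the same approach as the paper: identify a $2$-automorphism $\eta$ of the identity $1$-morphism $(\id_\cC,\id_A)$ in $\uEqBr(\cC|A)$ as an element $\sigma^a \in \pi_2(\uEqBr(\cC))$, and then observe that the compatibility condition $\id_A \circ \eta_A = \id_A$ is exactly $\sigma^a_A = \id_A$, i.e.\ $a$ transparent to $A$. The paper's proof is terser, but the extra bookkeeping you add (explicitly naming the identity $1$-morphism and noting faithfulness of the forgetful functor on $2$-morphisms) is all consistent with Definition \ref{defn:ExamplesOfCategoricalGroups}.
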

\begin{proof}
Suppose $\eta$ is an automorphism of the identity $(\id_\cC,\id_A)\in \uEqBr(\cC|A)$.
Then by Remark \ref{rem:Pi2OfEqBr(C)}, $\eta \in \pi_2(\uEqBr(\cC))$, so $\eta= \sigma^a$ for some $a\in \Inv(\cC)$.
%which acts as in \eqref{eq:LoopOperator} above. 
Now the condition that $\id_A \circ \eta_A = \id_A$ implies that $\sigma^a_A = \id_A$, and thus $a$ must be transparent to $A$.
\end{proof}

The following proposition is analogous to Proposition \ref{prop:DecategorifiedExactSequenceNonBraided}.

\begin{prop}
Let
$$
\begin{tikzcd}%[column sep]
%\underline{*}
%\ar[]{r}{}
%&
\uEqBr(\cC|A)
\ar[]{r}{\underline{F}}
&
\uEqBrStab(A)
%\ar[]{r}{}
%&
%\underline{*}
\end{tikzcd}
$$
be the forgetful functor. 
Then \eqref{eq:RingOperatorSeqeunce} extends to an exact sequence of groups
$$
\begin{tikzcd}[column sep=small]
1
\ar[]{r}{}
&
A^{\prime}\cap \operatorname{Inv}(\cC)
\ar[hook]{r}
&
\operatorname{Inv}(\cC)
\ar[]{r}{\sigma_A}
&
\Aut_\cC(A)
\ar[]{rr}{[\id_\cC, \,\cdot\,]}
&&
\EqBr(\cC|A)
\ar[]{r}{F}
&
\EqBrStab_\cC(A)
\ar[]{r}{}
&
1.
\end{tikzcd}
$$
\end{prop}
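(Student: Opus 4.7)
The plan is to mirror the proof of Proposition \ref{prop:DecategorifiedExactSequenceNonBraided}, replacing the role of $\Hom(\cG \to \bbC^\times) \cong \pi_2(\uEqTens(\cC))$ with $\Inv(\cC) \cong \pi_2(\uEqBr(\cC))$ via the ring-operator isomorphism $\sigma$ of Remark \ref{rem:Pi2OfEqBr(C)}. The new left-hand segment
$$
1 \to A' \cap \Inv(\cC) \hookrightarrow \Inv(\cC) \xrightarrow{\sigma_A} \Aut_\cC(A)
$$
is already exact by \eqref{eq:RingOperatorSeqeunce} and the definition of $A' \cap \Inv(\cC)$ as the kernel of $\sigma_A$, so the work is concentrated at the two middle terms and at the end.

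First I would verify exactness at $\Aut_\cC(A)$. An element $\psi \in \Aut_\cC(A)$ satisfies $[\id_\cC, \psi] = [\id_\cC, \id_A]$ in $\EqBr(\cC|A)$ if and only if there exists a 2-morphism $\eta:(\id_\cC, \id_A) \Rightarrow (\id_\cC, \psi)$ in $\uEqBr(\cC|A)$. Unpacking Definition \ref{defn:ExamplesOfCategoricalGroups}, such $\eta$ is precisely a monoidal (hence automatically braided, since it is an automorphism of a braided monoidal functor) natural automorphism of $\id_\cC$ with $\psi \circ \eta_A = \id_A$, i.e.\ $\eta_A = \psi^{-1}$. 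By Remark \ref{rem:Pi2OfEqBr(C)}, any such $\eta$ is of the form $\sigma^a$ for a unique $a \in \Inv(\cC)$, and the constraint reads $\sigma^a_A = \psi^{-1}$. Since $\Inv(\cC)$ is a group under the symmetry $a\mapsto a^{-1}$, this is equivalent to $\psi \in \sigma_A(\Inv(\cC))$, giving exactness.

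Next I would show exactness at $\EqBr(\cC|A)$. Surjectivity of $F$ is immediate: for $[\alpha] \in \EqBrStab(A)$, by definition of $\uEqBrStab(A)$ there exists some algebra isomorphism $\lambda : \alpha(A) \to A$, so $(\alpha, \lambda)$ lifts $\alpha$. For the kernel, suppose $F[\alpha, \lambda]$ is trivial in $\EqBrStab(A)$; then there is a monoidal natural isomorphism $\eta:\alpha \Rightarrow \id_\cC$ in $\uEqBr(\cC)$. Transporting $\lambda$ along $\eta$, the pair $(\alpha, \lambda)$ is equivalent in $\uEqBr(\cC|A)$ to $(\id_\cC, \lambda \circ \eta_A^{-1})$, where $\lambda \circ \eta_A^{-1} \in \Aut_\cC(A)$. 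Hence $[\alpha, \lambda]$ lies in the image of $[\id_\cC, \,\cdot\,]$. Conversely, any $[\id_\cC, \psi]$ manifestly maps to $[\id_\cC]$ under $F$.

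The only step that requires real attention is the exactness at $\Aut_\cC(A)$, because it hinges on correctly identifying the 2-morphisms in $\uEqBr(\cC|A)$ with the $\pi_2$ of $\uEqBr(\cC)$ subject to the compatibility with $\lambda$. All other steps are essentially formal and proceed exactly as in the proof of Proposition \ref{prop:DecategorifiedExactSequenceNonBraided}; I do not anticipate any subtlety from the braiding beyond the observation that every monoidal natural automorphism of $\id_\cC$ is automatically compatible with the braiding.
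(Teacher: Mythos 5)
Your proposal is correct and follows essentially the same route as the paper: both reduce exactness at $\Aut_\cC(A)$ to identifying $\pi_2(\uEqBr(\cC))$ with $\Inv(\cC)$ via the ring operators of Remark~\ref{rem:Pi2OfEqBr(C)}, and both treat exactness at $\EqBr(\cC|A)$ and surjectivity of $F$ as formal consequences of the definitions. The only difference is cosmetic: you spell out why every class in $\ker F$ has a representative of the form $(\id_\cC,\psi)$, a step the paper states without elaboration.
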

\begin{proof}
%$\iota^{1}$ is simply the inclusion map. 
%The map $\sigma_A$ is given by mapping $a\in A'\cap\Inv(\cC)$ to %$\sigma^a_A$ as in Remark \ref{rem:Pi2OfEqBr(C)}. 
%As discussed for \eqref{eq:RingOperatorSeqeunce}, the kernel of %$R$ is precisely $A^{\prime}\cap \operatorname{Inv}(\cC)$. 
Observe $\lambda$ is in the kernel of the map $\lambda \mapsto [\id_{\cC}, \lambda]\in \EqBr(\cC|A)$ if and only if 
there is a monoidal natural automorphism $\eta:\id_\cC\Rightarrow \id_\cC$ such that $\lambda\circ \eta_{A}=\id_{A}$. 
But by Remark \ref{rem:Pi2OfEqBr(C)}, 
when $\cC$ is non-degenerately braided, 
the map $\Inv(\cC)\ni a \mapsto \sigma^a \in \Aut_\otimes^{\rm br}(\id_\cC)$ is an isomorphism.
%is canonically isomorphic to $\operatorname{Inv}(\cC)$, where the natural isomorphism $\sigma^{g}_{c}:c\rightarrow c$ is given by the $g$-ring morphism \eqref{eq:LoopOperator}. 
Thus the kernel of $[\id_\cC,\,\cdot\,]$ is precisely the image of $\sigma_A$. 
Finally, notice that kernel of $F$ is the set of $[\id_{\cC}, \lambda]$ such that $\lambda:A\rightarrow A$ is an algebra isomorphism, which is precisely the image of $[\id_\cC, \,\cdot\,]: \Aut_\cC(A) \to \EqTens(\cC|A)$.
\end{proof}

Analogous to Definition \ref{defn:FAInPresenceOfHalfBraiding} and Lemma \ref{lem:TensoratorCompatibilityWithSeparabilityIdempotent}, there is a strict monoidal functor
$$
\underline{F}_A: \uEqBr(\cC|A) \to \uEqTens(\cC_A).
$$
Since every algebra isomorphism $\lambda^\alpha\in \cC(\alpha(A) \to A)$ is compatible with the braiding, we no longer need to pass to $Z(\cC)$.

\begin{lem}
Suppose $(\alpha, \psi^\alpha, \lambda^\alpha)\in \uEqBr(\cC|A)$.
The functor $\underline{F}_A(\alpha, \psi^\alpha, \lambda^\alpha)\in \uEqTens(\cC_A)$ preserves local modules in $\cC_A^\loc$ and is compatible with the braiding on $\cC_A^\loc$.
Thus $\underline{F}_A(\alpha, \psi^\alpha, \lambda^\alpha)\in\uEqBr(\cC_A^\loc)$.
\end{lem}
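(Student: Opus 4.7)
The plan is to verify two things about $\underline{F}_A(\alpha, \psi^\alpha, \lambda^\alpha)$: first, that it sends local modules to local modules, i.e., that $(\alpha(M), \tilde r) \in \cC_A^\loc$ whenever $(M, r) \in \cC_A^\loc$, where $\tilde r := \alpha(r) \circ \psi^\alpha_{M,A} \circ (\id_{\alpha(M)} \otimes (\lambda^\alpha)^{-1})$ is the twisted action from \eqref{eq:TwistRightAModuleByAut}; and second, that the resulting tensor autoequivalence of $\cC_A^\loc$ intertwines the braidings on source and target. Both assertions reduce, in the end, to tracking how $\psi^\alpha$ and $\lambda^\alpha$ interact with the braiding $\beta$.

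For the first claim, I would start from $r \circ \beta_{A,M} \circ \beta_{M,A} = r$ and apply $\alpha$ to both sides. Using the braided functor identity $\alpha(\beta_{X,Y}) \circ \psi^\alpha_{X,Y} = \psi^\alpha_{Y,X} \circ \beta_{\alpha(X), \alpha(Y)}$ on each of $\alpha(\beta_{A,M})$ and $\alpha(\beta_{M,A})$ and cancelling the interior pair $(\psi^\alpha_{A,M})^{-1} \circ \psi^\alpha_{A,M}$, the equation rearranges into one involving $\alpha(r) \circ \psi^\alpha_{M,A} \circ \beta_{\alpha(A), \alpha(M)} \circ \beta_{\alpha(M), \alpha(A)}$. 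Then I would invoke naturality of $\beta$ with respect to the algebra isomorphism $\lambda^\alpha: \alpha(A) \to A$:
\[
\beta_{A,\alpha(M)} \circ (\lambda^\alpha \otimes \id_{\alpha(M)}) = (\id_{\alpha(M)} \otimes \lambda^\alpha) \circ \beta_{\alpha(A), \alpha(M)},
\]
together with its analog for $\beta_{\alpha(M), \alpha(A)}$, in order to replace $\alpha(A)$ by $A$ on both sides. The inverse $(\lambda^\alpha)^{-1}$ already built into $\tilde r$ is precisely what absorbs the $\lambda^\alpha$'s introduced by naturality, so the equation collapses to $\tilde r \circ \beta_{A, \alpha(M)} \circ \beta_{\alpha(M), A} = \tilde r$, the locality of $(\alpha(M), \tilde r)$.

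For the second claim, I would observe that for local modules $(M,r),(N,s) \in \cC_A^\loc$, the braiding on $\cC_A^\loc$ is nothing more than the descent of $\beta_{M,N} : M\otimes N \to N\otimes M$ through the separability idempotent \eqref{eq:SeparabilityIdempotent}, where locality is exactly what makes this descent well-defined. By Lemma \ref{lem:TensoratorCompatibilityWithSeparabilityIdempotent}, the tensorator on $\underline{F}_A(\alpha,\psi^\alpha,\lambda^\alpha)$ is itself the descent of $\psi^\alpha$ through the same idempotent. Consequently, the braided functor axiom on $\cC$, which $\psi^\alpha$ satisfies by assumption, descends verbatim to yield the braided functor axiom for $\underline{F}_A(\alpha,\psi^\alpha,\lambda^\alpha)$ on $\cC_A^\loc$.

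The main obstacle is the bookkeeping in the first claim, where the types of $\psi^\alpha$ and $\lambda^\alpha$ must be tracked carefully through the four-term composite $\alpha(r)\circ\alpha(\beta_{A,M})\circ\alpha(\beta_{M,A})$. The conceptual point, however, is clean: $(\lambda^\alpha)^{-1}$ is the bridge between $\alpha(A)$-module data and $A$-module data, and naturality of $\beta$ with respect to $\lambda^\alpha$ is exactly what makes that bridge compatible with the locality condition.
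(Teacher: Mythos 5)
Your proof is correct and follows essentially the same route as the paper. For the first claim, the paper starts from $\tilde r \circ \beta_{A,\alpha(M)} \circ \beta_{\alpha(M),A}$ and simplifies it to $\tilde r$, whereas you apply $\alpha$ to the locality equation for $(M,r)$ and build up to the same identity; the two directions use the identical ingredients (naturality of $\beta$ with respect to $\lambda^\alpha$ plus the braided-functor identity for $(\alpha,\psi^\alpha)$) and differ only in which end of the chain of equalities you begin from. For the second claim, the paper tersely observes that the tensorator and the braiding on $\cC_A^\loc$ are ``exactly'' those of $\cC$; your version makes the descent through the separability idempotent explicit and invokes Lemma \ref{lem:TensoratorCompatibilityWithSeparabilityIdempotent} to justify it, which is the same argument spelled out a bit more carefully.
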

\begin{proof}
We write $\psi = \psi^\alpha$ and $\lambda = \lambda^{\alpha}$ to ease the notation.
Suppose $(M,r)\in \cC_A^\loc$.
Then since $(\alpha,\psi)\in \uEqBr(\cC)$, we have
\begin{align*}
\alpha(r)
\circ 
\psi_{M,A}
&\circ 
(\id_{\alpha(M)}\otimes \lambda^{-1})
\circ
\beta_{A,\alpha(M)}
\circ
\beta_{\alpha(M), A}
\\&=
\alpha(r)
\circ 
\psi_{M,A}
\circ
\beta_{\alpha(A),\alpha(M)}
\circ
\beta_{\alpha(M), \alpha(A)}
\circ 
(\id_{\alpha(M)}\otimes \lambda^{-1})
\\&=
\alpha(
r
\circ 
\beta_{A,M}\circ \beta_{M,A}
)
\circ
\psi_{M,A}
\circ 
(\id_{\alpha(M)}\otimes \lambda^{-1})
\\&=
\alpha(r)
\circ
\psi_{M,A}
\circ 
(\id_{\alpha(M)}\otimes \lambda^{-1}),
\end{align*}
and thus $\alpha(M)\in \cC_A^\loc$.
As the tensorator of $\underline{F}_A(\alpha, \psi, \lambda)$ is exactly $\psi$, $(\alpha,\psi)\in \uEqBr(\cC)$, and the braiding of $\cC_A^\loc$ is exactly the braiding on $\cC$, we have that $\underline{F}_A(\alpha, \psi, \lambda)$ is braided.
\end{proof}

We now see that analogous to Proposition \ref{prop:LiftingCategoricalGAction}, $G$-equivariant structures $\lambda$ on $A$ correspond to lifts of $(\underline{\rho},\mu): \uG \to \uEqBrStab(A)$
to 
$(\widetilde{\underline{\rho}}, \mu):\uG \to \uEqBr(\cC|A)$.

\begin{equation}
\label{eq:LiftGActionBraided}
\begin{tikzcd}%[column sep]
\uG
\ar[]{dr}{{(\underline{\rho},\mu)}}
\ar[dd,dashed, "\exists?"]
\\
& 
\uEqBrStab(A)
\ar[hook]{r}{}
&
\uEqBr(\cC)
\\
\uEqBr(\cC|A)
\ar[swap, two heads]{ur}{\underline{F}}
\ar[dr, "\underline{F}_A"]
\\
&
\uEqBr(\cC_A^\loc)
\end{tikzcd}
\end{equation}

%%%%%%%%%%%%%%%%%%%%%%%%%%%%%%%%%%%%%%%%%%%%%%%%%%%%%%%%%%%%%%%
\subsection{Induced actions on \texorpdfstring{$G$}{G}-crossed braided extensions}
\label{sec:InducedActionForGCrossed}

For this section, suppose $\cC$ is a non-degenerately braided fusion category, 
$A\in \cC$ is a connected \'{e}tale algebra, and
$\cF = \bigoplus_{g\in G} \cF_g$
is a $G$-crossed braided extension of $\cC=\cF_e$.
We also assume we have a fixed $G$-equivariant algebra structure $\lambda$ on $A$.
By \cite{MR3671186} (see Notation \ref{nota:CategoricalAction}) and Remark \ref{rem:StrictlyUnitalEquivariantStructure}, $\lambda^e = \id_A$.
We claim we can construct a canonical $G$-crossed braided extension $\cE$ of $\cC_A^\loc$, which satisfies that the equivariantization $\cE^G$ can be canonically identified as a braided tensor category with $(\cF^G)_{(A,\lambda)}^{\loc}$.
That is, the following diagram commutes:
\begin{equation}
\label{eq:GaugingCommutes}
\begin{tikzcd}
  & \cC_A^{\mathrm{loc}} \arrow[rd, "\text{gauge lifted }G\text{ action}"] &  
  \\
  \cC \arrow[rd, "\text{gauge }G\text{ action}"'] \arrow[ru,squiggly, "\text{condense }A"] &  & \cE^G 
  \\
  & \cF^G \arrow[ru,squiggly,"\text{condense }{(A,\lambda)}"']
  &
\end{tikzcd}
\end{equation}

First, notice that since $A\in \cC= \cF_e$, the $G$-crossed braiding $\beta$ of $\cF$ provides $A$ with a lift to $Z(\cF)$, i.e., $(A, \beta_{A,-}) \in Z(\cF)$.
We immediately see that $\cF_A$ is a tensor category.

\begin{rem}
Observe that since $A\in \cC = \cF_e$, any right $A$-module decomposes as a canonical direct sum of $g$-graded right $A$-modules, and that $(\cF_g)_A$ is a semisimple category for all $g\in G$.
Hence $\cF_A = \bigoplus_{g\in G} (\cF_g)_A$ is manifestly a $G$-extension of the tensor category $\cC_A = (\cF_e)_A$.
\end{rem}

As $\cF$ is a $G$-crossed braided extension of $\cC$, we have a categorical action $\uG \to \uEqTens(\cF)$.
Notice that our $G$-equivariant algbra structure $\lambda$ on $A\in \cC$ is \emph{central} in $\cF$, as $\cC=\cF_e$. 
By Proposition \ref{prop:LiftingCategoricalGAction}, $\lambda$ gives us a lift of our categorical action to $\uG \to \uEqTens(\cF|A)$.
Postcomposing with the strict monoidal functor $\underline{F}_A: \uEqTens(\cF|A) \to \uEqTens(\cF_A)$, we get a categorical action $(\underline{\rho}_A, \mu):\uG \to \uEqTens(\cF_A)$.
We now find a subcategory of $\cF_A$ which is a $G$-crossed braided extension of $\cC_A^{\loc}$.

\begin{defn}
\label{defn:gLocalModules}
For $g\in G$, we define the category of $g$-\emph{local modules} $\cF_A^{g-\loc}\subset (\cF_g)_A$ to be the right $A$-modules $(M,r)\in (\cF_g)_A$ such that
\begin{equation}
\label{eq:gLocalModule}
r=
r\circ \beta_{A,M} \circ (\lambda^g \otimes \id_M)\circ \beta_{M,A}.
\end{equation}
Notice $\cF_A^{g-\loc} \subset (\cF_g)_A$ is a semisimple subcategory. 

We define $\cE := \bigoplus_{g\in G} \cF_A^{g-\loc}$.
It is straightforward to verify the the full subcategory $\cE\subseteq \cF_{A}$ is closed under $\otimes_{A}$. 
Hence $\cE$ is manifestly a $G$-extension of $\cE_e = \cF_A^{e-\loc} = \cC^{\loc}_{A}$.
One now verifies that the $g$-action on $\cF_A$ maps $h$-local modules to $ghg^{-1}$-local modules.
Hence $\cE$ carries a categorical $G$-action.
\end{defn}

\begin{lem}
For all $(M,r_M)\in \cF_A^{g-\loc}$ and $(N,r_N)\in \cF_A^{h-\loc}$, $\beta_{M,N}\circ p_{M,N}=p_{g(N),M} \circ \beta_{M,N}$ where $p$ denotes the corresponding separability idempotents from \eqref{eq:SeparabilityIdempotent}.
Hence the $G$-crossed braiding $\beta$ of $\cF$ descends to a $G$-crossed braiding of $\cE$.
\end{lem}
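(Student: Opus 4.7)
My plan is a direct diagram chase: unfold both separability idempotents according to \eqref{eq:SeparabilityIdempotent} and transport $\beta_{M,N}$ past every factor using (i) naturality of $\beta_{M,-}$, (ii) the two hexagon coherence axioms for the $G$-crossed braiding, and (iii) the $g$-locality condition \eqref{eq:gLocalModule} on $M$. First I will note two preliminary facts. Since $A\in \cC=\cF_e$, the half-braiding on $A$ used in \eqref{eq:SeparabilityIdempotent} coincides with the restriction of the $G$-crossed braiding $\beta_{A,-}$. Moreover, by \eqref{eq:TwistRightAModuleByAut} applied in $\cF_A$, the right $A$-action on $g(N)$ appearing in $p_{g(N),M}$ is
\begin{equation*}
r_{g(N)} = g(r_N)\circ \psi^g_{N,A}\circ (\id_{g(N)}\otimes (\lambda^g)^{-1}).
\end{equation*}

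Next I will push $\beta_{M,N}$ through the three-term composite defining $p_{M,N}$. Naturality of $\beta_{M,-}$ against the morphisms $r_N:N\otimes A\to N$, $s\circ i:\mathbf 1\to A\otimes A$, and $\beta_{A,N}:A\otimes N\to N\otimes A$ lets me commute $\beta_{M,N}$ past these pieces at the cost of applying $g$ to them and introducing appropriate $\psi^g_{-,-}$. The two hexagon axioms listed in the definition of a $G$-crossed braided extension then let me rewrite $\beta_{M,N\otimes A}$ and $\beta_{M,A\otimes N}$ as composites of $\beta_{M,N}$ and $\beta_{M,A}$ interleaved with $\psi^g$. After these steps, the image of $p_{M,N}$ under conjugation by $\beta_{M,N}$ will look like the expression for $p_{g(N),M}$ except for a left-over $\beta_{M,A}$ sandwich on the $M$-side.

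That left-over is exactly what the $g$-locality of $M$ was designed to cancel: the identity \eqref{eq:gLocalModule},
\begin{equation*}
r_M = r_M\circ \beta_{A,M}\circ (\lambda^g\otimes \id_M)\circ \beta_{M,A},
\end{equation*}
allows me to replace $r_M$ by a composite that absorbs the stray $\beta_{M,A}$, producing in its place a factor of $(\lambda^g)^{-1}$ acting on the $g(A)$-leg. This $(\lambda^g)^{-1}$ then slots neatly into the formula above for $r_{g(N)}$, completing the identification with $p_{g(N),M}\circ \beta_{M,N}$.

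The main obstacle will be purely clerical bookkeeping: the $G$-crossed braiding, the tensorator $\psi^g$, and the equivariant isomorphism $\lambda^g$ all appear in the same calculation, and one must be careful to invoke the hexagon axioms in the right order so that the $\psi^g$ that arises from the hexagon on the $N$-leg matches the $\psi^g_{N,A}$ appearing in $r_{g(N)}$. The concluding claim of the lemma is then immediate: $\beta_{M,N}$ carries the image of $p_{M,N}$, which is $M\otimes_A N$, isomorphically onto the image of $p_{g(N),M}$, which is $g(N)\otimes_A M$, and these descended isomorphisms assemble into a $G$-crossed braiding on $\cE$ whose hexagon and naturality axioms are inherited from those of $\cF$.
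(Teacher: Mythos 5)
Your overall strategy — unfold the separability idempotents, push $\beta_{M,N}$ through using naturality and the $G$-crossed hexagons, and then invoke $g$-locality to absorb the leftover $\beta_{M,A}$ — is the same as the paper's, and the structural observations you make (the half-braiding on $A$ is $\beta_{A,-}$, and $r_{g(N)}$ carries a built-in $(\lambda^g)^{-1}$) are correct. But your accounting of $\lambda$'s does not close, because the proposal is missing two nontrivial inputs that the paper's computation relies on, both consequences of the \emph{uniqueness} of the splitting $s\in\Hom_{A\text{-}A}(A\to A\otimes A)$ for connected $A$.

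First, after you braid both legs of $s\circ i$ past $N$, you pick up a factor of $\beta_{A,A}$ applied to $s$; the paper kills this with the identity $\beta_{A,A}\circ s = s$, which follows from uniqueness of $s$ together with the commutativity $m\circ\beta_{A,A}=m$ (this is \eqref{eq:SplittingCommutative} in the paper). You never invoke commutativity of $A$, so this step is unaccounted for. Second, after you push $\beta_{M,N}$ past $s\circ i$ by naturality, you are left with $g(s\circ i)$ (and an accompanying $\psi^g_{A,A}$), whereas the target $p_{g(N),M}$ is built from the \emph{untwisted} $s\circ i$. Converting one to the other requires $(\lambda^{-1}\otimes\lambda^{-1})\circ s\circ i = (\psi^g_{A,A})^{-1}\circ g(s\circ i)$, which is \eqref{eq:SplittingCompatibilityWithLambda} and again rests on uniqueness of $s$ plus the fact that $\lambda^g$ is an algebra isomorphism. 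This produces \emph{two} copies of $(\lambda^g)^{-1}$, one of which is consumed by the $r_{g(N)}$ structure and the other of which cancels against the $\lambda^g$ produced by the $g$-locality rewriting of $r_M$. Your plan tracks only the single $(\lambda^g)^{-1}$ coming from $g$-locality and claims it "slots neatly" into $r_{g(N)}$ — but that leaves $g(s\circ i)$ dangling. Without the two uniqueness-of-$s$ identities the calculation does not terminate in $p_{g(N),M}\circ\beta_{M,N}$.

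Your concluding sentence about the descent to a $G$-crossed braiding on $\cE$ is correct and matches the paper.
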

\begin{proof}
Below, we write $\psi = \psi^{g}$ and $\lambda = \lambda^g$ to ease the notation.
Let $s\in \Hom_{A-A}(A\to A\otimes A)$ be the splitting of $m\in\Hom_{A-A}(A\otimes A \to A)$.
Since $s$ is unique as $A$ is connected, \eqref{eq:SplittingCompatibilityWithLambda} holds.
Moreover, by uniqueness of $s$ and commutativity of $A$ ($m\circ \beta_{A,A}=m$), we have
\begin{equation}
\label{eq:SplittingCommutative}
\beta_{A,A}\circ s = s.
\end{equation}
We now calculate that
\begin{align*}
&\beta_{M,N}\circ p_{M,N}
\\&=
\beta_{M,N}
\circ
(r_M \otimes r_N)
\circ
(\id_{M\otimes A} \otimes \beta_{A,N})
\circ
(\id_M \otimes (s\circ i) \otimes \id_N)
\\&=
(\id_{g(N)}\otimes r_M)
\circ
(\beta_{M,N}\otimes \id_{A})
\circ
(\id_M\otimes \beta_{A,N})
\circ
(\id_{M\otimes A}\otimes (r_N\circ \beta_{A,N}))
\circ
(\id_M \otimes (s\circ i) \otimes \id_N)
\\&=
(\id_{g(N)}\otimes r_M)
\circ
(\beta_{M,N}\otimes \id_{A})
\circ
(\id_M\otimes r_N \otimes \id_A)
\circ
(\id_{M\otimes N}\otimes (\beta_{A,A}\circ s\circ i))
\\&
\underset{\eqref{eq:SplittingCommutative}}{=}
(\id_{g(N)}\otimes r_M)
\circ
(\beta_{M,N}\otimes \id_{A})
\circ
(\id_M\otimes r_N \otimes \id_A)
\circ
(\id_{M\otimes N}\otimes (s\circ i))
\\&=
((g(r_N)\circ \psi_{N,A})\otimes (r_M\circ \beta_{A,M}^{-1} \circ (\lambda^{-1} \otimes \id_M)))
\circ
(\id_{M}\otimes ((\id_A\otimes \lambda)\circ \psi_{A,A}^{-1}\circ g(s\circ i))\otimes_N)
\circ
\beta_{M,N}
\\&\underset{\eqref{eq:gLocalModule}}{=}
((g(r_N)\circ \psi_{N,A})\otimes (r_M\circ \beta_{A,M})
\circ
(\id_{M}\otimes ((\id_A\otimes \lambda)\circ \psi_{A,A}^{-1}\circ g(s\circ i))\otimes_N)
\circ
\beta_{M,N}
\\&\underset{\eqref{eq:SplittingCompatibilityWithLambda}}{=}
((g(r_N)\circ \psi_{N,A})\otimes (r_M\circ \beta_{A,M})
\circ
(\id_{M}\otimes ((\lambda^{-1}\otimes\id_A)\circ s\circ i)\otimes_N)
\circ
\beta_{M,N}
\\&=
p_{g(N),M}
\circ
\beta_{M,N}.
\end{align*}
Hence $\cE$ is a $G$-crossed braided extension of $\cE_e=\cF^{e-\loc}_A=\cC_A^\loc$.
\end{proof}

Since our $G$-equivariant structure $\lambda$ on $A$ gave us a categorical action $(\underline{\rho}_A, \mu): \uG \to \uEqTens(\cF_A)$, we can take the $G$-equivariantization, which we denote by $(\cF_A)^{G,\lambda}$ to emphasize
that the categorical $G$-action on $\cF_A$ depends on $\lambda$.

\begin{thm}\label{thm:GaugingCommutes}
The equivariantization $(\cF_A)^{G,\lambda}$ is tensor \emph{isomorphic} to the tensor category $\cF^G_{(A,\lambda)}$ viewed as subcategories of $\cF$.
Moreover, this isomorphism restricts to a braided tensor isomorphism $\cE^{G,\lambda}\cong (\cF^G)_{(A,\lambda)}^\loc$. 
\end{thm}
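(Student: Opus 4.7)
The plan is to exhibit an explicit bijection on the underlying data that preserves all the structure needed. An object of $\cF^G_{(A,\lambda)}$ is a triple $((M,\kappa),r)$ consisting of a $G$-equivariant object $(M,\kappa)\in\cF^G$ together with a right $(A,\lambda)$-action $r\colon (M,\kappa)\otimes(A,\lambda)\to(M,\kappa)$ in $\cF^G$. I would send this to the triple $((M,r),\widetilde\kappa)$, where $(M,r)\in\cF_A$ is the underlying right $A$-module in $\cF$ and $\widetilde\kappa^g:=\kappa^g$ is reinterpreted as an arrow $g\cdot(M,r)\to(M,r)$ in $\cF_A$. Here $g\cdot(M,r)$ is computed via the induced $G$-action on $\cF_A$ constructed in \S\ref{sec:InducedActionForCommutativeCentralAlgebra} from $\underline{F}_A$ applied to the lift of $(\underline{\rho},\mu)$ determined by $\lambda$.

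The first step is to verify this assignment is bijective on objects. The condition that $r$ be a morphism in $\cF^G$ unpacks to $\kappa^g\circ g(r)=r\circ(\kappa^g\otimes\lambda^g)\circ(\psi^g_{M,A})^{-1}$, which is exactly the statement that $\kappa^g$ is a morphism in $\cF_A$ from the twisted module $g\cdot(M,r)$, whose structure map is given by \eqref{eq:TwistRightAModuleByAut}, to $(M,r)$. Moreover, the $G$-equivariance axiom \eqref{eq:EquivariantObject} for $\kappa$ is literally identical to the $G$-equivariance axiom for $\widetilde\kappa$, because the tensorator of the induced action on $\cF_A$ is inherited from $\mu$ through $\underline{F}_A$. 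The same reasoning at the level of morphisms yields bijectivity of $\Phi$ on hom-sets.

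The second step is to check that $\Phi$ is monoidal. The relative tensor product $\otimes_{(A,\lambda)}$ in $\cF^G$ is the image of the separability idempotent \eqref{eq:SeparabilityIdempotent} computed from $m_A$, a splitting $s$, and the half-braiding $\beta_{A,-}$, all viewed as morphisms in $\cF^G$ via the central structure on $(A,\lambda)$. Since the forgetful functor $\cF^G\to\cF$ is strict monoidal and sends $(A,\lambda)\mapsto A$ preserving these pieces, this idempotent maps to the separability idempotent defining $\otimes_A$ in $\cF_A$. The equivariant structure that $M\otimes_A N$ inherits from the induced action on $\cF_A$, namely $(\widetilde\kappa^g\otimes\widetilde\nu^g)\circ(\psi^g_{M,N})^{-1}$ followed by projection, coincides with the equivariant structure on $(M,\kappa)\otimes_{(A,\lambda)}(N,\nu)$ coming from \eqref{eq:TensorProductOnEquivariantization} applied in $\cF^G$ and then restricted to the image of the idempotent.

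Finally, for the braided restriction, I would use that the braiding on $\cF^G$ is obtained from the $G$-crossed braiding on $\cF$ by summing over graded components and twisting by the $G$-equivariant structure. Applied to $(A,\lambda)$ and $(M,\kappa)$, the double braiding in $\cF^G$, restricted to the $g$-graded part $M_g$ of $M$, evaluates to $\beta_{A,M_g}\circ(\lambda^g\otimes\id)\circ\beta_{M_g,A}$, which is precisely the operator appearing in the $g$-local module condition \eqref{eq:gLocalModule}. Thus the dyslectic condition $r\circ(\text{double braiding})=r$ for $(A,\lambda)\in\cF^G$ unpacks on each $g$-graded component to the $g$-local module condition defining $\cE\subset\cF_A$, yielding the braided tensor isomorphism $\cE^{G,\lambda}\cong(\cF^G)_{(A,\lambda)}^\loc$. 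The main technical obstacle will be the bookkeeping in this last step: carefully identifying how the equivariant structure $\lambda^g$ on $A$ enters the formula for braiding $(A,\lambda)$ past a $g$-graded component, since this twist is the crucial bridge between the dyslectic condition in the equivariantization and the $g$-local module condition defining $\cE$.
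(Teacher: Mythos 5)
Your proposal is correct and follows essentially the same route as the paper's proof: identify objects, morphisms, and tensor products of $(\cF^G)_{(A,\lambda)}$ and $(\cF_A)^{G,\lambda}$ via the underlying data in $\cF$, then observe that the dyslectic condition in $\cF^G$, because the $M\otimes A\to A\otimes M$ half of the double braiding acquires the $\lambda^g$ twist on the $g$-graded component while the $A\otimes M\to M\otimes A$ half is just $\beta_{A,M}$, decomposes precisely into the $g$-local conditions defining $\cE$. The "bookkeeping" you flag at the end is exactly the computation the paper writes out, and your formula for the twisted double braiding is the correct one.
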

\begin{proof}
One one side, the objects of $(\cF_A)^{G,\lambda}$ are tuples $((M,r_M),\gamma)$ where $(M,r_M)\in \cF_A$ is a right $A$-module and $\gamma=\{\gamma_g \in \cF_A(g(M)\to M)\}_{g\in G}$ is a family of $G$-equivariant $A$-module isomorphisms.
Using the definition of the $A$-action on $g(M)$ from \eqref{eq:TwistRightAModuleByAut}, this means that the $\gamma_g: g(M)\to M$ satisfy
\begin{equation}
\label{eq:AModObjectWhichIsGEquivariant}
\gamma_g
\circ
g(r_M)\circ \psi^g_{M,A}\circ (\id_{g(M)}\otimes (\lambda^g)^{-1})
=
r_M\circ\gamma_g.
\end{equation}

The morphisms of $(\cF_A)^{G,\lambda}$ are $A$-module maps which are $G$-equivariant.
This means the morphisms $f\in \cF_A(((M,r_M),\gamma^M) \to ((N,r_N),\gamma^N))$ must satisfy the two conditions
\begin{equation}
\label{eq:GEquivariantAModuleMorphisms}
f\circ r_M = r_N \circ f
\qquad\qquad
\text{and}
\qquad\qquad
\gamma^N_g\circ g(f) \underset{\eqref{eq:EquivariantMorphism}}{=} f\circ \gamma^M_g.
\end{equation}

The tensor structure on $(\cF_A)^{G,\lambda}$ is given by 
\begin{equation}
\label{eq:TensorProductOnGEquivariantAMods}
\begin{split}
((M,r_M),\gamma^M)&\otimes_{(\cF_A)^{G,\lambda}} ((N,r_N),\gamma^N)
\\&=
((M,r_M)\otimes_{(\cF_A)^{G,\lambda}}(N,r_N),p_{M,N}\circ (\gamma^M\otimes \gamma^N)\circ (\psi^g_{M,N})^{-1})
\\&=
((M\otimes_A N, p_{M,N}\circ(r_M\otimes r_N)),p_{M,N}\circ (\gamma^M\otimes \gamma^N)\circ (\psi^g_{M,N})^{-1})
\end{split}
\end{equation}
where $p_{M,N}$ is the separability idempotent \eqref{eq:SeparabilityIdempotent}.
%where for right $A$-module maps $f: M_1 \to M_2,g: N_1\to N_2$, $f\otimes_A g = p_{M_2,N_2}\circ (f\otimes g)$, where $p$ is the separability idempotent \eqref{eq:SeparabilityIdempotent}.

On the other side, the objects of $(\cF^G)_{(A,\lambda)}$ are tuples $((M,\gamma),r_M)$ where $(M,\gamma)\in \cF^G$, so $M\in \cF$ and $\gamma=\{\gamma_g \in \cF(g(M)\to M)\}_{g\in G}$ is a family of $G$-equivariant isomorphisms, and $r_M \in \cF^G((M,\gamma) \otimes_{\cF^G} (A,\lambda)\to (M,\gamma))$ is a $G$-equivariant map.
Thus by 
\eqref{eq:EquivariantMorphism}
and
\eqref{eq:TensorProductOnEquivariantization},
\begin{equation*}
%\label{eq:MGEquivariant}
r_M\circ (\gamma_g\otimes \lambda^g)\circ (\psi^g_{M,A})^{-1}
=
\gamma_g\circ g(r_M),
\end{equation*}
which is equivalent to \eqref{eq:GEquivariantAModuleMorphisms}.
Now $r_M$ is an $(A,\lambda)$-module map, which is exactly the requirement that $(M,r_M)\in \cF_A$.
Hence the object $((M,\gamma),r_M)\in (\cF^G)_{(A,\lambda)}$ can be canonically identified with $((M,r_M),\gamma)\in (\cF_A)^{G,\lambda}$.

Now the morphisms of $(\cF^G)_{(A,\lambda)}$ are $G$-equivariant maps which are compatible with the $(A,\lambda)$-module structures, which is exactly the two conditions in \eqref{eq:GEquivariantAModuleMorphisms}.
Thus we may canonically identify the morphisms of $(\cF^G)_{(A,\lambda)}$ with the morphisms of $(\cF_A)^{G,\lambda}$, and this identification preserves composition.

The tensor structure on $(\cF^G)_{(A,\lambda)}$ is given by
\begin{align*}
((M,\gamma^M),r_M)&\otimes_{(\cF^G)_{(A,\lambda)}}((N,\gamma^N),r_N)
\\&=
((M,\gamma^M)\otimes_{(A,\lambda)}(N,\gamma^N) , p_{M,N}\circ (r_M \otimes r_N))
\\&=
((M\otimes_A N,
p_{M,N}\circ (\gamma^M\otimes_{\cF^G}\gamma^N)) , p_{M,N}\circ (r_M \otimes r_N))
\\&=
((M\otimes_A N,
p_{M,N}\circ (\gamma^M\otimes\gamma^N)\circ (\psi^g_{M,N})^{-1}) , p_{M,N}\circ (r_M \otimes r_N)).
\end{align*}
Thus we see from \eqref{eq:TensorProductOnGEquivariantAMods} that our identification of objects from $(\cF^G)_{(A,\lambda)}$ and $(\cF_A)^{G,\lambda}$ identifies the tensor products on the nose.
Hence the tensor categories $(\cF^G)_{(A,\lambda)}$ and $(\cF_A)^{G,\lambda}$ are canonically tensor isomorphic.

It remains to show that our tensor isomorphism descends to a braided tensor isomorphism $\cE^{G,\lambda}\cong (\cF^G)_{(A,\lambda)}^\loc$.
Suppose $(M,r,\gamma)\in (\cF^G)_{(A,\lambda)}\cong (\cF_A)^{G,\lambda}$.
Since $\cF$ is $G$-graded and $A\in \cF_e$, we can decompose $(M,r)\in \cF_A$ as $\bigoplus_{g\in G} (M_g, r_{g})$ where $(M_g,r_g)\in (\cF_g)_A$.
Now the braiding in $\cF^G(M\otimes A \to A\otimes M)$ is given by
$$
\bigoplus_{g\in G} (\lambda^g \otimes \id_{M_g} )\circ \beta_{M_g, A} \in \bigoplus_{g\in G} \cF( M_g\otimes A \to A\otimes M_g),
$$
where $\beta$ is the $G$-crossed braiding on $\cF$.
Since $A\in \cF_e$, the braiding in $\cF^G(A\otimes M \to M\otimes A)$ is exactly $\beta_{A,M}$.
Hence $(M,r,\gamma)$ is local in $(\cF^G)_{(A,\lambda)}\cong (\cF_A)^{G,\lambda}$ if and only if for each $g\in G$,
$$
r_{g}\circ \beta_{A,M_g} \circ (\lambda^g \otimes \id_{M_g} )\circ \beta_{M_g, A}
=
r_g,
$$
which is exactly the condition that the component module $(M_g,r_g) \in \cE_g=\cF_A^{g-\loc}$ for each $g\in G$.
Now the condition that $(M,r,\gamma)$ is $G$-equivariant is exactly the condition that $(M,r,\gamma)\in \cE^{G,\lambda}$.
Hence our isomorphism restricts to a tensor isomorphism $\cE^{G,\lambda}\cong (\cF^G)_{(A,\lambda)}^\loc$.
Since the braidings on both sides are exactly the $G$-crossed braidings on components, the tensor isomorphism $\cE^{G,\lambda}\cong (\cF^G)_{(A,\lambda)}^\loc$ is braided.
\end{proof}

In summary, we have the following commutative diagram, where we write $\cF_A^{G-\loc}$ for $\cE$ to keep the notation consistent.
\begin{equation}
\label{eq:OodlesOfCategories}
\begin{tikzcd}
&
\cF^G
\arrow[from=dd, hook'] 
\arrow[tail, shift left]{dl}
\arrow[shift left]{rr}{-\otimes (A,\lambda)} 
&&
\cF_A^G
\arrow[dashed, tail, shift left]{ll}{} 
\arrow[from=dd, hook'] 
\arrow[tail, shift left]{dl}
&&
(\cF_A^{G-\loc})^G
\arrow[hook']{ll}{} 
\arrow[tail, shift left]{dl}
\\
\cF
\arrow[dashed, shift left]{ur}
\arrow[shift left,crossing over]{rr}[near end]{-\otimes A} 
&& 
\cF_A 
\arrow[dashed, tail, crossing over, shift left]{ll}{}
\arrow[dashed, shift left]{ur}
&& 
\cF_A^{G-\loc}
\arrow[hook', crossing over]{ll}{} 
\arrow[dashed, shift left]{ur}
\\
&
\cC^G
\arrow[shift left]{rr}[near end]{-\otimes (A,\lambda)} 
\arrow[tail, shift left]{dl}
&&
\cC_A^G
\arrow[dashed, tail, shift left]{ll}{} 
\arrow[tail, shift left]{dl}
&&
(\cC_A^{\loc})^G
\arrow[hook']{uu}{} 
\arrow[hook']{ll}{} 
\arrow[tail, shift left]{dl}
\\
\cC
\arrow[hook]{uu}{} 
\arrow[dashed, shift left]{ur}
\arrow[shift left]{rr}{-\otimes A} 
&&  
\cC_A
\arrow[hook', crossing over]{uu}{} 
\arrow[dashed, tail, shift left]{ll}{} 
\arrow[dashed, shift left]{ur}
&&
\cC_A^{\loc} 
\arrow[hook']{ll}{} 
\arrow[hook', crossing over]{uu}{} 
\arrow[dashed, shift left]{ur}
\end{tikzcd}
\end{equation}

In the diagram \eqref{eq:OodlesOfCategories}, 
\begin{itemize}
\item
a hook arrow $\hookrightarrow$ denotes a fully faithful functor,
\item
a tail on an arrow $\rightarrowtail$ denotes a forgetful functor, and
\item
a dashed arrow $\dashrightarrow$ means a functor which is not a tensor functor.
%\item
%a two headed arrow $\twoheadrightarrow$ means a dominant functor, and
\end{itemize}

\section{Examples}
\label{sec:Examples}

We now provide many examples of symmetry breaking from anyon condensation.
We begin by reviewing the Landau theory of symmetry breaking phase transitions in \S\ref{sec:LandauTheory}, which is trivial from the categorical perspective. 
We then discuss the toric code in \S\ref{sec:ToricCode}, which is the simplest example of a topological order in a condensed matter system. 
Here we already observe non-trivial categorical symmetry breaking when condensing the $e$ anyon.
We then continue with $G$-stable actions in \S\ref{sec:StableActions}, where $g =\id_\cC$ for all $g\in G$. 

In \S\ref{subsection:UniversalExample} below, we give a universal example using Drinfeld doubles of finite groups, which shows that any short exact sequence can be realized as the obstruction \eqref{eq:ShortExactSequence}. 
Using this framework, we provide examples of short exact sequences which do not split, but the induced action of $G$ permutes anyons.
We then include some interesting scenarios where symmetry must be either automatically categorically preserved or broken in \S\ref{sec:AutomaticallyPreservedOrBroken}.

%A connected \'etale algebra in $\cC$ is called \emph{Galois} if $A^{\Aut(A)}=1$, i.e.\  the fixed point of $A$ under $\Aut(A)$ is the trivial algebra.
%If $\cC$ is modular, we call $A$  \emph{Lagrangian} if $\cC_A^{\mathrm{loc}}\cong\fdVec$.

%%%%%%%%%%%%%%%%%%%%%%%%%%%%%%%%%%%%%%%%%%%%%%%%%%%%%%%%%%%%%%
\subsection{Landau theory}
\label{sec:LandauTheory}

The Landau theory of symmetry breaking phase transitions is the simplest example. 
We consider the continuous phase transition between two `trivial' phases with no anyon excitations, with symmetry group $G$ which contains a subgroup $H\subset G$. 
If both phases are gapped, they both correspond to the trivial MTC $\cC_{\text{triv}}=\Vec$ within the categorical framework, where all gapped excitations are local quasiparticles (the trivial object $1$) obeying Bose statistics. 
The associated continuous phase transition between these two phases, described by the Landau-Ginzberg-Wilson paradigm, is driven by condensation of these local bosonic quasiparticles, which corresponds to the trivial connected \'{e}tale algebra $A=1$. 

Since there are no nontrivial anyons involved here, the first obstruction always vanishes:
\begin{equation*}
g(A)\simeq A
\qquad\qquad
\forall g\in G. 
\end{equation*}
Since $\Aut_\cC(A)=1$ in these trivial phases, the second obstruction also vanishes, since the following short exact sequence
\begin{equation*}
\begin{tikzcd}
1 \arrow[r] 
& 
1 \arrow[r, "\iota"] 
& 
H \arrow[r, "\pi"] 
& 
H \arrow[r]
& 
1
\end{tikzcd}
\qquad\qquad
\forall H\subseteq G
\end{equation*}
always splits for any subgroup $H\subset G$. 
This indicates that a trivial phase preserving symmetry group $G$ can be driven into another trivial phase whose symmetry group can be any subgroup $H$ of $G$, via a continuous phase transition from boson condensation. 
This is exactly the symmetry-breaking phase transitions of Landau, described by Landau-Ginzburg theory of fluctuating local order parameters defined in the manifold $G/H$ \cite{Mermin1979}.

Even without anyons and topological orders, the presence of symmetry can also lead to many distinct `trivial' phases, i.e.\ symmetry protected topological (SPT) phases. 
SPT phases can be viewed as a special case of SETO when the corresoponding MTC is trivial. 
Within our categorical framework, the symmetry breaking rules for a trivial MTC do not depend on its SPT classification. 
In other words, for a trivial MTC, its symmetry group $G$ can break down to any subgroup $H\subset G$, independent of the SPT phase to which it belongs.

%%%%%%%%%%%%%%%%%%%%%%%%%%%%%%%%%%%%%%%%%%%%%%%%%%%%%%%%%%%%%%
\subsection{Toric code}
\label{sec:ToricCode}

The toric code \cite{MR1951039} is the simplest example of a topological order in a condensed matter system. 
Its MTC has 4 simple objects
\begin{equation*}
\Irr(\cC_\text{Toric~Code})=\{1,e,m,\epsilon:=e\otimes m\}=\{1,e\}\times\{1,m\},
\end{equation*}
and its modular $S$ and $T$ matrices are given by
\begin{equation*}
S=
\frac12\begin{pmatrix}
1&1&1&1
\\
1&1&-1&-1
\\
1&-1&1&-1
\\
1&-1&-1&1
\end{pmatrix}
\qquad\qquad
T=
\begin{pmatrix}
1&&&
\\
&1&&
\\
&&1&
\\
&&&-1
\end{pmatrix}.
\end{equation*}
Clearly the $e$ and $m$ particles obey Bose statistics, while their composite $\epsilon=e\otimes m$ obeys Fermi statistics. 

Below, we discuss 3 examples involving the toric code preserving a $G=\bbZ_2=\langle g\rangle$ symmetry, where we always choose to condense the boson $e$, corresponding to the connected \'{e}tale algebra $A=1\oplus e$. 
We discuss a few distinct $G=\bbZ_2$ symmetry enriched toric codes, which realize the first obstruction in Example \ref{ex:ToricCode1stObstruction}, the second obstruction in Example \ref{ex:ToricCode2ndObstruction}, and the absence of any obstructions in Example \ref{ex:ToricCodeNoObstruction}.

\begin{ex}
\label{ex:ToricCode1stObstruction}
Consider the $\bbZ_2$ symmetry enriched toric code where the $e$ and $m$ particles are permuted by the action of $\bbZ_2$ symmetry generator $g$:
\begin{equation*}
e\overset{g}\longleftrightarrow m.
\end{equation*}
In this case, the symmetry action $g$ does not preserve the \'etale algebra $A=1\oplus e$, so symmetry is broken at the first level.
This means in a $\bbZ_2$ symmetry enriched toric code where the $\bbZ_2$ symmetry permutes $e$ and $m$, condensing the $e$ particle will necessarily break the $\bbZ_2$ symmetry. 
\end{ex}

\begin{ex}
\label{ex:ToricCode2ndObstruction}
Consider a $\bbZ_2$ symmetry enriched toric code with a nontrivial $\bbZ_2$ symmetry fractionalization class \cite{Essin2013,1410.4540,Tarantino2016}. 
Symmetry fractionalization in a generic SETO is classified by the projective action of symmetry on the anyon $a$ in the SETO:
\begin{equation*}
U_g^{(a)}U_h^{(a)}=\omega_a(g,h)U_{gh}^{(a)},
\qquad
\qquad
\omega_a(g,h)=\frac{S_{a,\omega(g,h)}S_{1,1}}{S_{1,a}S_{1,\omega(g,h)}}\in U(1),
\qquad
\qquad
\omega(g,h)\in\text{Inv}(\cC),
\end{equation*}
which is classified by $[\omega]\in H^2(G,\text{Inv}(\cC))$.

We now specialize to the fractionalization class for $G=\bbZ_2$ symmetry generated by $g$ given by
\begin{equation*}
\omega(g,g)=m
\qquad
\Longrightarrow
\qquad
\omega_e(g,g)=2S_{e,m}=-1. 
\end{equation*}
Since the $\bbZ_2$ symmetry considered here does not permute anyons, the first obstruction vanishes. 
The second obstruction is then captured by the short exact sequence
\begin{equation*}
1\to\Aut_{\cC}(A)\simeq\bbZ_2\to\bbZ_4\to G=\bbZ_2\to 1,
\end{equation*}
which does not split.
Hence the second obstruction does not vanish, and the $\bbZ_2$ symmetry must be broken in the toric code with this symmetry fractionalization class if $e$ is condensed. 
Physically, this means if a symmetry acts projectively on the anyon we wish to condense, this symmetry must be broken by the anyon condensation.
\end{ex}

%\nn{
%Don't have an example where you permute anyons, but first level is %preserved and only obstructed at second level.
%We'll see this as an application of the next example.
%}

\begin{ex}
\label{ex:ToricCodeNoObstruction}
Finally, we consider the toric code with a different $G=\bbZ_2 = \{1,g\}$ symmetry fractionalization class, where we have
\begin{equation*}
\omega(g,g)=e
\qquad
\Longrightarrow
\qquad
\omega_e(g,g)=2S_{e,e}=1. 
\end{equation*}
Twisting the trivial action by this 2-cocycle gives a $\bbZ_2$-action on $\cZ(\bbZ_2)$. Note that since $e$ centralizes $A=1_\cC\oplus e$, by Section \ref{sec:StableActions} the short exact sequence is

\begin{equation}
\label{eq:ToricCodeExactSequence}
1\to \Aut_\cC(A)\cong\bbZ_2 \to \Aut_{\cC^G}(I(A))\cong\bbZ_2\times\bbZ_2 \to G=\bbZ_2\to 1\,.
\end{equation}

Thus we see in this case, the obstruction vanishes at both levels since \eqref{eq:ToricCodeExactSequence} splits.
This means the $\bbZ_2$ symmetry can be preserved after we condense the $e$ particle and drive the SETO into a trivial gapped phase $\cC_{\text{triv}}=\Vec$ with no anyons. However, as we will show below, there are two inequivalent splittings of this exact sequence, resulting in a $Z_2$-SPT state with protected edge states versus a trivial state with no edge states after condensing $A=1_\cC\oplus e$. 

Since $A=1_\cC\oplus e$, the two automorphism within $\Aut_\cC(A)$ are given by trivial automorphism $\operatorname{id}_{A}$ and the notrivial one $\operatorname{id}_{1_\cC}\oplus -\operatorname{id}_{e}$. Physically the nontrivial automorphism can be understood as braiding one $m$ particle around the $e$ particle being condensed, which gives rise to a minus sign. Therefore we can conveniently label the automorphism group as
\begin{equation*}
\Aut_\cC(A)=\{\operatorname{id}_{A},\operatorname{id}_{1_\cC}\oplus -\operatorname{id}_{e}\}\cong\{1,m\}\cong\bbZ_2.
\end{equation*}

In this case, the corresponding $\bbZ_2$-crossed braided extension is given by $\cF:=\cZ_{\Vec(\bbZ_4)}(\Vec(\bbZ_2))$, the relative center of $\Vec(\bbZ_2)$ in $\Vec(\bbZ_4)$. The associated gauged theory is $\cD:=\cF^{\bbZ_2}\cong \cZ(\Vec(\bbZ_4))$, whose anyons are given by
\begin{equation}
\label{eq:AnyonsInDoubleZ4}
\{1,\alpha,\alpha^2,\alpha^3\}\times\{1,m,m^2,m^3\}.
\end{equation}
The two generating anyons $\alpha$ and $m$ both have a trivial self-braiding, and obey a $\bbZ_4$ fusion rule:
\begin{equation*}
\alpha^4\cong m^4\cong1. 
\end{equation*}
Here, $m$ is exactly the $m$ particle in the toric code before gauging, while $\alpha^2$ corresponds to the $e$ particle before gauging. 

The two inequivalent splittings correspond to the following two choices of $I(A)$:
\begin{equation*}
(A,\lambda_1)\cong I(A)=1_\cD\oplus \alpha^2
\end{equation*}
and
\begin{equation*}
(A,\lambda_2)\cong I(A)=1_\cD\oplus s^2
\qquad\text{where}\qquad
s:=\alpha\otimes m.
\end{equation*}
Both splittings share the same automorphism group
\begin{equation*}
\Aut_{\cC^G}(I(A))
=
\Aut_{\cF^G}(I(A))
=
\Aut_\cD(I(A))
\cong\{1,\alpha\}\times\{1,m\}
\cong
\bbZ_2\times\bbZ_2.
\end{equation*}

We provide the following diagram of the relevant phases, where squiggly arrows denote condensing by the labeled algebra, and other arrows denote gauging the $\bbZ_2$ symmetry.
\begin{equation*}
  \begin{tikzcd}
   & 
   \cC_{A}^\loc=\fdVec 
   \arrow[]{rd}{\text{gauge}} 
   &  
   \\
  \cC=\cZ(\Vec(\bbZ_2)) 
  \arrow[squiggly]{ru}{A=1\oplus e} 
  & 
  \cD^{\mathrm{loc}}_{1_\cD\oplus\alpha^2}\cong\cZ(\Vec(\bbZ_2))
   \arrow[<-]{u}{\text{gauge}}
   & 
   \cD^{\mathrm{loc}}_{1_\cD\oplus s^2} \cong \cZ(\mathrm{Sem})
   \\
   & 
   \cD=\cZ(\Vec(\bbZ_4)) 
   \arrow[squiggly]{u}{(A,\lambda_1)} 
   \arrow[squiggly, swap]{ru}{(A,\lambda_2)}  
   \arrow[<-]{ul}{\text{gauge}} 
   & 
  \end{tikzcd}
\end{equation*}

The two inequivalent splittings of \eqref{eq:ToricCodeExactSequence} have a clear physical difference in the outcome of the anyon condensation \cite{Jiang2017}. The two splittings are characterized by different $g$-defects of the $\bbZ_2$ symmetry group:
\begin{equation*}
X_g^1\cong\alpha
\qquad\qquad
X_g^2\cong s=\alpha\otimes m. 
\end{equation*}
In particular $X_g^1\cong\alpha$ is a boson for the first splitting, and $X_g^2\cong s$ is a semion for the second splitting. 
As a result, they give rise to different $\bbZ_2$-symmetric gapped phases after condensing $A=1_\cC\oplus e$: 
\begin{enumerate}[(1)]
\item 
For the first splitting, each $e$ particle carries the trivial (one-dimensional) linear representation of the $G=\bbZ_2$ symmetry group. 
It results in a trivial $\bbZ_2$-symmetric phase with no edge states, leading to a toric code by gauging the $\bbZ_2$ symmetry
\begin{equation*}
\cD^{\mathrm{loc}}_{1_\cD\oplus\alpha^2}\cong\{1,\alpha\}\times\{1,m^2\}\cong\cZ(\Vec(\bbZ_2)).
\end{equation*}
\item
For the 2nd splitting, each $e$ particle carries the nontrivial (one-dimensional) linear representation of the $G=\bbZ_2$ symmetry group. 
It results in a nontrivial $\bbZ_2$-SPT phase with protected edge states, leading to a double semion theory after gauging the $\bbZ_2$ symmetry
\begin{equation*}
\cD^{\mathrm{loc}}_{1_\cD\oplus s^2}\cong\{1,s\}\times\{1,m^2\}\cong\text{double semion}.
\end{equation*}

\end{enumerate}

%\nn{Fix Below}

%Since $A=1_\cC\oplus e$, the two splittings $\lambda_1,\lambda_2$ correspond directly to equivariant algebra structures via $\lambda^{g}_1=\operatorname{id}_{A}$ and $\lambda^g_2=\operatorname{id}_{1_\cC}\oplus -\operatorname{id}_{e}$ is the nontrivial automorphism of $A=1_\cC\oplus e$ \eqref{eq:ToricCodeExactSequence} correspond to the algebra objects 
%$(A,\lambda_1)=(1_\cC\otimes 1_\cC)\oplus (e\otimes e)$
%and $(A,\lambda_2)=(1_\cC\otimes 1_\cC)\oplus (1_\cC\otimes e)$.

\end{ex}

%We start with the toric code $\cZ(\bbZ_2)$ and the \'etale algebra generated by the boson $e$, i.e.\ $A=1\oplus e$. 

%%%%%%%%%%%%%%%%%%%%%%%%%%%%%%%%%%%%%%%%%%%%%%%%%%%%%%%%%%%%%%
\subsection{General stable actions}
\label{sec:StableActions}

We now consider general stable actions, generalizing Example \ref{ex:ToricCodeNoObstruction}. Let $\cC$ be a fusion category. 
A categorical action $\uG\rightarrow \uEqTens(\cC)$ is called \textit{stable} if each $g\in G$ acts as the identity monoidal functor. 
Such an action is specified up to equivalence by a 2-cocycle $\omega\in Z^{2}(G, \Aut_{\otimes}(\id_\cC))$, where $\Aut_{\otimes}(\id_\cC)=\pi_1(\uEqTens(\cC))$ is the monoidal natural automorphisms of the identity functor, and the action of $G$ on $\Aut_{\otimes}(\id_\cC)$ is trivial. 
The 2-cocycle describes the tensorator simply by $\mu^{a}_{g,h}=\omega(g,h)\id_{a}$ for any $a\in \cC$.

Clearly $g(A)=A$ as algebras for all $g\in G$. 
When $\cC$ is nondegenerately braided,  $\pi_2(\uEqBr(\cC))\cong \Inv(\cC)$ as `ring operators' as in Remark \ref{rem:Pi2OfEqBr(C)}.
Note that each ring operator gives an element in the center $Z(\Aut_{\cC}(A))$, and thus $\omega$ yields a 2-cocycle $\widetilde{\omega}\in Z^{2}(G, Z(\Aut_{\cC}(A)))$. 

Directly from the definitions, it is clear that we can find an equivariant algebra structure on $A$ precisely when we have a $\widetilde{\omega}$-projective homomorphism $\psi: G\rightarrow \Aut_{\cC}(A)$. 
The short exact sequence in this case is
$$
\begin{tikzcd}
1 \arrow[r] 
& 
\Aut_{\cC}(A) \arrow[r, "i"] 
& 
\Aut_{\cC}(A)\times_{\widetilde{\omega}} G \arrow[r, "p"] 
& 
G \arrow[r]
& 
1
\end{tikzcd}
$$
where $\Aut_{\cC}(A)\times_{\widetilde{\omega}} G$ is the twisted direct product. 
This sequence has a splitting precisely when we have an $\tilde{\omega}$-projective homomorphism $\psi: G\rightarrow \Aut_{\cC}(A)$. In this case the splitting is given by $g\mapsto (\psi(g), g)\in \Aut_\cC(A)\times_{\widetilde{\omega}}G$.

%%%%%%%%%%%%%%%%%%%%%%%%%%%%%%%%%%%%%%%%%%%%%%%%%%%%%%%%%%%%%%%%%%%%%%%%%
\subsection{Universal example from an exact sequence of groups}
\label{subsection:UniversalExample}

Suppose we have a short exact sequence of finite groups as in \eqref{eq:ShortExactSequenceOfGroups}:
$$
\begin{tikzcd}
1 \arrow[r] 
& 
N \arrow[r, "i"] 
& 
E \arrow[r, "p"] 
& 
G \arrow[r]
& 
1.
\end{tikzcd}
$$
Consider the non-degenerately braided fusion category $\cC:=\cZ(\Vec(N))$.
%and the connected \'{e}tale algebra $A:=\cO(N)\in\Rep(N)\subseteq \cZ(\Vec(N))$.
Since $\Vec(E)$ is a $G$-extension of $\Vec(N)$, by \cite[Thm~3.3]{MR2587410}, the relative center $\cF:=\cZ_{\Vec(E)}(\Vec(N))$ is a $G$-crossed braided extension of $\cZ(\Vec(N))$.
Furthermore, the equivariantization $\cF^{G}=\cZ_{\Vec(H)}(\Vec(N))^{G}$ is braided tensor equivalent to $\cZ(\Vec(E))$, which is a gauging of $\cZ(\Vec(N))$ by $G$. The inverse construction to gauging is taking local modules of function algebras.
So condensing $\cO(G)\in \Rep(E)\subseteq \cZ(\Vec(E))$, we have $\cZ(\Vec(E))^{\loc}_{\cO(G)}$ is braided tensor equivalent to $\cZ(\Vec(N))$.

%Consider the inclusion of \'{e}tale algebras $\cO(G)\subseteq \cO(E)$ in $\Rep(E)\subset \cZ(\Vec(E))$ induced by the inclusion $\pi_{*}: \Rep(G)\rightarrow \Rep(E) $. 
%Note that $\cO(E)$ is Lagrangian.
Recall that we have an adjoint pair of induction and restriction functors
$$
\begin{tikzcd}
\Rep(E)
\ar[shift left]{rr}{\Res^E_N}
&&
\Rep(N)
\ar[shift left]{ll}{\Ind_N^E}.
\end{tikzcd}
$$
Since $\Res_{N}^E$ is a braided tensor functor, we have a braided equivalence
$$
\Rep(N)\cong \Rep(E)_{\Ind_N^E(\triv_N)} = \Rep(E)_{\cO(E/N)} = \Rep(E)_{\cO(G)}%^\loc
 .
$$
This equivalence identifies $\cO(E/N) = \Ind_{N}^E(\triv_N)$ and $\cO(E) = \Ind_N^E(\cO(N))$.
(See also \cite[Ex.~4.15.3]{MR3242743}.)

%By \cite[\nn{where?}]{MR3039775}, $\cO(E)$ corresponds to Lagrangian \'{e}tale algebra $A\in \cZ(\Vec(N)) \cong \cZ(\Vec(E))^{\loc}_{\cO(G)}$.
%Since $A\in \Rep(N)\subseteq \cZ(\Vec(N))$, it must be isomorphic $\cO(N)$ \nn{why?}.

%We now consider $I:\cZ(\Vec(N)) = \cC \to \cC^G = \cZ(\Vec(N))^G$.
%We claim that $I(A) = I(\cO(N)) = \cO(E)$ \nn{why?}.

%We now examine the exact sequence from \eqref{eq:ShortExactSequence}:
%$$
%\begin{tikzcd}
%1
%\ar[r,""]
%&
%\Aut_\cC(A) 
%\ar[r,"\iota"]
%&
%\Aut_{\cC^G}(I(A))
%\ar[r,"\pi"]
%&
%G
%\ar[r,""]
%&
%1.
%\end{tikzcd}
%$$
Set $A:=\cO(N)\in \Rep(N)\subset \cZ(\Vec(N))=\cC$.
As above, 
$$
I(A) = \Ind_{N}^E(\cO(N)) = \cO(E) \in \Rep(E)\subset \cC^G \subset \cF^G \cong \cZ(\Vec(E)).
$$
Hence we have isomorphisms $\Aut_\cC(A)\cong N$ and $\Aut_{\cC^G}(I(A))=\Aut_{\cF^G}(I(A))  \cong E$.
Explicitly, these isomorphisms are both instances of the fact that $K =\Aut_{\Rep(K)}(\cO(K))$ for a finite group $K$.
Indeed, since $\cO(K)$ is a left $K$-module via $(k\cdot f)(\ell) := f(k^{-1}\ell)$, and since left multiplication commutes with right multiplication, we get a $K$-equivariant map $\theta_k \in \Aut_{\Rep(K)}(\cO(K))$ by $(\theta_k f)(\ell) = f(\ell k)$.

%is given by
%$\bigoplus_\pi f\otimes \pi \mapsto $
%$$
%\cO(K)
%=
%\bigoplus_{\pi \in\Irr(\Rep(K))}
%\Hom_{\Rep(K)}(\pi \to \cO(K)) \otimes \pi
%\cong
%\bigoplus_{\pi \in\Irr(\Rep(K))}
%\Hom(\pi \to \triv_K) \otimes \pi
%$$

%Since $I(A) \cong \cO(E)\subset \cC^G \subset \cF^G \cong \cZ(\Vec(E))$ and $\cC_e \subset \cF_e$ is a fully faithful inclusion, we have $\Aut_{\cC^G}(I(A))=\Aut_{\cF^G}(I(A))  \cong E$.
We now check that the following diagram commutes:
$$
\begin{tikzcd}
1
\ar[r,""]
&
\Aut_\cC(A) 
\ar[r,"\iota"]
\ar[d,<->,"\cong"]
&
\Aut_{\cC^G}(I(A))
\ar[r,"\pi"]
\ar[d,<->,"\cong"]
&
G
\ar[r,""]
\ar[d,equal]
&
1
\\
1 \arrow[r] 
& 
N \arrow[r, "i"] 
& 
E \arrow[r, "p"] 
& 
G \arrow[r]
& 
1.
\end{tikzcd}
$$
For the square on the left, notice that as left $N$-modules, we may identify
\begin{equation}
\label{eq:IdentificationOfI(A)WithO(Ng)}
I(A) = \cO(E) \cong \bigoplus_{g\in G} \cO(Ng^{-1})
=
\bigoplus_{g\in G} g(\cO(N)),
\end{equation}
%as left $N$-modules, 
and we may identify $\cO(Ng^{-1})$ with $g(\cO(N))$ as algebra objects in $\Rep(N)$.
Recall that for $\theta_n \in \Aut_{\Rep(N)}(\cO(N))=N$, $\iota(\theta_n) = \bigoplus_{g\in G} g(\theta_n)$ from Proposition \ref{prop:InjectiveGroupHom}.
Under the identification \eqref{eq:IdentificationOfI(A)WithO(Ng)}, the action of $g(\theta_n)$ on $g(\cO(N)) \cong \cO(Ng^{-1})$ is via $\theta_{gng^{-1}}\in \Aut(\cO(Ng))$.
Hence the left square commutes.

Now in Proposition \ref{prop:DefineSurjectiveGroupHom}, we defined $\pi$ on $f\in \Aut_{\cC^G}(I(A))$
by $\pi(f)$ is the unique $g\in G$ such that 
$0\neq f_{e,g}: g(A) \to A$.
Under the identification $I(A) = \cO(E)$ and $g(\cO(N))=\cO(Ng^{-1})$, for every $x\in E=\Aut_{\Rep(E)}(\cO(E))$, there is a unique $g\in G$ such that $x\in Ng$.
So if $f\in \cO(E)$ is supported on $Ng^{-1}$, then $\theta_x f$ is supported on $N$.
Mapping $x\in E$ to the unique $g\in G$ such that $x\in Ng$ is exactly the quotient map $E \to G$.
Hence the square on the right commutes.

\begin{rem}
Notice that we have a canonical isomorphism $\pi_2(\uEqBr(\Rep(N)) \cong Z(N)$.
Indeed, since $\Rep(N)_{\rm ad} = N/Z(N)$ \cite[Ex.~4.14.6]{MR3242743} and $\Rep(N)$ is faithfully graded by the dual group $Z(N)^{\widehat{\,\,}}$, the characters of the grading group of $\Rep(N)$ are canonically identified with $Z(N)^{\widehat{\widehat{\,\,}}}\cong Z(N)$.
Given a homomorphism $G\rightarrow \Out(N)$, one obtains a 3-cohomology class $o_{3}\in H^{3}(G, Z(N))$, which vanishes if and only if there exists a short exact sequence $1\rightarrow N\rightarrow E\rightarrow G\rightarrow 1$ for some $E$ which implements this outer action. 
This 3-cohomology class agrees with the $o_{3}$ obstruction associated to $G\to \EqBr(\Rep(N))$.
\end{rem}

\begin{ex}
In previous examples with an obstruction in the second level, we always considered a trivial action of the symmetry which does not permute anyons. 
Below, making use the universal example from a short exact sequence, we construct an explicit example where (i) the symmetry permutes anyons, (ii) the first obstruction vanishes, and (iii) the second obstruction does not vanish.

We consider a $G=\bbZ_2$ SETO with
\begin{equation}
\cC=\cZ(\Vec(\bbZ_3))\boxtimes\cZ(\Vec(\bbZ_2)).
\end{equation}
Physically, $\cC$ corresponds to one layer of $\bbZ_3$ gauge theory $\cZ(\Vec(\bbZ_3))$ stacked on top of the toric code $\cZ(\Vec(\bbZ_2))$ (or $\bbZ_2$ gauge theory). 
We label the gauge charge of $\bbZ_3$ gauge theory as $\tilde e$, and its gauge flux as $\tilde m$, satisfying fusion rules
\begin{equation*}
\tilde e\otimes\tilde e\otimes\tilde e\equiv\tilde e^3\cong\tilde m^3\cong 1.
\end{equation*}
We still use $e$ and $m$ to label the gauge charge and gauge flux in toric code $\cZ(\Vec(\bbZ_2))$. 
The $\bbZ_2$ symmetry $g$ permutes anyons $\tilde e$ and ${\tilde e}^2\cong \tilde{e}^{-1}$, and belongs to a nontivial symmetry fractionalziation class
\begin{equation*}
\omega(g,g)=m
\qquad
\Longrightarrow
\qquad
\omega_e(g,g)=-1.
\end{equation*}
We condense the connected \'etale algebra
\begin{equation*}
A:=
\cO(\bbZ_3)\boxtimes \cO(\bbZ_2)
=
1\oplus \tilde e\oplus\tilde e^2\oplus e\oplus (e\otimes\tilde e)\oplus (e\otimes\tilde e^2).
\end{equation*}
Clearly $g(A)\cong A$, and the first obstruction vanishes.
At the second level, since $\Aut_\cC(A)\cong\bbZ_2\times\bbZ_3\simeq\bbZ_6$, the short exact sequence \eqref{eq:ShortExactSequence} is given by
\begin{equation*}
1\to \Aut_\cC(A)\simeq\bbZ_6 \to \operatorname{Dic}_{12} \cong \bbZ_3 \rtimes \bbZ_4 \to G=\bbZ_2 \to 1,
\end{equation*}
which does not split, leading to an obstruction at the second level. 
Physically, since the $\bbZ_2$ symmetry $g$ acts projectively on the anyon $e$, condensing $e\otimes\tilde e$ will also condense $e$, and therefore the $\bbZ_2$ symmetry must be broken.
\end{ex}

\begin{rem}
The universal example can be modified by introducing a cocycle on $E$ as follows.
Given $\omega\in Z^3(E, \bbC^\times)$, we set $\cC:=\cZ(\Vec(N,\omega|_N))$.
Note that $\cC$ has a $G$-crossed braided extension $\cF$ such that $\cF^G$ is braided equivalent to $\cZ(\Vec(E,\omega))$. 
Indeed, the discussion at the beginning of this section only uses the fact   that there is a canonical Tannakian subcategory   $\Rep(H)\subset \cZ(\Vec(H,\omega))$ for $H=N,E$, and the exact sequence does not change under the presence of cocycles.
We also note the relation to the Lyndon--Hochschild--Serre spectral sequence, see \cite[Appendix by E.\ Meir]{MR2677836}.
  
If the obstruction vanishes, one gets, for every splitting $\sigma:G\hookrightarrow E$, a $G$-graded extension of  the condensed theory $\cC_{\cO(N)}^\loc\cong\Vec$ which is necessarily given by $\Vec(G,\upsilon)$ for some $[\upsilon]\in H^3(G,\bbC^\times)$ given by the pull back $[\upsilon]=\sigma^\ast[\omega]=[\omega\circ \sigma^{\times 3}]$ along the splitting $\sigma:G\hookrightarrow E$.

Now it may be the case that given a fixed exact sequence and a fixed $\omega\in Z^3(E,\bbC^\times)$, two inequivalent splittings yield two non-cohomologous cocycles $\upsilon \in Z^3(G, \bbC^\times)$!
We would like to thank Tian Lan for explicitly pointing this phenomenon out to us, which we saw briefly in Example \ref{ex:ToricCodeNoObstruction} above.
(See also \cite{1801.01210}.)
We now provide further details on this example below.
\end{rem}

\begin{ex}
Let us consider the special case of 
the exact sequence 
$$
\begin{tikzcd}
1 \arrow[r] 
& 
\bbZ_2 \arrow[r, "\iota_1"] 
& 
\bbZ_2\times\bbZ_2\arrow[r, "\pi_2"] 
& 
\bbZ_2 \arrow[r]
& 
1,
\end{tikzcd}
$$
where $\iota_1$ is inclusion into the first factor and $\pi_2$ is the projection onto the second factor.
Now consider $\omega\in Z^3(\bbZ_2\times\bbZ_2,\bbC^\times)$ with one non-trivial element
$\omega((1,1),(1,1),(1,1))=-1$.
There are two splittings, which give exactly the two cohomology classes of $H^3(\bbZ_2,\bbC^\times)$.

Indeed, this example is the same as Example \ref{ex:ToricCodeNoObstruction} above, as there is a braided tensor equivalence $\cZ(\Vec(\bbZ_2\times\bbZ_2, \omega))\cong \cZ(\Vec(\bbZ_4))$ \cite[Ex.~4.10]{MR2362670}!
This can also be seen from the following table of twists in $\cZ(\Vec(\bbZ_4))$ using the anyon labels from \eqref{eq:AnyonsInDoubleZ4} above
$$
\begin{array}{c|rrrr}
 \theta& 1 & m & m^2 & m^3\\
 \hline
 1 & 1 & 1& 1 & 1\\
 \alpha & 1 & i & -1 & -i \\
 \alpha^2 & 1 & -1 & 1 & -1\\
 \alpha^3 & 1 & -i & -1 & i
\end{array}
$$
together with the following table of connected \'{e}tale algebras in $\cZ(\Vec(\bbZ_4))$:
$$
\begin{array}{l|c|c}
  A &\cZ(\Vec(\bbZ_4))_A^\loc & \Aut_{\cZ(\Vec(\bbZ_4))}(A)\\
  \hline
  1 & \cZ(\Vec(\bbZ_4)) & 1 \\
  1\oplus \alpha & \cZ(\Vec(\bbZ_2)) & \bbZ_2 \\
  1\oplus m & \cZ(\Vec(\bbZ_2)) & \bbZ_2 \\
  1\oplus \alpha^2m^2 & \cZ(\mathrm{Sem}) & \bbZ_2 \\
  1\oplus \alpha \oplus \alpha^2\oplus \alpha^3 & \Vec & \bbZ_4 \\
  1\oplus m\oplus m^2\oplus m^3 & \Vec & \bbZ_4 \\
  1\oplus \alpha^2\oplus m^2\oplus \alpha^2m^2 & \Vec & \bbZ_2\times\bbZ_2
\end{array}
$$
\end{ex}

\subsection{Symmetry which is automatically categorically preserved or broken}
\label{sec:AutomaticallyPreservedOrBroken}

In the next three sections, we present some interesting cases where symmetry is automatically categorically preserved or broken.
In \S\ref{sec:Aut(A)Trivial}, we consider when $\Aut_\cC(A)$ is trivial, so that the exact sequence \eqref{eq:ShortExactSequence} is always an isomorphism.
In \S\ref{sec:FirstLevelBroken}, we consider the case when $\Aut_\cC(A)\cong \Aut_{\cC^G}(I(A))$, so that no non-trivial $G$-action may satisfy the first obstruction.
Finally, in \S\ref{sec:MultilayerPhase}, we consider a multilayer phase where the algebra lies in one level and the action happens on another, so that the exact sequence \eqref{eq:ShortExactSequence} trivially splits.

%%%%%%%%%%%%%%%%%%%%%%%%%%%%%%%%%%%%%%%%%%%%%%%%%%%%%%%%%%%%%%
\subsubsection{\'Etale algebras with trivial automorphism group}
\label{sec:Aut(A)Trivial}

Suppose $\cC$ is a non-degenerately braided fusion category and 
$A\in\cC$ is a connected \'etale algebra with trivial automorphism group $\Aut_{\cC}(A)\cong 1$. 
We assume there is a categorical action $\underline{\rho}:\underline{G}\to\uEqBrStab(A)\subset \uEqBr(\cC)$. 
This implies that
the exact sequence \eqref{eq:ShortExactSequence} necessarily collapses to
\begin{equation*}
\begin{tikzcd}
1%=\Aut_\cC(A) 
\ar[r,"\iota"]
&
\Aut_{\cC^G}(I(A))
\ar[r,"\pi"]
&
G
\ar[r,""]
&
1.
\end{tikzcd}
\end{equation*}
In other words, 
$\pi:\Aut_{\cC^G}(A)\to G$ is an isomorphism. 
In particular, there is a unique lifting
$\sigma=\pi^{-1}$
as in \S\ref{sec:LandauTheory}.

We now show that non-trivial such examples indeed exist. 
First, we note the following sufficient condition for $\Aut_{\cC}(A)$ to be trivial, which  essentially follows from the argument in \cite[\S6 Type $E_6$]{MR1936496}.

\begin{prop} 
\label{prop:SufficientTrivialAutomorphismGroup}
Suppose $\cC$ is a fusion category and $X\in \cC$ is a self-dual object with $\dim(\cC(X^{\otimes 2}\to X))=1$.
If $A:= 1\oplus X$ admits the structure of a separable algebra object, then the algebra structure is unique up to unique isomorphism.
In particular, $\Aut_{\cC}(A)$ is trivial.
\end{prop}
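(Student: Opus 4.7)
My plan is to reduce the proposition to showing that, under the hypotheses, the two scalars parametrizing the free part of the multiplication on $A=1\oplus X$ are both non-zero. First, I would decompose an algebra automorphism $\phi\colon A\to A$ along $A=1\oplus X$: unitality forces $\phi|_{1}=\id_{1}$, and, assuming $X$ is simple (the hypothesis $\dim\cC(X^{\otimes 2},X)=1$ is intended in that setting, as in the $E_6$-type context), we get $\phi|_X=\gamma\,\id_X$ for some $\gamma\in\C^\times$.

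Next I would decompose the multiplication itself. Unitality determines every component of $m$ except $m|_{X\otimes X}\colon X\otimes X\to 1\oplus X$. Self-duality yields $\cC(X^{\otimes 2},1)\cong\cC(X,X)=\C$, and combined with $\cC(X^{\otimes 2},X)=\C$, I choose bases $f_1,f_X$ and write $m|_{X\otimes X}=\alpha f_1+\beta f_X$. Projecting the relation $\phi\circ m=m\circ(\phi\otimes\phi)$ on the $X\otimes X$ piece into the two summands of $A$ gives $\alpha=\gamma^2\alpha$ and $\beta=\gamma\beta$, so $\gamma=1$ whenever both $\alpha$ and $\beta$ are non-zero.

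The core of the argument is to establish $\alpha\neq 0$ and $\beta\neq 0$ using separability. For $\alpha$: a connected separable algebra is canonically a symmetric Frobenius algebra, with form $\varepsilon\colon A\to 1$ satisfying $\varepsilon|_X=0$, so the Frobenius pairing on $X\otimes X$ is a non-zero multiple of $\alpha f_1$, and non-degeneracy forces $\alpha\neq 0$. For $\beta$: if $\beta=0$, then the $X$-component of the associativity equation on $X^{\otimes 3}$ reduces (using $\alpha\neq 0$) to $(f_1\otimes\id_X)=(\id_X\otimes f_1)\circ a_{X,X,X}$ in $\cC(X^{\otimes 3},X)$. Precomposing both sides with $\coev_X\otimes\id_X\colon X\to(X\otimes X)\otimes X$, the left side yields $\dim(X)\cdot\id_X$ while the right side yields $\id_X$ by the snake identity, so $\dim(X)=1$. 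But $X$ must be non-invertible (otherwise $X\otimes X\cong 1$ contradicts $\dim\cC(X^{\otimes 2},X)=1$), so $\dim(X)\neq 1$ in the intended spherical/unitary setting.

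The main obstacle I anticipate is justifying the snake-identity step, which relies on a pivotal structure on $\cC$; this is automatic in the unitary contexts at which the paper is ultimately aimed, but would need extra care in full generality. Once both $\alpha$ and $\beta$ are non-zero, triviality of $\Aut_\cC(A)$ is immediate, and uniqueness of the algebra structure up to isomorphism follows because rescaling $X\to X$ by $\gamma$ sends $(\alpha,\beta)\mapsto(\gamma^{-2}\alpha,\gamma^{-1}\beta)$, leaving the invariant $\alpha/\beta^2$ fixed, and the remaining associativity relation in $\cC(X^{\otimes 3},X)$ pins this invariant down uniquely, collapsing all valid multiplications into a single scaling orbit.
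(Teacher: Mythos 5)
The paper gives no proof of this proposition: it states that the claim ``essentially follows from the argument in \cite[\S6 Type $E_6$]{MR1936496}'' and moves on. Your proposal is, in substance, exactly the Kirillov--Ostrik argument being invoked, carried out in full: parametrize $m|_{X\otimes X}$ by $(\alpha,\beta)$, use connectedness + separability (Frobenius form non-degeneracy) to force $\alpha\neq 0$, use associativity on $X^{\otimes 3}\to X$ to force $\beta\neq 0$ (else $\dim X=1$), and then read off $\Aut_\cC(A)=1$ together with uniqueness of the structure from the constraint $\alpha/\beta^2=\mathrm{const}$. So you are not on a ``genuinely different route''; you are supplying the details the paper outsources.

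Two small caveats worth making explicit. First, the hypotheses as literally written do not exclude $X\cong 1$, for which $A\cong 1\oplus 1$ is separable but has $\Aut_\cC(A)\cong\bbZ/2$; one must read $X$ as simple and non-trivial, as you do and as the paper's applications ($X=[3]$ in $\cC(\SU(2)_{10})$) require --- note also that the Frobenius step uses $\cC(A\to 1)=\bbC$, i.e.\ connectedness, which holds exactly in that reading. Second, the pivotal/spherical subtlety you flag in the snake-identity step is the one genuine technical point: the scalar $\ev_X\circ\coev_X$ is a pivotally defined dimension, and ``$=1\implies$ invertible'' needs a pivotal/unitary structure or an appeal to the squared-norm $|X|^2 = d_X\tilde d_X > 1$ for non-invertible simples in a general fusion category. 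In the unitary setting of the paper's examples (and of Kirillov--Ostrik) this is automatic, so your argument stands.
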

% \begin{proof}
% Fix $t \in \cC(X\otimes X \to X)\setminus\{0\}$ and an evaluation pairing $\ev_X\in \cC(X\otimes X \to 1_\cC)\setminus\{0\}$.
% Any algebra map $m : A\otimes A \to A$ is determined by its component maps $m_{X,X}^1 = a \ev_{X}:X\otimes X \to 1_\cC$ and $m_{X,X}^X =b t: X\otimes X \to X$ where $a,b\in \bbC$.
% $$
% \begin{array}{c|cc}
% 1 & 1 & X
% \\\hline
% 1 & \id_1 & 0
% \\
% X & 0 &a \ev_X
% \end{array}
% \qquad\qquad
% \begin{array}{c|cc}
% X & 1 & X
% \\\hline 
% 1 & 0 & \id_X
% \\
% X & \id_X & bt
% \end{array}
% $$
% Since $A$ is self-dual, we must have $a\neq 0$.
% Without loss of generality, we may assume $a=1$ by composing with the algebra isomorphism $\id_{1_\cC}\oplus a^{1/2} \id_X$.

% \nn{finish}
% \end{proof}

The following is a well-known example of a non-trivial \'etale algebra $A$ which fulfills the 
hypothesis of Proposition \ref{prop:SufficientTrivialAutomorphismGroup}.
%of the form $1\oplus X$ with $X$ simple and non-invertible.
\begin{ex}
\label{ex:GHJ}
Let $\cC(\SU(2)_{10})$ with simple objects $([0],[\frac{1}{2}],[1],\dots, [5])$ and fusion rules 
$$
  [i]\otimes [j]\cong \bigoplus^{i+j}_{\substack{k=|i-j|\\i+j+k\leq 10}} [k],
$$
and consider the unique connected \'{e}tale algebra 
$A_{\mathrm{GHJ}} := [0]\oplus [3]$ \cite[\S6 Type $E_6$]{MR1936496} (see also \cite{MR999799}).
By Proposition \ref{prop:SufficientTrivialAutomorphismGroup}, $\Aut(A_{\mathrm{GHJ}})$
is trivial. %\cong 1$.
\end{ex}

Now $\cE_6:=\cC(\SU(2)_{10})_{A_{\mathrm{GHJ}}}$ is an $E_6$-category, i.e.\
the fusion matrix of the (necessarily simple) free $A$-module $[\frac12]\otimes A$ is the adjancency matrix of the Dynkin diagram $E_6$.
The condensation $\cC_{A_{\mathrm{GHJ}}}^\loc %= A_3 
$ is braided equivalent to Ising category $\cC(\Spin(5)_1)$ 
with $T=\diag(1,\exp(\frac{10\pi i}{16}),-1)$.

%\begin{prop}

Recall from \cite[Cor.~4.9 and Ex.~5.1]{MR1815993} (see also \cite[Corollary 3.30]{MR3039775}) that we have braided equivalences
$$
\cZ(\cE_6)\cong \cC(\SU(2)_{10}) \boxtimes \overline{\cC_{A_{\mathrm{GHJ}}}^{\loc}}
\cong \cC(\SU(2)_{10}) \boxtimes \overline{\cC(\Spin(5)_{1})}\cong 
\cC(\SU(2)_{10}) \boxtimes \cC(\Spin(11)_{1})\,.
%\qquad\leadsto\qquad
%\cZ(E_6)\cong A_{11} \boxtimes \overline{A_3}.
$$
We denote by $\frac{1}{2}\cE_6$ the adjoint subcategory of 
$\cE_6$, which has fusion rules of the even part of $E_6$ \cite{MR1832764}:
$$
  \alpha^2\cong  1
  \qquad\qquad
  \sigma \otimes \alpha\cong \alpha\otimes \sigma\cong \sigma
  \qquad\qquad
  \sigma\otimes \sigma \cong 1\oplus \alpha \oplus 2\cdot \sigma
, %\rho^2\cong 1\oplus \alpha\oplus 2\rho.
$$
and note that $\cE_6$ is 
a $\bbZ/2$-extension of $\frac{1}{2}\cE_6$.

Let $\cC$ be the modular tensor category $\cZ(\frac{1}{2}\cE_6)$ 
which has 10 simple objects and the modular data is given in \cite{MR1832764,MR2468378}.
By \cite[Thm~3.3]{MR2587410} it follows that $\cC=\cZ(\frac{1}{2}\cE_6)$ has a $\bbZ_2$-crossed braided extension $\cF=\cZ_{\cE_6}(\frac{1}{2}\cE_6)$. 
Let $\cD=\cC(\SU(2)_{10})\boxtimes \cC(\Spin(11)_1)$.
As noted in \cite{MR3462031} we can also realize $\cF$ 
as $\cD_{1\oplus \beta}$, where $\beta$ is the unique $\bbZ_2$-boson in $\cD$.
In other words,
$\cC$ is the $\bbZ_2$ condensation of $\cD=\cC(\SU(2)_{10})\boxtimes \cC(\Spin(11)_1)$.

%We claim that $\cC$ has a $\bbZ_2$-crossed braided extension $\cF$ and
\begin{prop} The center 
$\cC=\cZ(\frac{1}{2}\cE_6)$ of the even part of $E_6$  has a unique \'etale algebra 
$A\cong 1\oplus X$ for a non-invertible object $X$ of dimension $2+\sqrt{3}$, such that the $\cF$ with associated $\bbZ_2$-action $\underline{\rho}: \underline{G}
\to \uEqBr(\cC)$ lands in $\uEqBrStab(A)$.
\end{prop}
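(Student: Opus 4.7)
The plan is to construct $A$ by inducing along the identification $\cC=\cD^{\loc}_{1\oplus\beta}$ established just above the proposition, and then verify uniqueness by an explicit enumeration of the simples of $\cZ(\tfrac12\cE_6)$. Using the paper's identifications $\cD\cong\cZ(\cE_6)\cong\cC(\SU(2)_{10})\boxtimes\cC(\Spin(11)_1)$ and $\cF\cong \cD_{1\oplus\beta}$, together with $\theta_{[5]}=e^{2\pi i\cdot 5/2}=-1$ and $\theta_\psi=-1$ for the Ising fermion $\psi$, the unique $\bbZ/2$-boson in $\cD$ is $\beta=[5]\boxtimes\psi$. Since $\cC=\cF_e=\cD^{\loc}_{1\oplus\beta}$, the free-module functor $F:\cD\to\cD_{1\oplus\beta}$, restricted to objects centralizing $\beta$ (trivial double braiding), takes values in $\cC$.

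For existence, set $A := F(A_{\mathrm{GHJ}})$ where $A_{\mathrm{GHJ}}=[0]\oplus[3]\in\cC(\SU(2)_{10})\subset\cD$ is the algebra of Example \ref{ex:GHJ}. The monodromy computation $\theta_{[3]\otimes\beta}/(\theta_{[3]}\theta_\beta)=\theta_{[2]\boxtimes\psi}=(-1)(-1)=1$ shows $[3]$ centralizes $\beta$, so $F(A_{\mathrm{GHJ}})\in\cC$. Because $[3]\otimes\beta=[2]\boxtimes\psi\not\cong[3]$ in $\cD$, the orbit is free and the free module $F([3])$ is simple; with $F([0])\cong 1_\cC$ one obtains $A\cong 1_\cC\oplus X$, where $X:=F([3])$ has $\dim X=\dim_\cD[3]=[7]_q=\cot(\pi/12)=2+\sqrt 3$ and is therefore non-invertible. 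Connectedness follows from $\Hom_\cC(1_\cC, A)\cong \Hom_\cD(1_\cD,A_{\mathrm{GHJ}}\otimes(1\oplus\beta))=\bbC$, and the remaining \'etale axioms descend from $A_{\mathrm{GHJ}}$ via the braided monoidality of $F$ into $\cC$.

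For uniqueness of $A$, I enumerate the ten simples of $\cC$ via the orbit/fixed-point description of $\cD^{\loc}_{1\oplus\beta}$: checking locality $\theta_{x\otimes\beta}=\theta_x$ for $x\in\Irr(\cD)$ yields $17$ local simples grouped into $8$ free $\bbZ/2$-orbits plus the single fixed point $[5/2]\boxtimes\sigma$, totaling $10$ simples of $\cC$ with dimensions $1,1,1+\sqrt 3$ (five times), $2+\sqrt 3, 2+\sqrt 3, 3+\sqrt 3$. The two simples of dimension $2+\sqrt 3$ are $F([2])$ and $F([3])$, with inherited twists $\theta_{F([2])}=\theta_{[2]}=-1$ and $\theta_{F([3])}=\theta_{[3]}=1$; hence $X=F([3])$ is the unique \emph{bosonic} simple of dimension $2+\sqrt 3$. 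The only other decomposition of an object of this dimension is $1+(1+\sqrt 3)$, which is ruled out since the only non-unit dimension-$1$ simple $F([5])$ has twist $-1$ and so cannot occur as a summand of an \'etale algebra. Finally, monoidality of $F$ gives $X\otimes X\cong F([0]\oplus[1]\oplus[2]\oplus[3]\oplus[4])$ with five pairwise-distinct simple summands in $\cC$, so $X$ is self-dual and $\dim\Hom_\cC(X^{\otimes 2},X)=1$. Proposition \ref{prop:SufficientTrivialAutomorphismGroup} then yields uniqueness of the separable algebra structure and $\Aut_\cC(A)=1$.

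The last assertion is immediate: the nontrivial element $g\in\uEqBr(\cC)$ coming from the $\bbZ/2$-crossed extension $\cC\subset\cF$ is a braided autoequivalence of $\cC$, so $g(X)$ is a simple of $\cC$ with $\dim g(X)=2+\sqrt 3$ and $\theta_{g(X)}=1$; the uniqueness just proved forces $g(X)\cong X$, and uniqueness of the algebra structure on $1_\cC\oplus X$ then forces $g(A)\cong A$ as algebras. Hence $\underline{\rho}$ lands in $\uEqBrStab(A)$. I expect the main technical obstacle to be the orbit/fixed-point bookkeeping and twist calculations needed to enumerate the ten simples of $\cC$; once those are in hand, each of the four claims (existence, non-invertibility, uniqueness of algebra, $\bbZ/2$-stability) is a short consequence.
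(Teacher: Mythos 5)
Your proof is correct and takes a genuinely more explicit route than the paper's. The paper constructs $A$ by taking the subalgebra of the Lagrangian $L'=I(1)\in\cD$ corresponding to the maximal pointed subcategory $\cE_6^\times\subset\cE_6$, obtains $\dim A=3+\sqrt3$ by a ``simple dimension argument,'' reads off $A\cong 1\oplus X$ from the modular data, and for the stability cites the explicit modular invariant from \cite{MR3462031} to see that $X$ splits into two inequivalent simples in $\cD\cong\cF^{\bbZ_2}$. You instead realize $A=F(A_{\mathrm{GHJ}})$ via the free-module functor, verify by a monodromy computation that $A_{\mathrm{GHJ}}\boxtimes 1$ centralizes $\beta=[5]\boxtimes\psi$ (so $F$ lands in $\cC$), and then enumerate all ten simples of $\cC$ by orbit/fixed-point bookkeeping for the local $(1\oplus\beta)$-modules in $\cD$, pinning down $X=F([3]\boxtimes 1)$ as the unique bosonic simple of dimension $2+\sqrt3$. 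This buys you a self-contained verification of the uniqueness assertion of the proposition, which the paper's write-up leaves implicit, and your stability argument (braided autoequivalences preserve dimensions and send connected \'etale algebras to connected \'etale algebras, and the bosonic simple of the required dimension is unique, then apply Proposition \ref{prop:SufficientTrivialAutomorphismGroup}) avoids consulting an external modular-invariant table; both proofs ultimately lean on Proposition \ref{prop:SufficientTrivialAutomorphismGroup} to get $\Aut_\cC(A)=1$ and hence $g(A)\cong A$ as algebras. One minor refinement: rather than asserting $\theta_{g(X)}=1$ via the fact that braided autoequivalences preserve twists (true here since $\cC$ is a UMTC with its canonical spherical structure, but not entirely automatic in general), it is slightly cleaner to note that $g(A)$ is again connected \'etale, so all of its simple summands must be bosons, forcing $g(X)\cong X$.
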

\begin{proof}
The adjoint functor $I:\cE_6\to \cZ(\cE_6)\cong\cD$ of the forgetful functor 
 gives a Lagrangian algebra $L'=I(1)$ in $\cD$.  Subalgebras of $L'$ correspond to full subcategories of $\frac{1}{2}\cE_6$ \cite[Theorem 4.10]{MR3039775}. 
The maximal pointed subcategory $\cE_6^\times\subset \cE_6$ correspond to an \'etale algebra
$A'\in\cD$.
Note that the \'etale algebra $B=1\oplus\beta$ corresponds to the adjoint subcategory $\frac{1}{2}\cE_6\subset \cE_6$ and $A'$ gives an \'etale 
algebra in $\cC\cong \cD_B^\loc$ which we denote by $A$. 
A simple dimension argument gives $\dim(A)=3+\sqrt{3}$
and it follows from the modular data of $\cC$ that $A\cong 1\oplus X$ as claimed.

To see that the action stabilizes, we note that $g(X)\cong X$, since the object $X$ splits into two inequivalent objects in $\cD\cong\cF^{\bbZ_2}$, which can be read of the modular invariant which has been computed in
\cite[Example 4.13]{MR3462031}.
Thus $g$ fixes $A$ as an object, but $A$ has a unique algebra structure by Proposition \ref{prop:SufficientTrivialAutomorphismGroup}.
\end{proof}

This implies $\Aut_\cC(A)$ is trivial and that there is a unique equivariant 
algebra $(A,\lambda)\in \cF^{\bbZ_2}$.
Again it can be checked that the only possibility 
as an object (and hence as an algebra using Proposition \ref{prop:SufficientTrivialAutomorphismGroup})
is that $(A,\lambda)$ is isomorphic to $A_{\mathrm{GHJ}}\boxtimes 1$ in $\cD$.
It further follows that 
$$
  (\cF_A^{\bbZ_2-\loc})^{\bbZ_2}\cong
  \cD_{(A,\lambda)}^\loc\cong 
  \cC(\Spin(5)_1)\boxtimes\cC(\Spin(11)_1)
$$
which is a center of an Ising category. 
The condensed theory $\cC_A^\loc\cong\cC(\Spin(16)_1)$
is the toric code $\cZ(\bbZ_2)$. We summarize with the following diagram.
\begin{equation*}
%\label{eq:HalfE6}
\begin{tikzcd}
  & \cZ(\bbZ_2) \arrow[rd, "\text{gauge lifted }\bbZ_2\text{ action}"] &  
  \\
  \cZ(\cE) \arrow[rd, "\text{gauge }\bbZ_2\text{ action}"'] \arrow[ru,squiggly, "\text{condense }A"] &  & 
\cC(\Spin(5)_1)\boxtimes\cC(\Spin(11)_1)
\\
  & \cC(\SU(2)_{10})\boxtimes\cC(\Spin(11)_1) \arrow[ru,squiggly,"\text{condense }{(A,\lambda)}"']
  &
\end{tikzcd}
\end{equation*}

\subsubsection{First level  broken symmetry}
\label{sec:FirstLevelBroken}

Suppose $\cC$ is a non-degenerately braided fusion category and $A\in \cC$ is a connected \'etale algebra. 
Suppose we have a non-trivial finite group $G$ and a categorical action $\underline{G}\to \uEqBr(\cC)$.
If $g(A)\not\cong A$ as algebras for all $g\in G$ the symmetry must be completely broken. 

%\textbf{}\textbf{}We note this can be subtle. Obviously if $g(A)\not\cong A$ at the level of objects, symmetry is completely broken just by inspection. However, it can happen that $g(A)\cong A$ as objects, but it is unclear whether or not $g(A)\cong A$ \textit{as algebra objects}. However, our exact sequence allows us to determine in some cases when symmetry is compleley broken without examing the algebra maps in detail.

In particular, suppose %further 
that we have a group isomorphism $\Aut_\cC(A) \cong \Aut_{\cC^G}(I(A))$.
Then $g(A)\ncong A$ as an algebra for any $g\in G$; otherwise, the existence of our short exact sequence leads to a contradiction.
Hence the symmetry is completely broken.

\begin{ex}
\label{ex:Metaplectic}
For an explicit example, we take $\cC :=\cZ(\Vec(\bbZ_3))\cong \Vec(\bbZ_3, q) \boxtimes \Vec(\bbZ_3, \overline{q})$ where $q(n)=\exp(2\pi i n^2/3)$ is a quadratic form on $\bbZ_3$, and $A:= \bigoplus_{g\in \bbZ_3} g\boxtimes g^{-1}$.
Let $\underline{\bbZ_2}\to \uEqBr(\Vec(\bbZ_3, q))$ be the particle-hole symmetry \cite{MR3555361}, which gives a categorical $\bbZ_2$-action on $\cC$. 
Indeed, there are two equivalence classes of $\bbZ_2$-crossed braided extension of $\cC$ which are of the form 
$\cM\boxtimes \Vec(\bbZ_3,\overline{q})$ for $\cM$ a $\bbZ_3$-metaplectic category \cite{MR3541678}.
Let us denote by $\alpha$ the non-trivial element of $\uEqBr(\cC)$ with $\alpha(g\boxtimes k)\cong g^{-1}\boxtimes k$.
This action maps $\alpha(g\boxtimes g^{-1})\cong g^{-1}\boxtimes g$, and thus $\alpha$ does not even fix $A$ as an object. 

We claim that $\Aut_\cC(A) \cong \Aut_{\cC^G}(I(A))$.
First, note that $\Aut_\cC(A) =\bbZ_3$.
Next, it is straightforward to show that 
$$
I(A)\cong (1\boxtimes 1)\oplus (h\boxtimes 1)\oplus (X\boxtimes g)\oplus (X\boxtimes g^{-1})
$$ 
where $h\in (\Vec(\bbZ_3,q)^{\bbZ_2}$ is a boson, $X\in (\Vec(\bbZ_3,q)^{\bbZ_2}$ has dimension $2$, and $g$ is a generator of $\Vec(\bbZ_3,\overline q)$. 
Hence $\dim_{\cC^G}(I(A))=6$, and $|\Aut_{\cC^G}(I(A))|\leq 6$ \cite{DaBi2018}.
Assume for contradiction that $|\Aut_{\cC^G}(I(A))|=6$ so that 
$I(A)\cong\cO(H)$ for some group $H$ of order 6, see \cite{DaBi2018}.
But
$$
\cO(H)\cong\bigoplus_{\pi\in\Irr(\Rep(H))}\dim(\pi) \cdot \pi
\ncong
I(A),
$$
a contradiction.
Hence $|\Aut_{\cC^G}(I(A))|<6$.
Since we always have an injection $\Aut_\cC(A)=\bbZ_3\hookrightarrow \Aut_{\cC^G}(I(A))$, this injection must be an isomorphism.
\end{ex}

\begin{ex}
\label{ex:S3}
We now consider the other equivalence class of $\bbZ_2$-crossed braided extension of $\cC$, which is in stark contrast to Example \ref{ex:Metaplectic}. 
This other extension comes from the universal example from \S\ref{subsection:UniversalExample} corresponding to the exact sequence 
$$
\begin{tikzcd}
1 \arrow[r] 
& 
\bbZ_3 \arrow[r, "i"] 
& 
\bbZ_3\rtimes\bbZ_2\cong S_3 \arrow[r, "p"] 
& 
\bbZ_2 \arrow[r]
& 
1
\end{tikzcd}
$$
which permits three different splittings.
\end{ex}
As in \cite{1803.04949}, Examples \ref{ex:Metaplectic} and \ref{ex:S3} can be generalized to any finite abelian group of odd order.

\begin{prob}
Find an explicit example of a non-degenerate braided fusion category $\cC$, a connected \'etale algebra $A$, and a categorical action $\underline{\rho}:\underline{G}\to \uEqBr(\cC)$ with $\Aut_\cC(A)\cong \Aut_{\cC^G}(I(A))$ 
such that $g(A)\cong A$ \emph{as an object alone}, but not necessarily as an algebra.
\end{prob}

%%%%%%%%%%%%%%%%%%%%%%%%%%%%%%%%%%%%%%%%%%%%%%%%%%%%%%%%%%%%%%
\subsubsection{Trivially unbroken symmetry}
\label{sec:MultilayerPhase}

\begin{ex}
Suppose $\cC_1,\cC_2$ are non-degenerately braided fusion categories, and define $\cC:=\cC_1\boxtimes \cC_2$.
Suppose we have a categorical action $\uG\to \uEqBr(\cC_2)$, which gives us a categorical action $\uG \to \uEqBr(\cC)$.
Suppose now $A_1 \in \cC_1$ is a connected \'{e}tale algebra, and consider $A=A_1\boxtimes 1_{\cC_2}\in \cC$.
Then $\Aut_{\cC^G}(I(A))\cong \Aut_\cC(A)\times G \cong \Aut_{\cC_1}(A_1)\times G$, and the exact sequence \eqref{eq:ShortExactSequence} clearly splits.
\end{ex}

%%%%%%%%%%%%%%%%%%%%%%%%%%%%%%%%%%%%%%%%%%%%%%%%%%%%%%%%%%%%%%%%%%%%
%%%%%%%%%%%%%%%%%%%%%%%%%%%%%%%%%%%%%%%%%%%%%%%%%%%%%%%%%%%%%%%%%%%%
%%%%%%%%%%%%%%%%%%%%%%%%%%%%%%%%%%%%%%%%%%%%%%%%%%%%%%%%%%%%%%%%%%%%
\section{Application to algebraic quantum field theory}
\label{sec:AQFT}

A \emph{conformal net} $\cB=\{\cB(I)\}_{I\in\cI}$ is a family of von Neumann algebras 
on a common Hilbert space $H$ indexed by the set $\cI$ of proper open intervals on the unit circle $S^1$ together with a cyclic and separating unit vector $\Omega\in H$ and a projective unitary representation of $\mathop{\mathrm{Diff}}_+(S^1)$, the group of orientation preserving diffeomorphisms of $S^1$, fulfilling certain axioms, see e.g.\  \cite{MR2007172} for details.

If $\cB$ is a so-called \emph{rational} conformal net, then $\cC:=\Rep^I(\cB)$ is a unitary modular tensor category \cite{MR1838752}.
A conformal net $\cB$ has a group of \emph{global gauge transformations} $\Aut(\cB)$.
There is a one-to-one correspondence \cite{MR1332979} between local irreducible extensions $\cA\supset \cB$ and connected \'etale ${\rm C^*}$ Frobenius algebras 
$A\in\Rep(\cB)$, and $\Rep(\cA)$ is braided tensor equivalent to $\cC^{\mathrm{loc}}_{A}$ \cite{MR3424476}.
Thus local extensions correspond to condensation of \'etale algebras.
Let us denote the global gauge transformations of $\cA$ fixing 
$\cB$ pointwise by
\begin{equation*}
  \Aut(\cA|\cB)=\set{\alpha\in \Aut(\cA)}{\alpha(b)=b \text{ for all }b\in\cB(I)\text{ and }I\in\cI}\,,
\end{equation*}
which is a finite group since the index $[\cA:\cB]=\dim_{\Rep(\cB)}(A)$ is finite.
By \cite{MR2007172}, $|\Aut(\cA|\cB)|\leq [\cA:\cB]$.
\begin{lem}
  \label{lem:AutomorphismsOfCNets}
  Under the above correspondence between $A\in\Rep(\cB)$ and $\cA\supset \cB$, we have a correspondence
$\Aut(\cA|\cB)\longleftrightarrow\Aut_{\Rep(\cB)}(A)$.
\end{lem}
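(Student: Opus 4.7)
The plan is to construct mutually inverse group homomorphisms $\Aut(\cA|\cB)\rightleftarrows \Aut_{\Rep(\cB)}(A)$ by passing through the Q-system realization of the extension. Fix an interval $I_0\in\cI$. By the Longo correspondence cited just before the lemma, the subfactor $\cB(I_0)\subset \cA(I_0)$ is encoded by a Q-system $(\theta,w,x)$ in $\cB(I_0)$, where $\theta$ is the dual canonical endomorphism, $w\in\Hom(\id,\theta)$, $x\in\Hom(\theta,\theta^2)$, and the object $A\in\Rep(\cB)$ is $\theta$ with unit $i = w$ and multiplication $m = x^*$. Furthermore $\cA(I_0)$ is generated over $\cB(I_0)$ by a distinguished isometry $v$ satisfying $vb = \theta(b)v$ for $b\in\cB(I_0)$.

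For the forward map, given $\alpha\in\Aut(\cA|\cB)$, the element $\alpha(v)\in\cA(I_0)$ satisfies $\alpha(v)\cdot b = \theta(b)\cdot \alpha(v)$ since $\alpha$ fixes $\cB$ pointwise, so I would deduce that there is a unique $u\in\cB(I_0)$ with $\alpha(v)=uv$, and the intertwining property forces $u\in(\theta,\theta)=\End_{\Rep(\cB)}(A)$. Unitarity of $u$ follows from $\alpha$ being a $*$-automorphism and $v^*v=\id$. Then I would check that $\alpha$ being unital and multiplicative translate respectively into $uw=w$ and $u\cdot x = x\cdot(u\otimes u)$ (after expanding the product $vv = x\cdot v$), i.e., that $u$ intertwines the unit and multiplication of $A$. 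Thus the assignment $\alpha\mapsto u$ takes values in $\Aut_{\Rep(\cB)}(A)$, and is manifestly a group homomorphism.

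For the reverse map, given $\lambda\in\Aut_{\Rep(\cB)}(A)$ realized by a unitary $u\in(\theta,\theta)$ intertwining $w$ and $x$, I would define $\alpha_u$ by $\alpha_u|_{\cB(I_0)}=\id$ and $\alpha_u(v):= uv$, extended to all of $\cA(I_0)$ via the Pimsner--Popa expansion $bvc \mapsto b\cdot uv\cdot c$. The Q-system automorphism identities on $u$ are exactly what is needed for $\alpha_u$ to preserve products, adjoints, and the unit, and by net covariance $\alpha_u$ extends uniquely to a global automorphism of $\cA$ fixing $\cB$ pointwise. A direct comparison then shows that the two maps are mutually inverse.

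The main obstacle I anticipate is the bookkeeping: verifying that the forward construction genuinely lands in the algebra automorphisms (rather than merely endomorphisms) of $A$, confirming that the reverse construction extends consistently from $\cA(I_0)$ to the whole net, and showing independence of the chosen interval $I_0$. None of this should be conceptually difficult, since by construction $\cA$ is generated over $\cB$ by the Q-system data, so an automorphism of $\cA$ fixing $\cB$ pointwise is already determined by its action on the single generator $v$; the remaining compatibility with locality and M\"obius covariance follows from the axiomatic framework of rational conformal nets.
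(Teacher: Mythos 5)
Your proposal takes essentially the same route as the paper's proof, modulo a translation of language: where you work with the Q-system triple $(\theta,w,x)$ and the isometry $v\in\cA(I_0)$ satisfying $vb=\theta(b)v$, the paper works directly with the tensor-categorical data $A\cong\bar\iota\otimes\iota$ and sets $v:=\coev_\iota\otimes\id_\iota$ so that $\cA(I)=\iota(\cB(I))v$. These are the same object by the Longo correspondence, and the two constructions of mutually inverse maps are the same up to inversion and zig-zag bookkeeping, so nothing conceptually new is gained either way.

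The one place you wave your hands where the paper is careful is the passage from an automorphism of the single-interval inclusion $\cB(I_0)\subset\cA(I_0)$ to a global automorphism of the net. You dismiss this as a covariance routine, but it is in fact a nontrivial point: the paper introduces the group $\Aut(\cA(I)|\cB(I),\Omega)$, which additionally requires the automorphism to preserve the vacuum expectation $\langle\Omega|\cdot\,\Omega\rangle$, and then invokes a cited result ([MR3595480] in the bibliography) to identify $\Aut(\cA(I)|\cB(I),\Omega)=\Aut(\cA|\cB)$. Your construction of $\alpha_u$ from a unitary $u\in(\theta,\theta)$ does preserve the canonical conditional expectation (because $u$ fixes the unit $w$), and hence the vacuum state, so the maps you define do land where they should; but the reduction to a single interval should be explicitly justified by this kind of result rather than treated as folklore. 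Aside from that, and a minor sign/adjoint convention issue in the intertwining condition for $x$ (your $u\cdot x = x\cdot(u\otimes u)$ should really be $x u = (u\otimes u)x$ or its adjoint form, depending on whether you regard $m=x^*$ or $x$ itself as the structure map), the proposal is sound.
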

\begin{proof}
Let $H$ be a Hilbert space, let $N\subset M\subset B(H)$ be von Neumann algebras, and let $\Omega\in H$ be a cyclic and separating vector.
We denote
\begin{equation*}
  \Aut(M|N,\Omega)=\set{\alpha\in\Aut(M)}{\alpha(n)=n 
  \text{ and } \langle\Omega|\alpha(m)\Omega\rangle=\langle\Omega|m\Omega\rangle
  \text{ for all }n\in N, m\in M}\,.
\end{equation*}
Fix a proper interval $I\in\cI$. 
By \cite{MR3595480} %for finite index inclusions 
we have $\Aut(\cA(I)|\cB(I),\Omega)=\Aut(\cA|\cB)$.
  We may assume that $A\in\Rep^I(\cA)\subset \End(\cA(I))$ and that $A=\bar\iota\otimes\iota$
  where $\iota:\cB(I)\to\cA(I)$ is the embedding.
  Let $v:=\coev_{\iota}\otimes \id_\iota$ so that $\cA(I)=\iota(\cB(I))v$ by \cite{MR1332979,MR3308880}.
  We get a map $\alpha:\Aut_{\Rep(\cB)}(A)\to \Aut(\cA(I)|\cB(I),\Omega)$ given by $\alpha(g)(\iota(b)v)=\iota(b g^{-1})v$
  and a map $\beta: \Aut(\cA(I)|\cB(I),\Omega)\to\Aut_{\Rep(\cB)}(A)$ given by $\beta(\alpha)=
  (\ev_{\iota}\otimes A)\circ(\id_{\bar\iota}\otimes\alpha^{-1}(v))$.
  One can verify that $\alpha$ and $\beta$ are homomorphisms which are mutually inverse.
  Indeed, by the zig-zag relation we have 
  $(\ev_{\iota}\otimes A)\circ(\id_{\bar\iota}\otimes\alpha(g)(v))=g$, so 
  $\beta\circ\alpha=\id_{\Aut_{\Rep(\cB)}(A)}$.
  Using the other zig-zag relation gives  $\alpha\circ\beta=\id_{\Aut(\cA(I)|\cB(I),\Omega)}$.
\end{proof}
For every $\alpha\in\Aut(\cA)$ with $\alpha(\cB(I))=\cB(I)$, we get an automorphism $\res_{\cB}(\alpha)\in\Aut(\cB)$.
Given a finite global symmetry group $G\subset \Aut(\cB)$, M\"uger \cite{MR2183964} showed that
there is:
\begin{itemize}
  \item 
  an irreducible rational subnet $\cB^G\subset \cB$ where
  $\cB^G(I):=\{x\in\cB(I):\alpha(x)=x \text{ for all } \alpha\in G\}$ \cite{MR1806798} which is rational by \cite{MR1838752},
  \item 
  an action $\underline\rho:\underline{G}\to \uEqBr(\cC)$, $\alpha\mapsto {}^\alpha\pi$ given 
  by ${}^\alpha\pi_I=\alpha\circ\pi_I\circ\alpha^{-1}$ for any representation $\pi=\{\pi_I\}$, and
  \item 
  a $G$-crossed braided extension $\cF=G\mathrm{-}\Rep^I(\cB)$ of $\cC$ compatible with $\underline{\rho}$.
\end{itemize}
Furthermore, $\Rep(\cB^G)$ is braided tensor equivalent to $\cF^G$. 
Thus the ``orbifold subnet'' $\cB^G\subset \cB$ corresponds to gauging $\cC$ by $G$.

The language of tensor categories gives a dictionary between anyons and conformal nets.
We can ask what the notion of an unbroken categorical symmetry means in the setting of conformal nets.
A similar problem has been studied in \cite{MR1228529}.
\begin{defn}
An extension of the global symmetry $G\subset \Aut(\cB)$ of $\cB$ to a global symmetry of $\cA\supset \cB$ corresponds to an embedding $s:G\hookrightarrow \Aut(\cA |\cB^G)\subset\Aut(\cA)$ such that 
$s(g)(\cB(I))=\cB(I)$ for all $g\in G$ and $\res_{\cB}\circ s =\id_G$.
\end{defn}
\begin{prop}
  \label{prop:CNets}
  Let $\cB$ be a rational conformal net,
  $\cC:= \Rep(\cB)$ its UMTC of representations, 
  $G\subset\Aut(\cB)$ a finite subgroup,
  and $A\in\cC$ a connected \'etale $\rm C^*$ Frobenius algebra with associated local extension $\cA\supset\cB$.
  An extension of the global symmetry $G$ of $\cB$ to $\cA$ exists only if 
  the associated action $\underline G \to \uEqBr(\cC)$ lies in $\uEqBrStab(A)$.
  In this case, extensions are in one-to-one correspondence with splittings $\sigma\colon G\to \Aut_{\cC^G}(I(A))$ of the exact sequence \eqref{eq:ShortExactSequence}.
\end{prop}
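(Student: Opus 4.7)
The plan is to apply Lemma \ref{lem:AutomorphismsOfCNets} to the composite local extension $\cA\supset\cB^G$ and transport the question through the M\"uger equivalence $\Rep(\cB^G)\cong\cF^G$. The key preliminary fact is that the connected \'etale $\mathrm{C}^\ast$ Frobenius algebra $\tilde A\in\cF^G$ associated with $\cA\supset\cB^G$ is isomorphic, \emph{as an algebra in} $\cF^G$, to $I(A)=\bigoplus_{g\in G}g(A)$ of Definition \ref{defn:AlgebraI(A)}. This is forced by the tower $\cA\supset\cB\supset\cB^G$: the intermediate extension $\cB\supset\cB^G$ corresponds to $\cO(G)=I(1_\cC)\in\Rep(G)\subset\cF^G$, and the one-step condensation functor $\cF^G\to\cC$ factors as $\cF^G\to\cF\to\cF_e=\cC$ whose left adjoint, restricted to algebras, is precisely $A\mapsto I(A)$. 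Dimensions match, $|G|\cdot\dim(A)=[\cB:\cB^G]\cdot[\cA:\cB]=[\cA:\cB^G]$, and the subalgebra $\cO(G)\subset I(A)$ realizes $\cB\supset\cB^G$. Because $I(A)$ lies in the full tensor subcategory $\cC^G\subset\cF^G$, one has $\Aut_{\cF^G}(I(A))=\Aut_{\cC^G}(I(A))$, and Lemma \ref{lem:AutomorphismsOfCNets} yields
\begin{equation*}
  \Aut(\cA|\cB^G)\cong\Aut_{\cC^G}(I(A)).
\end{equation*}

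Next I would check that under this isomorphism, the restriction map $\res_{\cB}\colon\Aut(\cA|\cB^G)\to G$ coincides with the homomorphism $\pi\colon\Aut_{\cC^G}(I(A))\to G$ of \eqref{eq:ShortExactSequence}. An automorphism of $\cA$ fixing $\cB^G$ restricts to a Galois element of $\Aut(\cB|\cB^G)=G$, and on the categorical side this restriction is exactly the action on the subalgebra $\cO(G)\subset I(A)$, which by Lemma \ref{lem:UniqueNonzeroComponent} is labelled by $\pi$. The kernel identification $\Aut(\cA|\cB)\cong\Aut_\cC(A)\cong\iota(\Aut_\cC(A))$ comes from Lemma \ref{lem:AutomorphismsOfCNets} applied directly to $\cA\supset\cB$. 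With these compatibilities in place, the ``only if'' direction is immediate: an extension $s\colon G\hookrightarrow\Aut(\cA|\cB^G)$ transports to a section $\tilde s$ of $\pi$, and by Lemma \ref{lem:SurjectiveGroupHom} each $g\in G$ then admits an algebra isomorphism $g(A)\to A$, so the action factors through $\uEqBrStab(A)$.

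When the action does lie in $\uEqBrStab(A)$, Lemma \ref{lem:SurjectiveGroupHom} makes $\pi$ surjective and \eqref{eq:ShortExactSequence} exact; injective homomorphisms $s\colon G\hookrightarrow\Aut(\cA|\cB^G)$ with $\res_\cB\circ s=\id_G$ correspond bijectively under the dictionary to splittings $\sigma\colon G\to\Aut_{\cC^G}(I(A))$ of $\pi$. The main obstacle I anticipate is rigorously establishing the algebra-level identification $\tilde A\cong I(A)$ in $\cF^G$; a clean route is through the functorial correspondence \cite{MR1332979,MR3424476} between local extensions and connected \'etale $\mathrm{C}^\ast$ Frobenius algebras together with the observation that condensing $\cO(G)\subset I(A)$ in $\cF^G$ recovers $A\in\cC$ via de-equivariantization, mirroring the passage $\cB^G\to\cB\to\cA$ at the net level.
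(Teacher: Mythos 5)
Your plan is essentially the paper's plan: identify the algebra $\tilde A\in\Rep(\cB^G)\cong\cF^G$ corresponding to $\cB^G\subset\cA$ with $I(A)=\bigoplus_{g}g(A)\in\cC^G\subset\cF^G$, relate the restriction map $\res_\cB\colon\Aut(\cA|\cB^G)\to\Aut(\cB|\cB^G)=G$ to $\pi\colon\Aut_{\cC^G}(I(A))\to G$, and then read off the conclusion from the exact sequence \eqref{eq:ShortExactSequence}. The paper also uses Lemma \ref{lem:AutomorphismsOfCNets} for the kernel ($\Aut(\cA|\cB)\cong\Aut_\cC(A)$) and produces a commuting ladder
$$
\begin{tikzcd}
1 \arrow[r] & \Aut(\cA|\cB) \arrow[r, hook] & \Aut(\cA|\cB^G) \arrow[r, "\res_{\cB}"] & \Aut(\cB|\cB^G) \arrow[r]  & 1 \\
1 \arrow[r]  & \Aut_{\cC}(A) \arrow[r,"\iota"] \arrow[u,"\alpha"]& \Aut_{\cC^G}(I(A)) \arrow[r,"\pi"]\arrow[u,"\tilde\alpha"] & G \arrow[r] \arrow[u,equal] & 1\,,
\end{tikzcd}
$$
from which the bijection between extensions and splittings follows.

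The genuine gap is exactly the one you flag: the algebra-level identification $\tilde A\cong I(A)$. The paper does \emph{not} prove this by abstract nonsense; instead, it constructs the vertical map $\tilde\alpha\colon\Aut_{\cC^G}(I(A))\to\Aut(\cA|\cB^G)$ directly at the morphism level, writing $\tilde A=\bar\iota_\cB\otimes\bar\iota_\cA\otimes\iota_\cA\otimes\iota_\cB$ and noting that the subspaces $\bar\iota_\cB\otimes\alpha_g\otimes\bar\iota_\cA\otimes\iota_\cA\otimes\alpha_g^{-1}\otimes\iota_\cB$ (i.e.\ the $g$-components of $\tilde A$) admit, for each $f\in\Aut_{\cC^G}(I(A))$, a unique $f_{e,g}$-induced map $\tilde f$. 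The depth-two argument that $\Hom(\alpha_g\otimes\iota_{\cB^G},\alpha_g\otimes\iota_{\cB^G})\cong\bbC$, together with the normalization demanding $(\bar\iota_\cB\otimes\ev_{\iota_\cA}\otimes\iota_\cB)\circ\tilde f$ be an algebra automorphism of the $\cO(G)$-algebra, is what pins down $\tilde\alpha$ and verifies the two squares. Your proposed shortcut instead hinges on a factorization ``$\cF^G\to\cF\to\cF_e=\cC$ whose left adjoint on algebras is $I$,'' and this is not right as stated: there is no tensor projection $\cF\to\cF_e$ (only the inclusion $\cF_e\hookrightarrow\cF$), and $I$ is the \emph{right} adjoint of the forgetful functor $\cC^G\to\cC$, not a left adjoint of that composite. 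The correct conceptual mechanism is that $\Rep(\cB)\cong(\Rep(\cB^G))^\loc_{\cO(G)}$, $\cO(G)\subset\tilde A$ is a subalgebra in $\Rep(\cB^G)$ corresponding to the intermediate net $\cB$, and the de-equivariantized image of $\tilde A$ over $\cO(G)$ is $A$ -- so $\tilde A$ is the image of $A$ under the right adjoint $I$. Even so, this requires a real argument (as you admit), and the paper's route of defining $\tilde\alpha$ explicitly and checking the commutativity avoids having to prove the algebra isomorphism abstractly. Apart from that missing step (and the incorrect factorization), your account of the ``only if'' direction via $G$-stability and of the splitting bijection is in line with the paper's proof.
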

\begin{proof}
Let us fix an interval $I\in\cI$ and
let us denote 
$\iota_\cA:\cB(I)\hookrightarrow\cA(I)$ the canonical inclusion map.
  Let us assume an extension of $G$ to $\cA$ exists. 
  Then $\iota_\cA\circ\alpha=s(\alpha)\circ\iota_\cA$ for any $\alpha\in G$ which implies that 
  the action of $G$ stabilizes the algebra $A\cong\bar\iota_\cA\otimes\iota_\cA$.
  
%Let $\tilde A$ be the algebra in $\Rep^I(\cB^G)$ corresponding to $\cB^G\subset \cA$.
%We construct a map from 
%$\tilde\beta:\Aut(\cA|\cB^G) \to \Aut_{\cC^G}(I(A))$.
%By \cite{MR2183964} we have an 
%equivalence 
%$E:\Rep^I(\cB^G)\to (G\mathrm{-}\Rep^I(\cB))^G$ 
%given by \nn{todo}
%with $E(\tilde A)\in\cC^G$. 
%Let $a:E(A)\to \tilde I(A)$ an algebra isomorphism \nn{specifiy}.
%For $\alpha\in\Aut(\cA|\cB^G)$ we define $\tilde\beta(\alpha)=a\circ T(\beta(\alpha))\circ %a^{-1}$ with 
%$\beta:\Aut(\cA|\cB^G)\to \Aut_{\Rep^I(\cB^G)}(\tilde A)$ from 
%Lemma \ref{lem:AutomorphismsOfCNets}.

Recall that $\Aut(\cB|\cB^G)=\{\alpha_g:g\in G\}$ and that the associated 
categorical action on $\cC=\Rep^I(\cB)$ is given by $g(\tau)={}^{\alpha_g}\tau =\alpha_g\otimes \tau\otimes\alpha_g$ for $\tau\in\Rep^I(\cB)$, where the tensor product correspond to composition of endomorphisms of $\cB(I)$.

To prove the second claim, we show that the following diagram commutes:
  $$
\begin{tikzcd}
1 \arrow[r] & \Aut(\cA|\cB) \arrow[r, hook,] \arrow[d,->,"\beta",shift left] & \Aut(\cA|\cB^G) \arrow[r, "\res_{\cB}"] \arrow[d,<-,"\tilde\alpha"] & \Aut(\cB|\cB^G) \arrow[r]  \arrow[d,equal] & 1 \\
1 \arrow[r]  & \Aut_{\cC}(A) \arrow[r,"\iota"] \arrow[u,"\alpha",shift left]& \Aut_{\cC^G}(I(A)) \arrow[r,"\pi"] & G \arrow[r]  & 1.
\end{tikzcd}
  $$
  %We can canonical identify $\Aut(\cA|\cB)\cong \Aut(A)$ and $\Aut(\cA|\cB^G)\cong \Aut(I(A))$.

Recall from Definition \ref{defn:AlgebraI(A)} that $I(A)=\bigoplus_g g(A)$. 
Let $\tilde A$ be the algebra in $\Rep^I(\cB^G)$ corresponding to $\cB^G\subset \cA$.
Then $\tilde A$ is of the form $\bar\iota_{\cB} \otimes \bar\iota_\cA\otimes \iota_{\cA}\otimes \iota_{\cB}$
where $\iota_{\cB}:\cB^G(I)\hookrightarrow \cB(I)$ is the inclusion map. 
%$\iota_\cB:\cB(I)\hookrightarrow\cA(I)$ are the canonical inclusion maps.
Let $f\in \Aut_{\cC}(I(A))$.

We obtain a map $\tilde f:\tilde A \to \tilde A$ as follows. 
As in Lemma \ref{lem:UniqueNonzeroComponent} (2) there is a unique $g$, such that $f_{e,g}\neq 0$ and $f:g(A)\to A$.
Note that $\cB^G(I)\subset \cB(I)$ being a depth two inclusion implies that $\Hom(\alpha_g\otimes \iota_{\cB^G},\alpha_g\otimes \iota_{\cB^G})\cong \bbC$. Therefore
$\bar\iota_{\cB}\otimes f_{e,g}\otimes \iota_{\cB}:
\bar\iota_\cB\otimes \alpha_g\otimes \bar\iota_\cA\otimes\iota_\cA
\otimes \alpha_{g}^{-1}\otimes \iota_{\cB}\to\tilde A$ gives a map $\tilde f:\tilde A\to\tilde A$ defined up to a scalar which can be fixed by demanding that 
$(\bar \iota_\cB \otimes \ev_{\iota_\cA} \otimes \iota_{\cB})\circ \tilde f$
is an automorphism of the algebra associated with $\bar\iota_\cB\otimes \iota_\cB$.
%\in \Aut_\cC(A)$. %= \beta(g):A\to A$
%with $\beta:\Aut(\cA|\cB) \to \Aut_\cC(A)$ from Lemma \ref{lem:AutomorphismsOfCNets}.
We define $\tilde\alpha(f) = \alpha(\tilde f)$ with $\alpha:\Aut_\cC(A)\to\Aut(\cA|\cB)$ from Lemma \ref{lem:AutomorphismsOfCNets}.
Then $\tilde\alpha(\iota(f))$ equals $\alpha(f)\in\Aut(\cA|\cB^G)$ for all $f\in \Aut_{\cC}(A)$. Thus the first square commutes.

Finally, recall that $\pi(f)=g$ with $g\in G$ the unique element such that $f_{e,g}\neq 0$. 
We claim that $\res_\cB(\tilde\alpha(f))=\alpha_g$. 
Namely, let $b=\iota_\cB(c)v_\cB\in \cB(I)$, where 
$v_{(-)}=\coev_{\iota_{(-)}}\otimes \id_{\iota_{(-)}}$, then 
\begin{align*}
  \alpha(\tilde f)(\iota_\cA(b))&=
  \alpha(\tilde f)(\iota_\cA(\iota_\cB(c)v_\cB)
%\\&
=
\alpha(\tilde f)(\iota_\cA(\iota_\cB(c)\bar\iota_\cB(\ev_{\iota_\cA})v_\cB)v_\cA)
\\&=
\iota_\cA(\iota_\cB(c)\bar\iota_\cB(\ev_{\iota_\cB})\tilde f^{-1}v_\cB)v_\cA)
%\\&
=\iota_\cA(\alpha_g(b)).
\end{align*}
Hence the second square commutes, and we are finished.
\end{proof}

\bibliographystyle{amsalpha}
{\footnotesize{
\bibliography{bibliography}
}}
\end{document}